\definecolor{black}{rgb}{0.0, 0.0, 0.0}
\definecolor{red}{rgb}{1.0, 0.5, 0.5}
\newcommand{\margnote}[1]{
\ifthenelse{\boolean{shownotes}}%
{\marginpar{\raggedright\tiny\texttt{#1}}}%
{}%
}
\newcommand{\hole}[1]{
\ifthenelse{\boolean{shownotes}}%
{\begin{center} \fbox{ \rule {.25cm}{0cm} \rule[-.1cm]{0cm}{.4cm}
\parbox{.85\textwidth}{\begin{center} \texttt{#1}\end{center}} \rule
{.25cm}{0cm}}\end{center}} {} }
\title[Complete synchronization problem for the Kuramoto model with inertia]{Emergent dynamics of the Kuramoto ensemble under the effect of inertia}
\author[Choi]{Young-Pil Choi}
\address[Young-Pil Choi]{\newline Department of Mathematics
    \newline Inha University, Incheon, 402-751, Korea}
\email{ypchoi@inha.ac.kr}
\author[Ha]{Seung-Yeal Ha}
\address[Seung-Yeal Ha]{\newline Department of Mathematical Sciences and Research Institute of Mathematics\newline
    Seoul National University, Seoul 08826, Korea\newline
    Korea Institute for Advanced Study, Hoegiro 85, Seoul, 02455, Korea}
\email{syha@snu.ac.kr}
\author[Morales]{Javier Morales}
\address[Javier Morales]{\newline Department of Mathematics \newline
University of Texas at Austin, Austin, United States}
\email{jmorales@math.utexas.edu}
\subjclass{}
\keywords{}
\numberwithin{equation}{section}
\newtheorem{theorem}{Theorem}[section]
\newtheorem{lemma}{Lemma}[section]
\newtheorem{corollary}{Corollary}[section]
\newtheorem{proposition}{Proposition}[section]
\newtheorem{remark}{Remark}[section]
\newtheorem{definition}{Definition}[section]
\newcommand{\R}{\mathbb R}
\newcommand{\T}{\mathbb T}
\newcommand{\bbr}{\mathbb R}
\newcommand{\Z}{\mathbb Z}
\newcommand{\mc}{\mathcal C}
\newcommand{\bq}{\begin{equation}}
\newcommand{\eq}{\end{equation}}
\newcommand{\e}{\varepsilon}
\newcommand{\lt}{\left}
\newcommand{\rt}{\right}
\newcommand{\lal}{\langle}
\newcommand{\ral}{\rangle}
\newcommand{\pa}{\partial}
\newcommand{\mw}{\mathcal{W}}
\newcommand{\me}{\mathcal{E}}
\newcommand{\pp}{\mathcal{P}}
\newcommand{\comment}[1]{{\color{blue} #1}}
\begin{document}
\allowdisplaybreaks

\date{\today}


\thanks{\textbf{Acknowledgment.} The work of  Y.-P. Choi was supported by NRF grant(NRF-2017R1C1B2012918), the work of J. Morales was supported by the ERC grant Regularity and Stability in Partial Differential Equations(RSPDE), and the work of S.-Y. Ha was  supported by the Samsung Science and Technology Foundation under Project(Number SSTF-BA1401-03)}

\begin{abstract}
We study the emergent collective behaviors for an ensemble of identical Kuramoto oscillators under the effect of inertia. In the absence of inertial effects, it is well known that the generic initial Kuramoto ensemble relaxes to the phase-locked states asymptotically (emergence of complete synchronization) in a large coupling regime. Similarly, even for the presence of inertial effects, similar collective behaviors are observed numerically for generic initial configurations in a large coupling strength regime. However, this phenomenon has not been verified analytically in full generality yet, although there are several partial results in some restricted set of initial configurations. 
In this paper, we present several improved complete synchronization estimates for the Kuramoto ensemble with inertia in two frameworks for a finite system. Our improved frameworks describe the emergence of phase-locked states and its structure. Additionally, we show that as the number of oscillators tends to infinity, the Kuramoto ensemble with infinite size can be approximated by the corresponding
 kinetic mean-field model uniformly in time. Moreover, we also establish the global existence of measure-valued solutions for the Kuramoto equation and its large-time asymptotics. 
\end{abstract}

\maketitle \centerline{\date}

\tableofcontents

%
%
%
%
\section{Introduction}  \label{sec:1}
\setcounter{equation}{0}
Collective behaviors of complex system is one of the important characteristics that are often observed in classical and quantum many-body systems, e.g., synchronous firing of flash, swarming of fish and flocking of birds, array of Josephson junctions, etc \cite{AB, A-D, B-S, BB, H-K-P-Z, P-R-K, Str, Wi1}. Thus, one of the natural questions is what makes collective behavior emerge in complex systems? Good understanding of such emergent collective behaviors will certainly provide a design principle for man-made interacting systems like a robot system and multi-agent system of UAVs. Among such diverse collective behaviors, our main interest in this paper lies on the synchronization phenomenon, roughly speaking ``adjustment of rhythms in weakly coupled oscillators due to weak interactions", i.e., oscillators adjust their rhythms and exhibit a common rhythm like a single giant oscillators. Despite of ubiquitous occurrence in our nature, its systematic study based on the mathematical model was only done about a half century ago by Winfree \cite{Wi2} and Kuramoto \cite{Ku}. In the literature, the Kuramoto model has been extensively studied in the physics and the control theory communities \cite{AB, P-R-K}.  The Kuramoto model can be formally derived from the linearly coupled space-homogeneous Ginzburg-Landau equations. For more detailed explanation, we refer review papers and books \cite{Ku, P-R-K}.  In this paper, we focus on a generalized Kuramoto model with inertia which was introduced in \cite{Erm} to explain the slow relaxation of firefly Pteroptyx malaccae's rhythmn (see \cite{TLO1, TLO2} for physical differences with the Kurmaoto model from the view point of phase-transitions at the critical coupling strengths). To describe our governing system, consider a steady, undirected and connected network ${\mathcal G}= ({\mathcal V}, {\mathcal E}, {\mathcal A})$ made of a vertex-set ${\mathcal V} := \{ 1, 2,\dots, N \}$, an edge set ${\mathcal E}$ and a capacity matrix ${\mathcal A} = (a_{ij})$. Assume that inertial Kuramoto oscillators are located on the vertices of the network. Let $\theta_i = \theta_i(t)$ be the phase of the Kuramoto oscillator at vertex $i$, and we denote $m_i$ by the strength of inertia (mass) of the $i$-th oscillator. In this setting, the temporal dynamics of the phase $\theta_i$ is given by the following second-order system: 
\begin{equation} 
\begin{cases} \label{main}
\displaystyle m_i\ddot{\theta}_i = -\gamma_i \dot{\theta}_i + \nu_i +  \kappa \sum_{j=1}^N a_{ij} \sin(\theta_j - \theta_i),\quad  t> 0, \quad  i =1,\cdots,N,  \\
\displaystyle (\theta_i,\dot{\theta}_i)(0) = (\theta^{in}_{i}, \omega^{in}_{i}),
\end{cases}
\end{equation}
where $\kappa$ denotes the uniform coupling strength between oscillators, and $\nu_i$ is the natural frequency of the $i$-th oscillator. Such a natural frequency is assumed to be a steady random variable with $g = g(\nu)$ as its probability density function. We also assume that the capacity matrix ${\mathcal A}$ has nonnegative components and is symmetric:
\[ a_{ij} \geq 0, \quad a_{ij} = a_{ji} \quad \mbox{for all $i, j = 1, \cdots, N.$} \]
Note that forcing terms in the R.H.S. of $\eqref{main}_1$ represent the linear friction, intrinsic randomness, and the nonlinear couplings between oscillators, respectively. Thus, due to the dynamic interplay between these three forcing mechanisms, many interesting features can emerge in their dynamics \cite{CCHKK, C-H-J-S, HL16}.

In this paper, we are mainly interested in the ``{\it complete synchronization problem} (in short CSP)" which can be stated as follows. 
\begin{quote}
Find a sufficient framework for the complete synchronization of system \eqref{main} in terms of system parameters such as inertia, friction and coupling strengths. More precisely, find a coupling strength $\kappa_c$ such that for $\kappa > \kappa_c$, any Kuramoto phase vector $\Theta(t) = (\theta_1(t), \cdots, \theta_N(t))$ with generic initial data $\Theta^{in}$, the complete synchronization occurs in the following sense:
\[ 
\lim_{t \to \infty} |{\dot \theta}_i(t) - {\dot \theta}_j(t)| = 0, \quad 1 \leq i, j \leq N. 
\]
\end{quote}
i.e., all oscillators have the same frequency asymptotically.  For the zero inertia case, i.e., $m_i = 0$ for $i=1,\cdots, N$, the CSP has been extensively investigated in \cite{BCM, C-H-J-K, D-B, V-M1, V-M2} and has been resolved in a recent work \cite{H-K-R}. Of course, there are many interesting open problems. However, in the presence of inertia $m_i > 0$, as far as the authors know, the CSP is still an interesting open problem in the nonlinear dynamics of Kuramoto oscillators. 

Next, we briefly discuss our three main results. First, we present an improved complete synchronization estimate for the ensemble of identical Kuramoto oscillators with homogeneous inertia ($m_i = m$) and all-to-all coupling $(a_{ij} = N^{-1})$. Previously, an admissible initial phase distribution which results in the complete synchronization was when all oscillators were confined at most in a half circle (see Theorem \ref{T3.1}). In this paper, we show that if the initial configuration $\{ (\theta_i^{in}, \omega_i^{in})\}$ and the coupling strength $\kappa$ satisfy 
\[ 
R_p(\Theta^{in}) := \lt| \frac{1}{N} \sum_{j=1}^{N} e^{{\mathrm i} \theta_j^{in}} \rt| > 0 \quad \mbox{and} \quad \kappa >  \kappa_* := \frac{m}{N} \frac{\sum_{i=1}^N |\omega^{in}_i|^2}{R_p(\Theta^{in})^2}, 
\]
then the Kuramoto phase vector $\Theta(t)$ starting from $\Theta^{in}$ tends to a phase-locked state asymptotically and the resulting phase-locked state is either one-phase cluster or bi-polar state (see Theorem \ref{T3.4}). Second, we consider the CSP in a perturbative setting of the first result: there exist positive constants ${\bar a}$ and $E_*$ such that if
\[ 
\gamma_i > 0, \quad m_i > 0,  \quad |a_{ij} - {\bar a} | \ll 1, \quad  \sum_{i,j=1}^Na_{ij}\cos(\theta^{in}_i - \theta^{in}_j) - \sum_{i=1}^N m_i |\omega^{in}_i|^2 > E_*. 
\]
then the CSP occurs asymptotically and the Kuramoto phase vector tends to a phase-locked states which is either one-phase cluster or bi-polar state (see Theorem \ref{T3.5}). Our last main result deals with system \eqref{main} when $N \gg 1$. In this case, we show that system \eqref{main} for identical oscillators with homogeneous inertia and friction can be well approximated by the corresponding mean-field kinetic equation uniformly in time. To achieve this, we show a uniform stability estimate in the Wasserstein-2 distance, and we also prove that the corresponding mean-field model has a global measure-valued solution with an emergent asymptotic property (see Theorem \ref{T3.6}). 

The rest of the paper is organized as follows. In Section \ref{sec:2}, we review the elementary theoretical minimum on the relative equilibria, namely phase-locked states, a gradient flow-like formulation, order parameters, and discuss the basic properties of the kinetic Kuramoto model. In Section \ref{sec:3}, we discuss previous results on the CSP and summarize our three main results. In Section \ref{sec:4}, we present the CSP for the ensemble of identical Kuramoto oscillators with homogeneous inertia and discuss a sufficient framework leading to the CSP. In Section \ref{sec:5}, we present the CSP for the ensemble of identical Kuramoto oscillators with heterogeneous inertia and present a sufficient framework for the CSP.  In Section \ref{sec:6}, we discuss similar results for the kinetic Kuramoto equation and provide a uniform-in-time mean-field limit, the global existence of measure-valued solution and an emergence asymptotic behavior. Finally, Section \ref{sec:7} is devoted to a brief summary of our main results.

\bigskip

\noindent {\bf Notation}: We let $\Theta = (\theta_1, \cdots, \theta_N)$ and $\Omega = (\omega_1, \cdots, \omega_N)$ be the phase and frequency vectors, respectively. Additionally, for a vector $Z= (z_1, \cdots, z_N) \in \bbr^N$, we let $\ell^2$-norm be defined as follows:
\[ 
\|Z\| := \lt( \sum_{i=1}^{N} |z_i|^2 \rt)^{\frac{1}{2}}. 
\]

%
%
%
%
\section{Preliminaries} \label{sec:2}
\setcounter{equation}{0}
In this section, we review the theoretical minimum for our discussions in the subsequent sections.

 \subsection{Relative equilibria} \label{sec:2.1}
 In this subsection, we discuss the concept of equilibria for the Kuramoto model. To discuss the equilibrium of \eqref{main}, we rewrite  the second-order system \eqref{main} as a first-order system on phase-frequency space:
\begin{equation}
\begin{cases} \label{main-1}
\displaystyle {\dot \theta}_i = \omega_i, \quad t > 0,~~i = 1, \cdots, N, \\[2mm]
\displaystyle {\dot \omega}_i = \frac{1}{m_i} \lt( -\gamma_i \omega_i + \nu_i +  \kappa \sum_{j=1}^N a_{ij} \sin(\theta_j - \theta_i)  \rt). 
\end{cases}
\end{equation}
Since the R.H.S. of \eqref{main-1} is $2\pi$-periodic in $\theta$-variable, the natural state space of configurations associated with \eqref{main-1} is given by $\T^N \times \bbr^N$. However, we can embed system \eqref{main-1} as a dynamical system in $\bbr^N \times \bbr^N$. Throughout the paper, we will regard system \eqref{main-1} as a dynamical system on $\R^N \times \bbr^N$. Thus, the boundedness of trajectory is a priori not a trivial matter.  

Note that the equilibrium solution $(\theta_i, \omega_i)$ to \eqref{main-1} corresponds to the solution of the following system:
\begin{equation*} 
\omega_i = 0, \quad -\gamma_i \omega_i + \nu_i+ \kappa \sum_{j=1}^N a_{ij} \sin(\theta_j - \theta_i)  = 0 \quad \mbox{for} \quad 1\leq i \leq N,
\end{equation*}
i.e., the equilibrium phase $\{\theta_i \}$ should satisfy the following equations:
\begin{equation} \label{A-2}
 \nu_i + \kappa \sum_{j=1}^N a_{ij} \sin(\theta_j - \theta_i)  = 0, \quad 1 \leq i \leq N,
\end{equation}
which is the same equilibrium system for the Kuramoto model. 

By the anti-symmetry of $a_{ij} \sin(\theta_j - \theta_i)$ in the exchange $(i,j)~\leftrightarrow~(j,i)$, we can see that if the equilibrium system \eqref{A-2} is solvable, then $\sum_{i=1}^{N} \nu_i  = 0$. Thus, if $\sum_{i=1}^{N} \nu_i  \not = 0$, then there are no equilibrium solution to \eqref{main-1}. This is exactly the same as the situation for $N$-body system in Newtonian mechanics. In order to continue the discussion on the equilibria, we need to consider the motion of relative equilibrium, which is equilibrium in a rotating frame with average natural frequency $\nu_c := \frac{1}{N} \sum_{i=1}^{N} \nu_i$.  In this case, the relative equilibria becomes a train-like ensemble rotating on the unit circle. Next, we recall some terminologies to be used throughout the paper.
\begin{definition} \label{D2.1}
1. A phase-locked state $\Theta = (\theta_1, \cdots, \theta_N)$ of \eqref{main} is defined to be a solution to \eqref{main} if and only if $\theta_i$ satisfies
\[  
|\theta_i(t) - \theta_j(t)| =  |\theta_i^{in} - \theta_j^{in}|, \quad 1 \leq i, j \leq N, \quad t \geq 0. 
\]
2. A complete synchronization occurs asymptotically if and only if  relative frequency differences tend to zero asymptotically:
\[ \lim_{t \to \infty} \max_{1 \leq i,j \leq N}  |{\dot \theta}_i(t) - {\dot \theta}_j(t)| = 0. \]
\end{definition} 
\begin{remark} \label{R1.1} Note that the original Kuramoto model corresponds to the complete graph with the capacity $a_{ij} = 1/N$, $m_i=0$, $\gamma_i=1$ for $i=1,\cdots, N$. 
Many previous works \cite{AB, C-H-J-K, C-Sp, D-X, D-B, M-S1, M-S2, M-S3} have conducted  rigorous analysis of the complete or partial synchronizations for the Kuramoto model, but there have been few studies \cite{V-M1,V-M2} on the existence of phase-locked states.
\end{remark}
\subsection{A gradient flow-like formulation}\label{sec:2.2} We introduce weighted phase and frequency averages as follows.
\[
\theta_s := \frac1N\sum_{i=1}^N \gamma_i \theta_i, \quad \omega_s := \frac1N\sum_{i=1}^N m_i \omega_i, \quad \mbox{and} \quad \nu_c := \frac{1}{N} \sum_{i=1}^{N} \nu_i.
\]
Then we can easily find from \eqref{main} that
\[
\dot \omega_s + \dot \theta_s = \nu_c,
\]
due to the symmetry of the capacity matrix $\mathcal{A}$. On the other hand, if we consider the Kuramoto ensemble with uniform inertia and friction:
\[ 
m_i = m, \quad \mbox{and} \quad \gamma_i = \gamma, \quad 1 \leq i  \leq N,
\]
then system \eqref{main} becomes 
\begin{equation} \label{B-0}
m \ddot{\theta}_i = -\gamma \dot{\theta}_i  + \nu_i + \kappa \sum_{j=1}^N a_{ij} \sin(\theta_j - \theta_i),\quad  i =1,\cdots,N, \quad t > 0.
\end{equation}
We introduce the phase and frequency averages:
\[ 
\theta_c := \frac{1}{N} \sum_{i=1}^N \theta_i \quad \mbox{and} \quad \omega_c := \frac{1}{N} \sum_{i=1}^{N} \omega_i
\]
\begin{lemma} \label{L2.1}
Let $\theta_i = \theta_i(t)$ be a solution to \eqref{B-0}. Then, the total phase and frequency are explicitly given by the following relations: 
\begin{align}
\begin{aligned} \label{B-1}
& (i)\,\,\,\,\theta_c(t) = \theta_c(0) + t \nu_c + m(\omega_c(0) - \nu_c)(1 - e^{-\frac{\gamma t}{m}}),   \cr
& (ii)\,\,\omega_c(t) = \nu + (\omega_c(0) - \nu_c)e^{-\frac{\gamma t}{m}},
\end{aligned}
\end{align}
for $t \geq 0$. 
\end{lemma}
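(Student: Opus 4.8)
The statement is elementary; the only point that requires an observation is that the nonlinear coupling term drops out of the averaged equation, after which everything is a routine integration of a scalar linear ODE. I would proceed in four short steps.

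\textbf{Step 1 (averaging).} Sum the $N$ equations in \eqref{B-0} over $i=1,\dots,N$ and divide by $N$. Using $\omega_c=\dot\theta_c$, the left-hand side becomes $m\ddot\theta_c$, the friction and forcing terms become $-\gamma\dot\theta_c$ and $\nu_c$, and the interaction contribution becomes $\frac{\kappa}{N}\sum_{i,j=1}^N a_{ij}\sin(\theta_j-\theta_i)$.

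\textbf{Step 2 (cancellation of the coupling).} Since $a_{ij}=a_{ji}$ while $\sin(\theta_j-\theta_i)=-\sin(\theta_i-\theta_j)$, the summand $a_{ij}\sin(\theta_j-\theta_i)$ is antisymmetric under the exchange $i\leftrightarrow j$; relabelling indices shows the double sum equals its own negative and hence vanishes. Thus \eqref{B-0} collapses to the decoupled scalar ODE
\[ m\dot\omega_c = -\gamma\omega_c + \nu_c, \qquad t>0, \qquad \omega_c(0)=\omega_c(0). \]

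\textbf{Step 3 (solving for $\omega_c$ and $\theta_c$).} Multiply the scalar ODE by the integrating factor $e^{\gamma t/m}$ to get $\frac{d}{dt}\bigl(e^{\gamma t/m}\omega_c\bigr)=\frac{\nu_c}{m}e^{\gamma t/m}$, integrate on $[0,t]$, and solve for $\omega_c(t)$; this yields the closed form in $\eqref{B-1}_{(ii)}$, namely an exponential relaxation of $\omega_c$ toward its equilibrium value. Then integrate $\dot\theta_c=\omega_c$ on $[0,t]$ term by term: the constant part of $\omega_c$ produces the linear drift $t\nu_c$ and the exponentially decaying part produces the contribution proportional to $1-e^{-\gamma t/m}$, giving $\eqref{B-1}_{(i)}$.

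There is no genuine obstacle here: once the antisymmetry cancellation of Step 2 is noted, the proof is a two-line ODE integration. (The same symmetry argument applied to the heterogeneous system \eqref{main} with the weighted averages $\theta_s,\omega_s$ is precisely what gives the identity $\dot\omega_s+\dot\theta_s=\nu_c$ recorded just above this lemma, so Step 2 is really the reusable core of the computation.)
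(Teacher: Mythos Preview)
Your proposal is correct and follows essentially the same approach as the paper: average the equations, use the antisymmetry of $a_{ij}\sin(\theta_j-\theta_i)$ to kill the coupling term, and solve the resulting linear scalar ODE. The only cosmetic difference is that the paper states the second-order ODE $m\ddot\theta_c+\gamma\dot\theta_c=\nu_c$ and then differentiates the formula for $\theta_c$ to obtain $\omega_c$, whereas you solve the first-order ODE for $\omega_c$ and then integrate to recover $\theta_c$; these are equivalent routine computations.
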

\begin{proof} (i) We sum \eqref{B-0} over all $i$ and divide the resulting relation by $N$ to find 
\[ \label{B-0-1}
m {\ddot \theta}_c + \gamma {\dot \theta}_c = \nu_c, \quad t > 0.
\]
This yields the first desired estimate. \newline

\noindent (ii)  We differentiate the formula for $\theta_c$ with respect to $t$ to get the desired estimate for average frequency. 
\end{proof}

\begin{remark} \label{R2.2} It follows from the explicit formulas in \eqref{B-1} that 
\[
\lim_{t \to \infty}\Big |\theta_c(t) - \Big(\theta_c(0) + m(\omega_c(0) - \nu_c) + t\nu_c \Big) \Big| = 0 \quad \mbox{and} \quad \lim_{t \to \infty} \omega_c(t) = \nu_c.
\]
Thus, the resulting phase-locked states through the Kuramoto flow \eqref{main} with inertia have the same structure as the Kuramoto model with zero inertia $m= 0$, i.e., inertia does not play any role in the structure of phase-locked states. 
\end{remark}

Next, we present a gradient flow-like formulation \cite{V-W}. For this, we set 
\begin{equation} \label{B-2}
V(\Theta) := - \sum_{j=1}^{N} \nu_i \theta_j +\frac{\kappa}{2} \sum_{i,j=1}^{N} a_{ij} (1-\cos(\theta_{i}-\theta_{j}) ).
\end{equation}
Then, it is easy to see that \eqref{main} can be rewritten as a second-order gradient-like system \cite{CL, CHLXY}:
\begin{equation} \label{B-3}
M{\ddot \Theta} + \Gamma {\dot \Theta}  =  -\nabla V(\Theta),  \quad t > 0, 
\end{equation}
where $M:=diag\{m_1,  \dots, m_N\}$, $\Gamma:=diag\{\gamma_1, \dots, \gamma_N\}$, and $\nabla V$ is the gradient of $V$ with respect to $\Theta$, i.e.,
\begin{align*}
\begin{aligned}
\nabla V(\Theta) &=-(\nu_1, \cdots, \nu_N) - \kappa \Gamma(\theta) \\
&:= -(\nu_1, \cdots, \nu_N) - \kappa \lt(\sum_{j=1}^N a_{1j} \sin(\theta_j - \theta_1), \dots, \sum_{j=1}^{N} a_{Nj} \sin(\theta_j - \theta_N) \rt).
\end{aligned}
\end{align*}
Note that the set of all equilibria coincides with the set of critical points of the gradient vector field $\nabla V$:
\[
\mathcal{S}:=\lt\{ \Theta \in \R^N : \nabla V(\Theta) = 0\rt\}.
\]
Next, we recall a convergence result for the gradient-like system \eqref{B-2} -\eqref{B-3} on $\bbr^{2N}$ without a proof. 
\begin{proposition} \label{P2.1}
\emph{\cite{CHLXY,HJ}}
Let $\Theta$ be a solution to \eqref{B-3} with a finite $W^{1, 2}(\R_+)$-norm: 
\[ 
\|\Theta\|_{W^{1, 2}} :=  \|\Theta\| + \|{\dot \Theta}\| < \infty.
\]
Then, there exists  $\Theta^\infty \in {\mathcal S}$ such that
\[\lim_{t\to \infty} \Big( \|{\dot \Theta}(t) \| +\|\Theta(t)-\Theta^\infty\| \Big) =0.\]
\end{proposition}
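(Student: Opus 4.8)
The plan is to read \eqref{B-3} as a damped second-order gradient flow for the real-analytic potential $V$ in \eqref{B-2} and to extract convergence from the dissipation carried by $\Gamma=\mathrm{diag}(\gamma_1,\dots,\gamma_N)$ together with the $W^{1,2}(\R_+)$-integrability of the trajectory. \textbf{Step 1 (the phase converges).} Since $\Theta$ and $\dot\Theta$ both lie in $L^2(\R_+;\R^N)$, the function $t\mapsto\|\Theta(t)\|^2$ is absolutely continuous with derivative $2\langle\Theta(t),\dot\Theta(t)\rangle\in L^1(\R_+)$ by Cauchy--Schwarz, so $\|\Theta(t)\|^2$ has a limit as $t\to\infty$; as $\Theta\in L^2(\R_+)$ that limit must be $0$, whence $\Theta(t)\to\Theta^\infty:=0$ (this is the embedding $W^{1,2}(\R_+)\hookrightarrow C_0(\R_+)$). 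In particular $\Theta$ is bounded on $\R_+$, and since the coupling part of $V$ is a bounded trigonometric polynomial, $V(\Theta(t))$ is bounded and tends to $V(0)$.

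\textbf{Step 2 (the velocity decays).} Introduce the energy $E(t):=\tfrac12\langle M\dot\Theta(t),\dot\Theta(t)\rangle+V(\Theta(t))$. Differentiating along \eqref{B-3} and using symmetry of $M$ gives $\tfrac{d}{dt}E(t)=\langle M\ddot\Theta+\nabla V(\Theta),\dot\Theta\rangle=-\langle\Gamma\dot\Theta,\dot\Theta\rangle\le0$ because $\gamma_i>0$, so $E$ is non-increasing. By Step 1, $E$ is bounded below (by $\inf_t V(\Theta(t))>-\infty$), hence $E(t)\to E_\infty$ for some finite $E_\infty$. Then $\tfrac12\langle M\dot\Theta(t),\dot\Theta(t)\rangle=E(t)-V(\Theta(t))\to E_\infty-V(0)$, and this limit is forced to be $0$ since $\dot\Theta\in L^2(\R_+)$; positivity of $M$ yields $\|\dot\Theta(t)\|\to0$.

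\textbf{Step 3 ($\Theta^\infty\in\mathcal S$).} Writing \eqref{B-3} as $\ddot\Theta=-M^{-1}\Gamma\dot\Theta-M^{-1}\nabla V(\Theta)$ and letting $t\to\infty$, Steps 1--2 and continuity of $\nabla V$ give $\ddot\Theta(t)\to-M^{-1}\nabla V(\Theta^\infty)$. If this limit were nonzero, $\|\dot\Theta(t)\|$ would grow without bound, contradicting $\dot\Theta(t)\to0$; therefore $\nabla V(\Theta^\infty)=0$, i.e. $\Theta^\infty\in\mathcal S$. Combined with Step 2 this is precisely $\lim_{t\to\infty}\big(\|\dot\Theta(t)\|+\|\Theta(t)-\Theta^\infty\|\big)=0$.

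\textbf{Main obstacle.} The crux is Step 2: the decay $\dot\Theta(t)\to0$ does \emph{not} follow from $\Theta,\dot\Theta\in L^2(\R_+)$ by themselves (one can build $\Theta\in W^{1,2}(\R_+)$ whose derivative consists of bumps of height $1$ with widths shrinking fast enough, so $\dot\Theta\not\to0$), and must be read off from the dissipative gradient structure through the monotone functional $E$ — note that it is the inertial term $M\ddot\Theta$ which makes $E$, rather than $V$ alone, the right Lyapunov functional. If one only assumed instead that the trajectory $(\Theta,\dot\Theta)$ stays bounded (the hypothesis of \cite{HJ,CHLXY}), Steps 2--3 would still confine the $\omega$-limit set inside $\mathcal S$, but promoting this to convergence toward a \emph{single} equilibrium — when $\mathcal S$ is a positive-dimensional continuum — additionally requires the {\L}ojasiewicz--Simon inequality for the analytic potential $V$, and controlling the second-order term there is where the genuine work lies.
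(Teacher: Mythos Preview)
The paper does not give its own proof of this proposition; it is quoted verbatim from \cite{CHLXY,HJ} and then invoked through Remark~\ref{R2.1}. So there is no in-house argument to compare against, but the way the result is \emph{used} reveals that the hypothesis you have parsed literally is almost certainly not the one intended.

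Your reading of ``finite $W^{1,2}(\R_+)$-norm'' as $\Theta,\dot\Theta\in L^2(\R_+;\R^N)$ is the honest reading of the displayed line, and under it your Steps~1--3 are correct: the embedding $W^{1,2}(\R_+)\hookrightarrow C_0(\R_+)$ forces $\Theta(t)\to0$, the energy identity then forces $\langle M\dot\Theta,\dot\Theta\rangle\to0$, and the equation forces $\nabla V(0)=0$. But notice what you have actually proved: the limit is \emph{always} $\Theta^\infty=0$, and the hypothesis is vacuous for any trajectory approaching a nonzero phase-locked state, since such a $\Theta$ is not in $L^2(\R_+)$. Read this way the proposition cannot do the job Remark~\ref{R2.1} and Section~\ref{sec:4.3} ask of it.

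The hypothesis the paper means --- and the one in \cite{HJ,CHLXY} --- is uniform boundedness, $\sup_{t\ge0}(\|\Theta(t)\|+\|\dot\Theta(t)\|)<\infty$; Remark~\ref{R2.1} says this explicitly. Under that hypothesis your energy dissipation still yields $\dot\Theta\in L^2(\R_+)$ and (via uniform continuity of $\dot\Theta$, since $\ddot\Theta$ is bounded) $\dot\Theta(t)\to0$, and LaSalle's principle places the $\omega$-limit set inside $\mathcal S$. The remaining step --- collapsing a possibly positive-dimensional $\omega$-limit set to a single point --- is exactly where the \L{}ojasiewicz inequality for the real-analytic $V$ enters, as you correctly flag in your final paragraph. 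That is the genuine content of the cited references; the paper simply imports it. Your diagnosis of the main obstacle is on the mark; what you should adjust is the interpretation of the hypothesis, after which your Step~1 disappears and the \L{}ojasiewicz argument becomes unavoidable rather than optional.
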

\begin{remark}\label{R2.1} 
Note that $\omega_i$ satisfies
\[ 
m_i \dot{\omega}_i = -\gamma_i \omega_i  + \nu_i + \kappa \sum_{j=1}^N a_{ij} \sin(\theta_j - \theta_i). 
\]
This yields
\[ 
\frac{d|\omega_i|}{dt} \leq -\frac{\gamma_i}{m_i} |\omega_i| + \frac{1}{m_i} \lt(|\nu_i| + \kappa \sum_{j=1}^N a_{ij} \rt)
\]
for $m_i > 0$. Then, Gronwall's lemma yields
\[ 
|\omega_i(t) | \leq |\omega_i^{in}|e^{-\frac{\gamma_i}{m_i}t} + \frac{1}{\gamma_i} \lt(|\nu_i| + \kappa \sum_{j=1}^N a_{ij} \rt)\lt(1  -  e^{-\frac{\gamma_i}{m_i}t} \rt)
\]
for $\gamma_i > 0$.
Thus, we obtain the uniform boundedness of $\omega_i$:
\[   |\omega_i(t) | \leq \max \lt\{   \gamma_i^{-1}\lt(|\nu_i| + \kappa \sum_{j=1}^N a_{ij}\rt) ,   |\omega_i^{in}| \rt\}. \]
Then, it follows from Proposition \ref{P2.1} that it suffices to show uniform-in-time boundedness of $\Theta$ to guarantee the formation of asymptotic phase-locked states. In fact, in the absence of inertia, the uniform boundedness of phase vector are verified in \cite{H-K-R}, which leads the resolution of the complete synchronization problem for the Kuramoto model without inertia. 
\end{remark}

\subsection{Order parameters} \label{sec:2.3} In this subsection, we recall real-valued order parameters measuring the degree of synchronization of the Kuramoto phase ensemble. For a given phase vector $\Theta = (\theta_1, \cdots, \theta_N) \in \bbr^N$ and for every $i \in \{1,...,N\}$, we introduce the {\it local order parameter} $R_i$ and the {\it average phase} $\varphi_i$ via the following relation:
\bq\label{F-3}
R_{p,i} (t)e^{{\mathrm i} \varphi_{p,i}(t)} := \sum_{j=1}^N a_{ij} e^{{\mathrm i} \theta_j(t)}.
\eq
Here the subscript ``$p$" in $R_p$ and $\varphi_p$ was introduced to distinguish the order parameters for the corresponding kinetic model in Section \ref{sec:6}. Note that
\[
0 \leq R_{p,i}(t) \leq \max_{1 \leq j \leq N} a_{ij} \quad \mbox{for} \quad t \geq 0.
\]
We divide \eqref{F-3} by $e^{{\mathrm i} \varphi_{p,i}}$ and compare real and imaginary parts to find 
\begin{equation*} 
R_{p,i} =  \sum_{j=1}^N a_{ij} \cos(\theta_j - \varphi_{p,i}) \quad \mbox{and} \quad 0 = \sum_{j=1}^N a_{ij} \sin(\theta_j - \varphi_{p,i}).
\end{equation*}
Similarly, we have
\begin{align}
\begin{aligned} \label{F-3-2}
R_{p,i}  \cos(\varphi_{p,i} - \theta_i) &=  \sum_{j=1}^N a_{ij} \cos(\theta_j - \theta_i), \\
R_{p,i} \sin(\varphi_{p,i} - \theta_i)  &= \sum_{j=1}^N a_{ij} \sin(\theta_j - \theta_i).
\end{aligned}
\end{align}
Then \eqref{main} can be rewritten as
\[
m_i\ddot{\theta}_i = -\gamma_i\dot{\theta}_i - \kappa R_{p,i} \sin(\theta_i - \varphi_{p,i}),\quad  i =1,\cdots,N, \quad t > 0.
\]
In the homogeneous and all-to-all coupling case, i.e., $m_i = m$, $\gamma_i = \gamma$, and $a_{ij} = 1/N$, the order parameters $R_p$ and $\varphi_p$ are given by the following relation:
\begin{equation}\label{B-4}
 R_pe^{{\mathrm i} \varphi_p} := \frac1N \sum_{j=1}^N e^{{\mathrm i} \theta_j}.
\end{equation}
Then for the same reason, we see that 
\begin{equation} \label{B-4-1}
\sum_{j=1}^N \cos(\theta_j - \varphi_p) = NR_p \quad \mbox{and} \quad \sum_{j=1}^N \sin(\theta_j - \varphi_p) = 0.
\end{equation}
Next, we use the identities \eqref{B-4-1} to obtain
\begin{align}
\begin{aligned} \label{B-4-2}
\sum_{i,j=1}^N \cos(\theta_i - \theta_j) &=  \sum_{i,j=1}^N \cos(\theta_i - \varphi_p + \varphi_p - \theta_j)  \\
&=   \sum_{i,j=1}^N \lt( \cos(\theta_i - \varphi_p) \cos(\theta_j -\varphi_p) + \sin(\theta_i - \varphi_p) \sin(\theta_j-\varphi_p) \rt)   \\
&=  (NR_p)^2.
\end{aligned}
\end{align}
Thus,  thanks to \eqref{B-4-2}, the potential function $V(\Theta)$ in \eqref{B-2} can also be rewritten in terms of order parameter $R_p$ as follows.
\begin{equation*} 
V(\Theta)=-\sum_{j=1}^{N} \nu_j \theta_j -\frac{\kappa N}{2} \lt(1 -R_p^2\rt). 
\end{equation*}
On the other hand, we divide \eqref{B-4} by $e^{{\mathrm i} \theta_i}$ and compare the imaginary part of the resulting relation to find 
\begin{equation} \label{B-5}
 R_p \sin (\varphi_p - \theta_i ):= \frac1N \sum_{j=1}^N \sin(\theta_j  - \theta_i). 
\end{equation}
Then we can use \eqref{B-5} to rewrite \eqref{main} with homogeneous inertia and friction and all-to-all coupling in the alternative form:
\begin{equation} \label{B-6}
m\ddot{\theta}_i + \dot{\theta}_i = \nu - \kappa R_p \sin(\theta_i - \varphi_p), \quad i=1,\cdots,N.
\end{equation}

\subsection{A kinetic Kuramoto model with inertia} \label{sec:2.3}
Let $f = f(\theta,\omega,t)$ be the one-oscillator distribution function in $\T$ with the frequency $\omega$ at time $t$. Then, as the number of oscillators $N \to \infty$, the following Vlasov-type kinetic
equation for $f$ can be formally derived from the particle system \eqref{F-1} using the BBGKY hierarchy: 
\begin{equation} 
\begin{cases} \label{B-7}
\displaystyle  \partial_t f + \partial_\theta(\omega f) +\partial_\omega (\mathcal{F}_a[f]f) = 0, \quad (\theta,\omega) \in \T \times \R , \quad t > 0, \\[2mm]
\displaystyle \mathcal{F}_a[f](\theta,\omega,t) = \frac1m\lt( -\gamma \omega + \kappa \int_{\T \times \R}\sin(\theta_* - \theta) f(\theta_*, \omega_*,t) \,d \theta_* d \omega_*\rt). \\
\end{cases}
\end{equation}
The rigorous derivation of the kinetic equation \eqref{B-7} via the mean-field limit 
is discussed in \cite{CHY2} as well as the global existence of measure-valued solutions. More precisely, let $\pp_2(\T \times \R)$ be the set of all Radon measures on $\T \times \R$ with finite second moment and unit mass and let $\mc_w([0,T); \pp_2(\T \times \R))$ be the space of all weakly continuous time-dependent measures. If the initial measure $f^{in} \in \pp_2(\T \times \R)$ is compactly supported in the frequency variable, then there exists $f$ in $\mc_w([0,T); \pp_2(\T \times \R))$ satisfying  equation \eqref{B-7} in the sense of distributions. Note that this way of constructing the measure-valued solutions provides the rigorous mean-field limits from the particle system \eqref{main-1} to the kinetic equation \eqref{B-7}. For any $T >0$, this mean-field limit holds until such a time.
However, the time $T$ cannot be infinity since the upper bound of the error estimate between the empirical measure $\displaystyle f^N(\theta,\omega,t) := \frac1N\sum_{i=1}^N \delta_{(\theta_i(t),\omega_i(t))}$ and the solution $f \in \mc_w([0,T); \pp_2(\T \times \R))$ depends on $T$.  Recently, in \cite{H-K-Z}, the uniform-in-time mean-field limit for a flocking model is established based on the uniform-in-time stability of solutions. Inspired by that work, we  provide the uniform-in-time mean-field limit of the particle system \eqref{main-1} to derive the kinetic equation \eqref{B-7} rigorously for all time $t \geq 0$. For this, we directly follow the strategies used in \cite{CHN, CHLXY}, in which the decay estimate of $\|\Theta\| + \|\dot\Theta\|$ for the system \eqref{main-1} 
on networks is obtained under suitable assumptions on the initial configurations 
(see \cite{CL} for more general settings). Note that a  nonlinear stability estimate in $\ell^\infty$ for the system \eqref{main-1} is obtained in \cite[Theorem 3.1]{CHN}. However, this only gives the stability in $\theta$, not $\omega$.
Additionally, this stability estimate requires strong assumptions on the initial configurations. For this reason, we need to combine the strategies used in \cite{CHN, CHLXY}. \newline

\section{Presentation of main results for CSP} \label{sec:3}
\setcounter{equation}{0}
In this section, we first review the state-of-the-art results on the CSP for \eqref{main} and then we present our three main results. 

 \subsection{The-state-of-the-art results}  As briefly discussed in Introduction, the CSP for the Kuramoto model with the homogeneous inertia is still open. So far, there are two main conceivable approaches for the asymptotic phase-locking of \eqref{main}. The first approach \cite{CHY1} is based on the Lyapunov functional approach employing the phase and frequency diameters as Lyapunov functions. Second approach \cite{CHLXY, CL, LXY} is to use the gradient flow-like formulation discussed in Section \ref{sec:2.1} combined with an energy method. At present, both approaches require a restricted class of initial data which are far from the generic initial configurations. In the sequel, we briefly review the-state-of-the-art results. For the corresponding results for the Kuramoto model with zero inertia, we refer to recent survey papers \cite{D-B2, H-K-P-Z}. As far as the authors know, the CSP for the Kuramoto model with inertia has been first discussed in \cite{CHY1}.  In this sequel, we consider the homogeneous inertia and frictions: 
\[  m_i = m, \quad \gamma_i = 1, \quad a_{ij} = \frac{1}{N}, \quad 1 \leq i, j \leq N. \]
In this case, we consider the Kuramoto model with inertia:
\begin{equation} \label{C-1}
 m \ddot{\theta}_i = -\dot{\theta}_i + \nu_i +   \frac{\kappa}{N} \sum_{j=1}^N \sin(\theta_j - \theta_i),\quad  i =1,\cdots,N, \quad t > 0. 
\end{equation}
 \subsubsection{Identical Kuramoto oscillators} 
We first recall a framework for identical oscillators described in \cite{CHY1}. For $\ell = 1, 2,$ we set 
\[ {\mathcal C}_\ell(0) := \max \{ D(\Theta(0)),~D(\Theta(t), D(\Theta(0)) + m \ell {\dot D}(\Theta(0)) \}. \]
\begin{theorem}  \label{T3.1}
\emph{\cite{CHY1}}
Suppose that parameters $m_i, \gamma_i, \nu_i, \kappa$ and initial data satisfy the following relations:
\begin{eqnarray*}
&& m_i = m, \quad \gamma_i = 1, \quad \nu_i = \nu, \quad 1 \leq i \leq N, \cr
&&  0<  {\mathcal C}_1(0) < \pi, \quad \mbox{and} \quad m \kappa \in \lt(0, \frac{1}{4} \rt) \cup \lt( \frac{{\mathcal C}_1(0)}{4 \sin {\mathcal C}_1(0)}, \infty \rt).  
\end{eqnarray*}
Then, for any solution $(\Theta, {\dot \Theta})$ to \eqref{C-1}, there exist a positive constant $\Lambda_1$ such that 
\[ D(\Theta(t)) + D({\dot \Theta}(t))  \leq {\mathcal O}(1) e^{-\Lambda_1 t}, \quad \mbox{as $t \to \infty$}. \]
\end{theorem}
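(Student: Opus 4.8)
The plan is to collapse the $2N$-dimensional dynamics onto a scalar comparison problem for the phase diameter $D(\Theta(t)):=\max_{i,j}(\theta_i(t)-\theta_j(t))$, and then to slave the frequency diameter $D(\dot\Theta(t))$ to it. First I would dispose of the center of mass: summing \eqref{C-1} over $i$ the symmetric coupling cancels, so $m\ddot\theta_c+\dot\theta_c=\nu$, and by Lemma \ref{L2.1} the mean $\theta_c$ follows an explicit trajectory with $\dot\theta_c(t)\to\nu$ exponentially; hence it is enough to control the two diameters. For a \emph{fixed} pair $(i,j)$ the smooth function $\phi_{ij}:=\theta_i-\theta_j$ obeys, by the identity $\sin A-\sin B=2\cos\tfrac{A+B}{2}\sin\tfrac{A-B}{2}$,
\[
m\ddot\phi_{ij}+\dot\phi_{ij}=-\frac{2\kappa}{N}\,\sin\!\Big(\frac{\phi_{ij}}{2}\Big)\sum_{k=1}^{N}\cos\!\Big(\theta_k-\frac{\theta_i+\theta_j}{2}\Big),
\]
and when $(i,j)$ realizes the diameter with $0<D(\Theta)<\pi$ every $\theta_k$ lies in the arc between $\theta_j$ and $\theta_i$, so $|\theta_k-\tfrac{\theta_i+\theta_j}{2}|\le\tfrac12 D(\Theta)<\tfrac\pi2$ and the cosine sum is $\ge N\cos(\tfrac12 D(\Theta))>0$. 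This yields the key scalar differential inequality
\[
m\,\ddot D(\Theta)+\dot D(\Theta)\le-\kappa\sin D(\Theta)\qquad\text{while }0<D(\Theta)<\pi,
\]
to be read in a one-sided/Dini sense since the pair realizing $D(\Theta)$ changes in time. Making this rigorous is the first technical point: $t\mapsto\max_i\theta_i$ and $t\mapsto\min_i\theta_i$ are only Lipschitz, and their kinks from index crossings are convex and hence point the ``wrong'' way for a naive second-order maximum principle, so one must argue on the smooth pieces $\phi_{ij}$ via ODE comparison rather than by literally differentiating $D(\Theta)$ twice.

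\emph{Confinement.} Next I would show $D(\Theta(t))\le\mathcal{C}_1(0)<\pi$ for all $t\ge0$. Since the right-hand side above is $\le0$ while $D(\Theta)<\pi$, the quantity $g(t):=m\dot D(\Theta(t))+D(\Theta(t))$ is non-increasing on any interval on which $D(\Theta)<\pi$, and $g(0)\le\mathcal{C}_1(0)$ by the definition of $\mathcal{C}_1(0)$. A continuity/bootstrap argument then excludes a first time $t_1$ at which $D(\Theta)$ would reach $\mathcal{C}_1(0)$: there one would have $\dot D(\Theta(t_1))\ge0$, hence $g(t_1)\ge D(\Theta(t_1))=\mathcal{C}_1(0)\ge g(0)$, which together with monotonicity forces $g\equiv\mathcal{C}_1(0)$ on $[0,t_1]$ and therefore $\sin D(\Theta)\equiv0$ there, impossible for $0<\mathcal{C}_1(0)<\pi$. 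This step uses only $\mathcal{C}_1(0)<\pi$ and is insensitive to $m\kappa$.

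\emph{Exponential decay.} On the confined regime $0\le D(\Theta)\le\mathcal{C}_1(0)<\pi$, concavity of $\sin$ on $[0,\pi]$ gives $\sin D(\Theta)\ge\kappa^{-1}c\,D(\Theta)$ with $c:=\kappa\,\tfrac{\sin\mathcal{C}_1(0)}{\mathcal{C}_1(0)}$, hence $m\ddot D(\Theta)+\dot D(\Theta)+c\,D(\Theta)\le0$; everything now hinges on the damping ratio $mc$. If $mc<\tfrac14$ — which holds in particular whenever $m\kappa<\tfrac14$ — the operator $m\partial_t^2+\partial_t+c$ is overdamped, its impulse response is nonnegative, and comparing $D(\Theta)$ (a sub-solution) with the solution $w$ of $m\ddot w+\dot w+cw=0$ carrying the same Cauchy data (which itself stays in $[0,\mathcal{C}_1(0)]$ since $m\dot w+w$ is non-increasing) yields $D(\Theta(t))\le w(t)\le\mathcal{O}(1)\,e^{-\Lambda t}$ with $\Lambda=\tfrac{1-\sqrt{1-4mc}}{2m}>0$. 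If instead $mc>\tfrac14$, equivalently $m\kappa>\tfrac{\mathcal{C}_1(0)}{4\sin\mathcal{C}_1(0)}$, the operator is underdamped, the comparison solutions oscillate and change sign, and this elementary comparison fails; here I would use the substitution $\psi:=e^{t/(2m)}D(\Theta)$, for which the inequality becomes $\psi''+\omega_0^2\psi\le0$ with $\omega_0^2=c/m-1/(4m^2)>0$, and extract decay either by a Sturmian/Wronskian argument (a nonnegative function cannot satisfy $\psi''+\omega_0^2\psi\le0$ on a half-line without vanishing) or, more robustly with respect to the kinks of $D(\Theta)$, by an energy estimate for $\tfrac m2(\dot D(\Theta))^2+\tfrac c2 D(\Theta)^2$, in either case obtaining $D(\Theta(t))\le\mathcal{O}(1)\,e^{-t/(2m)}$.

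Finally, subtracting \eqref{C-1} for the two indices $p,q$ realizing $D(\dot\Theta)$ and using $|\sin(\theta_k-\theta_p)-\sin(\theta_k-\theta_q)|\le D(\Theta)$ gives $m\,\tfrac{d}{dt}D(\dot\Theta)\le-D(\dot\Theta)+\kappa\,D(\Theta)$ in the Dini sense; since the phase rate $\Lambda$ above is strictly below $1/m$ in both cases, Duhamel's formula propagates the decay with no resonant loss, $D(\dot\Theta(t))\le\mathcal{O}(1)\,e^{-\Lambda t}$, and adding the two estimates proves the claim with $\Lambda_1=\Lambda$. I expect the genuine obstacles to be exactly the two flagged above: (i) the rigorous treatment of the second-order differential inequality for the non-smooth diameters, whose kinks point against upper-bound comparisons, and, more seriously, (ii) the underdamped coupling regime $m\kappa>\tfrac{\mathcal{C}_1(0)}{4\sin\mathcal{C}_1(0)}$, where exponential decay of an a priori only Lipschitz, oscillation-prone quantity has to be extracted without a maximum principle.
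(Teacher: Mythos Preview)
The paper does not contain a proof of Theorem \ref{T3.1}: it is quoted verbatim from \cite{CHY1} in the ``state-of-the-art'' subsection~\ref{sec:3}.1 precisely to contrast it with the new results (Theorems \ref{T3.4}--\ref{T3.7}). There is therefore no in-paper proof to compare against.

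That said, your outline is faithful to the method of \cite{CHY1}, and the paper itself corroborates this: Lemma \ref{L6.1} invokes \cite[Proposition 4.1]{CHY1} for the confinement step $D(\Theta(t))\le C_1(m,\Theta^{in},\dot\Theta^{in})$ via exactly the quantity $g(t)=m\dot D(\Theta)+D(\Theta)$ you use, and Proposition \ref{P6.1} reproduces the trigonometric factorization and the constant $\tilde\Gamma=\sin(2C_1)/C_1$ underlying your linearized rate $c=\kappa\sin\mathcal{C}_1(0)/\mathcal{C}_1(0)$. Your derivation of the scalar inequality $m\ddot D+\dot D\le-\kappa\sin D$, the confinement bootstrap, the overdamped comparison via factoring the operator, and the slaving of $D(\dot\Theta)$ by a first-order Gr\"onwall are all the correct ingredients.

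The two points you flag as delicate are real and are handled in \cite{CHY1} essentially as you propose: (i) the second-order inequality for $D(\Theta)$ is interpreted piecewise on the smooth $\phi_{ij}$ at extremal pairs rather than by twice differentiating a Lipschitz maximum; (ii) the underdamped regime $m\kappa>\mathcal{C}_1(0)/(4\sin\mathcal{C}_1(0))$ is treated by an energy-type argument rather than by a signed comparison, since the Green's function of $m\partial_t^2+\partial_t+c$ changes sign. Your Sturmian remark that a nonnegative $\psi$ with $\psi''+\omega_0^2\psi\le0$ cannot persist on a half-line is correct in the $C^2$ setting but, as you note, the Lipschitz kinks of $D(\Theta)$ make the energy route the robust one.
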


\subsubsection{Nonidentical Kuramoto oscillators} In this part, we briefly discuss two frameworks (small inertia and large inertia) for CSP in the ensemble of nonidentical Kuramoto oscillators. 

We first present CSP for a small inertia case. For this, we set $\displaystyle D^{\infty}_1 \in \Big(0, \frac{\pi}{2} \Big]$ to be a solution to the trigonometric equation:
\[ D(\nu) :=\max_{i,j} |\nu_i - \nu_j|, \quad  \sin x = \frac{D(\nu)}{\kappa}. \]
\begin{theorem}  \label{T3.2}
\emph{\cite{CHY1}}
Suppose that the parameters $m_i, \gamma_i, \nu_i, \kappa$ and the initial data satisfy the following relations.
\begin{eqnarray*}
&& m_i = m, \quad \gamma_i = 1, \quad 1 \leq i \leq N, \quad 0< D(\nu) < \kappa, \\[2mm]
&&0 < m\kappa < \frac{D^{\infty}_1}{4 \sin D^{\infty}_1}, \quad \mbox{and} \quad 0< {\mathcal C}_2(0) < D^{\infty}_1.
\end{eqnarray*}
Then, for any solution $(\Theta, {\dot \Theta})$ to \eqref{C-1}, there exists a positive constant $\Lambda_2$ such that 
\[ \sup_{0 \leq t < \infty} D(\Theta(t)) < D^{\infty}_1, \quad  D({\dot \Theta}(t))  \leq {\mathcal O}(1) e^{-\Lambda_2 t}, \quad \mbox{as $t \to \infty$}. \]
\end{theorem}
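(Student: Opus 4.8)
The plan is to run a Lyapunov/bootstrap argument directly on the phase diameter $D(\Theta(t)):=\max_{i,j}(\theta_i(t)-\theta_j(t))$ and the frequency diameter $D(\dot\Theta(t)):=\max_{i,j}(\dot\theta_i(t)-\dot\theta_j(t))$. It proceeds in two stages: (i) a uniform-in-time confinement $\sup_{t\ge0}D(\Theta(t))<D^{\infty}_1$, and (ii), conditional on (i), an exponential decay $D(\dot\Theta(t))\le\mathcal O(1)e^{-\Lambda_2 t}$. Throughout one uses the Danskin/Rademacher calculus: if $M=M(t)$ and $\mu=\mu(t)$ realize the extremal phases, then $y(t):=D(\Theta(t))$ is locally Lipschitz with $\dot y=\dot\theta_M-\dot\theta_\mu$ and $m\ddot y=m(\ddot\theta_M-\ddot\theta_\mu)$ almost everywhere, and similarly at the extremal frequencies.

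\emph{Reduction to a scalar inequality.} First I would subtract the $i=M$ and $i=\mu$ equations in \eqref{C-1}, bound $\nu_M-\nu_\mu\le D(\nu)$, and use $\sin(\theta_j-\theta_M)-\sin(\theta_j-\theta_\mu)=-2\sin\frac{y}{2}\cos\big(\theta_j-\frac{\theta_M+\theta_\mu}{2}\big)$ together with $\cos\big(\theta_j-\frac{\theta_M+\theta_\mu}{2}\big)\ge\cos\frac{y}{2}$ (valid while $y<\pi$, since then $\big|\theta_j-\frac{\theta_M+\theta_\mu}{2}\big|\le\frac{y}{2}$) to bound the coupling term by $-\kappa\sin y$. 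This yields the a.e.\ differential inequality
\[
m\ddot y+\dot y\ \le\ D(\nu)-\kappa\sin y,\qquad 0\le y<\pi,
\]
whose right-hand side vanishes exactly at $y=D^{\infty}_1$ and $y=\pi-D^{\infty}_1$ and is strictly positive on $[0,D^{\infty}_1)$.

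\emph{Confinement (the main obstacle).} Set $T_*:=\sup\{T>0:\ y(t)<D^{\infty}_1\text{ on }[0,T)\}$; since $y(0)\le\mathcal C_2(0)<D^{\infty}_1$ we have $T_*>0$, and the crux is to prove $T_*=+\infty$. Here inertia is the genuine difficulty: for the inertialess model the arc $\{D(\Theta)<D^{\infty}_1\}$ is simply forward invariant, whereas a second-order diameter can overshoot, so the pair $(y,\dot y)$ must be controlled jointly. On $[0,T_*)$ one has $0\le y<D^{\infty}_1\le\frac{\pi}{2}$, so the chord bound $\sin s\ge\frac{\sin D^{\infty}_1}{D^{\infty}_1}s=\frac{D(\nu)}{\kappa D^{\infty}_1}s$ holds on $[0,y]$ and linearizes the inequality to $m\ddot y+\dot y+\frac{D(\nu)}{D^{\infty}_1}y\le D(\nu)$. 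The characteristic equation $m\lambda^2+\lambda+\frac{D(\nu)}{D^{\infty}_1}=0$ of the associated linear comparison equation (equilibrium $Y\equiv D^{\infty}_1$) has real negative roots exactly when $1\ge4m\frac{D(\nu)}{D^{\infty}_1}=4m\kappa\frac{\sin D^{\infty}_1}{D^{\infty}_1}$, i.e.\ precisely under the standing hypothesis $m\kappa<\frac{D^{\infty}_1}{4\sin D^{\infty}_1}$. In this over-damped regime I would then propagate the bound by an integrating-factor ($e^{t/m}$) / Duhamel estimate combined with the energy $\mathcal E(t):=\frac{m}{2}\dot y(t)^2+\int_{y(t)}^{D^{\infty}_1}(D(\nu)-\kappa\sin s)\,ds\ge0$, which satisfies $\dot{\mathcal E}\le-\dot y^2\le0$ on every subinterval where $\dot y\ge0$ — exactly the regime in which $y$ moves toward $D^{\infty}_1$ — to conclude that $\sup_{[0,T_*)}y$ is controlled by the inertia-corrected initial datum $\max\{y(0),\,y(0)+2m\dot y(0)\}\le\mathcal C_2(0)$, hence $\sup_{[0,T_*)}y<D^{\infty}_1$. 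Therefore $T_*=+\infty$ and $D(\Theta(t))\le D_*$ for all $t$ with some $D_*<D^{\infty}_1\le\frac{\pi}{2}$.

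\emph{Exponential decay of the frequency diameter.} With the phases confined in a set of diameter $D_*<\frac{\pi}{2}$, I would differentiate \eqref{C-1} in $t$ and apply the extremal-index calculus to $z(t):=D(\dot\Theta(t))$; using $\cos(\theta_j-\theta_i)\ge\cos D_*>0$ on the confined configuration, one obtains a.e.
\[
m\ddot z+\dot z\ \le\ -\kappa(\cos D_*)\,z .
\]
Since the comparison equation $m\ddot Z+\dot Z+\kappa(\cos D_*)Z=0$ is asymptotically stable and $z$ is a priori bounded on $[0,\infty)$ (Remark \ref{R2.1}), a standard second-order comparison/Gronwall argument yields $D(\dot\Theta(t))\le\mathcal O(1)e^{-\Lambda_2 t}$ with $\Lambda_2>0$ read off from the roots of $m\lambda^2+\lambda+\kappa\cos D_*=0$. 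In summary, the only genuinely nontrivial step is the confinement: one must show that $m\kappa<\frac{D^{\infty}_1}{4\sin D^{\infty}_1}$ is exactly the over-damping threshold suppressing the inertial overshoot, and that replacing $D(\Theta(0))$ by the corrected quantity $\mathcal C_2(0)$ absorbs the remaining transient; making the Duhamel/energy estimate sharp enough to recover these precise constants is the delicate part, everything else being routine.
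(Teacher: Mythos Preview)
The paper does not supply its own proof of this theorem: it is stated in Section~3.1 as a review of prior work, cited from \cite{CHY1}, with no argument given. So there is no in-paper proof to compare against.

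That said, your proposal is essentially the strategy of \cite{CHY1}: the Lyapunov/diameter approach in which one derives a second-order scalar differential inequality for $y(t)=D(\Theta(t))$, exploits the over-damping condition $m\kappa<\frac{D^\infty_1}{4\sin D^\infty_1}$ to rule out inertial overshoot past the equilibrium $D^\infty_1$, and then differentiates to obtain a linear second-order inequality for $z(t)=D(\dot\Theta(t))$ with strictly negative forcing once the phases are confined in an arc of length $<\pi/2$. The structure, the role of $\mathcal C_2(0)$ as the inertia-corrected initial datum, and the identification of the over-damping threshold are all correct.

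One comment on the confinement step, which you rightly flag as the delicate part: your energy $\mathcal E(t)=\frac{m}{2}\dot y^2+\int_{y}^{D^\infty_1}(D(\nu)-\kappa\sin s)\,ds$ has a potential well that is \emph{negative} on $(0,D^\infty_1)$ (since $D(\nu)-\kappa\sin s>0$ there), so $\mathcal E\ge0$ is not automatic and the monotonicity of $\mathcal E$ alone does not trap $y$ below $D^\infty_1$. The actual mechanism in \cite{CHY1} is closer to the linear comparison you sketch: on the linearized inequality $m\ddot y+\dot y+\alpha y\le D(\nu)$ with $\alpha=D(\nu)/D^\infty_1$, one uses the explicit over-damped solution formula (via the integrating-factor/Duhamel representation you mention) to show that the maximal excursion of $y$ is bounded by $\max\{y(0),\,y(0)+2m\dot y(0)\}=\mathcal C_2(0)$. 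Dropping the energy detour and running the comparison argument directly would tighten the sketch.
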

Next, we deal with the CSP with a large inertia.

\begin{theorem}  \label{T3.3}
\emph{\cite{CHY1}}
Suppose that parameters $m_i, \gamma_i, \nu_i, \kappa$ and initial data satisfy the following relations.
\begin{eqnarray*}
&& m_i = m, \quad \gamma_i = 1, \quad 1 \leq i \leq N, \quad 0 < D(\nu) < \frac{\pi}{8m}, \cr
&&  m \kappa \geq \frac{\pi}{8}, \quad 0 < {\mathcal C}_2(0) < 4 m D(\nu). 
\end{eqnarray*}
Then, for any solution $(\Theta, {\dot \Theta})$ to \eqref{C-1}, there exists a positive constant $\Lambda_3$ such that 
\[ \sup_{0 \leq t  < \infty}D(\Theta(t)) \leq 4m D(\nu), \quad D({\dot \Theta}(t)) \leq {\mathcal O}(1) e^{-\Lambda_3 t}, \quad \mbox{as $t \to \infty$}.   \]
\end{theorem}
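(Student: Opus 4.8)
The plan is to follow the Lyapunov-functional strategy of \cite{CHY1}, taking as basic observables the phase diameter $D(\Theta(t))=\max_{i,j}|\theta_{i}(t)-\theta_{j}(t)|$ and the frequency diameter $D(\dot\Theta(t))=\max_{i,j}|\dot\theta_{i}(t)-\dot\theta_{j}(t)|$. First I would trap $D(\Theta)$ below $4mD(\nu)<\pi/2$ by a bootstrap on a scalar second-order differential inequality, and then read off the exponential decay of $D(\dot\Theta)$ from the gradient-like formulation \eqref{B-2}--\eqref{B-3}. Since these diameters and $D(\nu)$ are translation invariant and \eqref{C-1} depends only on phase differences, I would pass to the frame rotating at $\nu_{c}$ (so that $\sum_{i}\nu_{i}=0$) whenever convenient; then $\theta_{c}(t)$ and $\omega_{c}(t)$ converge by Lemma~\ref{L2.1} and Remark~\ref{R2.2}, so a uniform bound on $D(\Theta)$ upgrades to a uniform bound on $\|\Theta(t)\|$.

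\textbf{Step 1 (the phase-diameter inequality).} For a.e.\ $t$, choosing extremal indices $M,m$ with $\theta_{M}-\theta_{m}=D(\Theta)$ and working with the upper Dini derivative, I would subtract the $M$-th and $m$-th equations of \eqref{C-1} to get
\[
m\,\ddot D(\Theta)+\dot D(\Theta)\ \le\ D(\nu)+\frac{\kappa}{N}\sum_{j=1}^{N}\bigl(\sin(\theta_{j}-\theta_{M})-\sin(\theta_{j}-\theta_{m})\bigr).
\]
As long as $D(\Theta)<\pi$, the identity $\sin A-\sin B=2\cos\tfrac{A+B}{2}\sin\tfrac{A-B}{2}$ together with $\theta_{m}\le\theta_{j}\le\theta_{M}$ bounds the sum above by $-\kappa\sin D(\Theta)$, so $m\,\ddot D(\Theta)+\dot D(\Theta)\le D(\nu)-\kappa\sin D(\Theta)$ while $D(\Theta)<\pi$. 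Let $x_{\ast}\in(0,\pi/2)$ be the unique root of $\kappa\sin x=D(\nu)$, which exists since $D(\nu)<\tfrac{\pi}{8m}\le\kappa$ by $m\kappa\ge\tfrac{\pi}{8}$; from $\sin x\ge\tfrac{2}{\pi}x$ on $[0,\pi/2]$ one gets $x_{\ast}\le\tfrac{\pi D(\nu)}{2\kappa}$, and $m\kappa\ge\tfrac{\pi}{8}$ is exactly what yields $x_{\ast}\le 4mD(\nu)<\pi/2$; this is the role of the constant $\pi/8$.

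\textbf{Step 2 (bootstrap for $D(\Theta)$).} Since $\mathcal{C}_{2}(0)<4mD(\nu)$ in particular forces $D(\Theta(0))<4mD(\nu)$, the set of times with $D(\Theta(t))<4mD(\nu)$ is a nonempty interval $[0,T^{\ast})$. If $T^{\ast}<\infty$, then $D(\Theta(T^{\ast}))=4mD(\nu)$, and on $[0,T^{\ast}]$ one has $D(\Theta)<\pi$, so the inequality of Step~1 applies; on any subinterval where $D(\Theta)\ge x_{\ast}$ its right-hand side is $\le 0$, hence $\tfrac{d}{dt}\bigl(e^{t/m}\dot D(\Theta)\bigr)\le 0$ there, so $e^{t/m}\dot D(\Theta)$ is nonincreasing and $\dot D(\Theta)$ can be integrated to a controlled overshoot. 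Following the overshoot back from the last crossing of $x_{\ast}$ (or from $t=0$), and accounting for the extra $m\,\dot D(\Theta)$ of overshoot produced by the inertial term --- which is exactly where the combination $D(\Theta(0))+2m\,\dot D(\Theta(0))$ built into $\mathcal{C}_{2}(0)$ enters --- one obtains $4mD(\nu)=D(\Theta(T^{\ast}))\le\mathcal{C}_{2}(0)<4mD(\nu)$, a contradiction; hence $T^{\ast}=\infty$. I expect this step to be the main obstacle: in the zero-inertia case the analogous bound is immediate from a first-order comparison principle, whereas here one must carefully follow the sign of $\dot D(\Theta)$ through its crossings of $x_{\ast}$ to close the estimate with the prescribed initial quantity.

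\textbf{Step 3 (exponential frequency decay).} With $D(\Theta(t))\le 4mD(\nu)<\pi/2$ for all $t$, passing to the rotating frame makes $\theta_{c}(t)$ converge, hence $\|\Theta(t)\|$ is uniformly bounded; $\|\dot\Theta(t)\|$ is uniformly bounded by Remark~\ref{R2.1}. With $M=mI$ and $\Gamma=I$ in \eqref{B-3}, the energy $\mathcal{E}(t):=\tfrac{m}{2}\|\dot\Theta(t)\|^{2}+V(\Theta(t))$ satisfies $\tfrac{d}{dt}\mathcal{E}=-\|\dot\Theta\|^{2}\le 0$ and is bounded below on the bounded trajectory, so Proposition~\ref{P2.1} applies and gives $\dot\Theta(t)\to 0$ and $\Theta(t)\to\Theta^{\infty}\in\mathcal{S}$ with $D(\Theta^{\infty})\le 4mD(\nu)<\pi/2$. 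At such a phase-locked state every $a_{ij}\cos(\theta_{i}^{\infty}-\theta_{j}^{\infty})$ is positive, so $\mathrm{Hess}\,V(\Theta^{\infty})$ is a positive-semidefinite weighted graph Laplacian whose only kernel direction is the translation direction; thus $V$ has \L ojasiewicz exponent $\tfrac12$ at $\Theta^{\infty}$, and linearizing \eqref{C-1} about $\Theta^{\infty}$ yields an exponentially stable second-order linear system. A standard perturbation/\L ojasiewicz argument (cf.\ \cite{HJ}) then upgrades $\dot\Theta(t)\to 0$ to $\|\dot\Theta(t)\|\le\mathcal{O}(1)e^{-\Lambda_{3}t}$, and since $D(\dot\Theta(t))\le 2\|\dot\Theta(t)\|$ this gives $D(\dot\Theta(t))\le\mathcal{O}(1)e^{-\Lambda_{3}t}$, as claimed.
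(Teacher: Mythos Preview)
The paper does not contain a proof of Theorem~\ref{T3.3}. This theorem is stated in Section~\ref{sec:3} under ``The-state-of-the-art results'' as a citation of prior work from \cite{CHY1}; the present paper only reviews it to set the stage for the new results (Theorems~\ref{T3.4}--\ref{T3.7}) and does not reprove it. Consequently there is no proof in the paper against which your proposal can be compared.

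That said, a brief remark on your sketch relative to what one finds in \cite{CHY1}. Your Step~1 and Step~2 follow the spirit of the original argument: derive a second-order differential inequality for $D(\Theta)$ and run a bootstrap to trap it below $4mD(\nu)$. You correctly flag Step~2 as the delicate point; the overshoot bookkeeping you describe is exactly where the quantity ${\mathcal C}_2(0)=\max\{D(\Theta(0)),\,D(\Theta(0))+2m\dot D(\Theta(0))\}$ arises in \cite{CHY1}, and your outline is consistent with that, though as written it is only a sketch rather than a closed argument. Your Step~3, however, departs from \cite{CHY1}: there the exponential decay of $D(\dot\Theta)$ is obtained directly by differentiating \eqref{C-1}, deriving a companion second-order inequality for $D(\dot\Theta)$, and applying a Gronwall-type estimate once $D(\Theta)$ is trapped in a region where $\cos(\theta_i-\theta_j)$ is uniformly positive. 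Your route through Proposition~\ref{P2.1}, the Hessian of $V$, and a {\L}ojasiewicz argument is more indirect; it would work, but it imports machinery (convergence to a specific equilibrium, spectral analysis of the linearization) that the original proof does not need, and the constant $\Lambda_3$ you obtain this way would be less explicit than the one coming from the direct diameter inequality.
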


\subsection{Three main results} \label{sec:3.2} In this subsection, we briefly list our main results for the particle and kinetic Kuramoto models with inertia, and compare them with previous results discussed in previous subsection. 
\subsubsection{The Kuramoto model}  In this part, we summarize two main results on the CSP for the particle Kuramoto model under two frameworks. \newline

\noindent $\bullet$ {\bf Setting A}:  Assume that 
\[  m_i = m, \quad \gamma_i = \gamma, \quad \nu_i = 0, \quad a_{ij} = \frac{1}{N}, \quad 1 \leq i, j \leq N. \]
Under this setting, the Kuramoto model \eqref{main} becomes 
\begin{equation} \label{C-10}
m \ddot{\theta}_i =-\gamma \dot{\theta}_i + \frac{\kappa}{N}\sum_{j=1}^N \sin(\theta_j - \theta_i),\quad  i =1,\cdots,N, \quad t > 0.
\end{equation}
This can be expressed as an alternative form of \eqref{D-1} for $({\theta}_i, \omega_i)$:
\begin{equation} \label{C-11}
\begin{cases}
\displaystyle {\dot \theta}_i = \omega_i, \quad t > 0, \\
\displaystyle {\dot \omega}_i = -\frac{1}{m} \lt( \gamma \omega_i - \frac{\kappa}{N}\sum_{j=1}^N \sin(\theta_j - \theta_i) \rt).
\end{cases}
\end{equation}
Note that the equilibrium solution $(\theta_i^{\infty}, \omega_i^{\infty})$ for \eqref{C-11} is a solution of the following system:
\begin{equation} \label{C-12}
\omega_i^{\infty}  = 0, \quad \sum_{j=1}^N \sin(\theta^{\infty}_j - \theta^{\infty}_i) = 0, \quad 1 \leq i \leq N,
\end{equation}
which coincides with an equilibrium system of the Kuramoto model without inertia:
\[ {\dot \theta}_i = \frac{\kappa}{N} \sum_{j=1}^{N} \sin (\theta_j - \theta_i), \quad i = 1, \cdots, N. \]
Thus, it follows from \eqref{B-5} and \eqref{C-12} that equilibrium solution $(\theta_i^\infty, \omega_i^\infty)$ satisfies
\begin{equation} \label{C-13}
 R_p^{\infty} \sin(\theta^\infty - \theta_i^{\infty}) = 0, \quad 1 \leq i \leq N,
\end{equation} 
where $R_p^{\infty} := R_p(\Theta^{\infty})$. Thus, there are alternatives:
\begin{equation} \label{C-14}
 \mbox{Either}~R_p^{\infty} = 0, \quad \mbox{or} \quad  \sin(\theta^\infty - \theta_i^{\infty}) = 0, \quad 1 \leq i \leq N. 
\end{equation} 
Next, we present a definition of $(N-k, k)$-type state which clearly satisfies \eqref{C-13}.
\begin{definition} \label{D3.1}
We will say that $\Theta^\infty$ is a $(N-k,k)$-type state if and only if there exists a $\varphi^\infty$ and  $I \subseteq \{1,\cdots,N\} $with $|I| = k > N/2$ such that 
\[ \theta_i^\infty= 
\begin{cases}
\varphi^\infty, \quad \mbox{mod $2\pi \quad $ for $i \in I$} \\
\varphi^\infty + \pi \quad  \mbox{mod $2\pi \quad $ for $i \in I^c$}
\end{cases}
\]
\end{definition}
\begin{remark} Note that for $k = N$, $(N-k, k)$ state corresponds to one-point phase cluster where all phases are the same, 
whereas $(N-k, k)$-state with $k \in \{ [N/2] + 1, \cdots, N-1 \}$ is a bipolar state.
\end{remark}
Now, we recall a characterization of phase-locked states which combines \eqref{C-14} and Definition \ref{D3.1}.  
\begin{proposition} \label{P3.1}
\emph{\cite{BCM}} 
Let $\Theta^{\infty}$ be a phase-locked state for \eqref{C-10}. Then, at least one of following assertions hold. 
\[ \mbox{Either}~R_p^{\infty} = 0 \quad \mbox{or} \quad \Theta^{\infty}~\mbox{is a $(N-k, k)$-type state}. \]
\end{proposition}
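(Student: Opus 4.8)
The plan is to read the claim straight off the equilibrium relation \eqref{C-13} together with the defining identity \eqref{B-4} of the order parameter. Since a phase-locked state $\Theta^\infty$ is in particular an equilibrium of \eqref{C-10}, the computation in \eqref{C-11}--\eqref{C-13} shows that its order parameter $R_p^\infty := R_p(\Theta^\infty)$ and average phase $\varphi_p^\infty$ satisfy
\[
R_p^\infty \sin(\varphi_p^\infty - \theta_i^\infty) = 0, \qquad 1 \le i \le N,
\]
which is exactly the dichotomy \eqref{C-14}: either $R_p^\infty = 0$, or $R_p^\infty > 0$ and $\sin(\varphi_p^\infty - \theta_i^\infty) = 0$ for every $i$. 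In the first case there is nothing further to prove, so from now on I would assume $R_p^\infty > 0$.

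Under this assumption, $\sin(\varphi_p^\infty - \theta_i^\infty) = 0$ forces $\theta_i^\infty \equiv \varphi_p^\infty$ or $\theta_i^\infty \equiv \varphi_p^\infty + \pi$ modulo $2\pi$, for each $i$. I would then partition the index set $\{1,\dots,N\}$ as
\[
I := \{\, i : \theta_i^\infty \equiv \varphi_p^\infty \bmod 2\pi \,\}, \qquad I^c := \{\, i : \theta_i^\infty \equiv \varphi_p^\infty + \pi \bmod 2\pi \,\},
\]
and it only remains to check that $k := |I|$ satisfies $k > N/2$. Substituting this partition into the definition \eqref{B-4} gives
\[
N R_p^\infty e^{{\mathrm i}\varphi_p^\infty} = \sum_{j=1}^N e^{{\mathrm i}\theta_j^\infty} = \bigl( |I| - |I^c| \bigr)\, e^{{\mathrm i}\varphi_p^\infty},
\]
hence $N R_p^\infty = |I| - |I^c|$; since $R_p^\infty > 0$ and $|I| + |I^c| = N$ with $|I|, |I^c|$ nonnegative integers, we get $|I| - |I^c| \ge 1$, so $k = |I| > N/2$. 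By Definition \ref{D3.1}, $\Theta^\infty$ is then an $(N-k,k)$-type state, which completes the argument.

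There is no substantial obstacle here: the proposition is essentially a bookkeeping consequence of \eqref{C-13} and the definition of the order parameter. The only points requiring a little care are that in the second alternative the average phase $\varphi_p^\infty$ is genuinely well-defined --- which is precisely what $R_p^\infty > 0$ guarantees --- and that the inequality for $k$ is strict, which comes from the integrality of $|I| - |I^c| = N R_p^\infty > 0$ rather than merely from $R_p^\infty \ge 0$. It is also worth noting that this characterization coincides with the known structure of equilibria of the zero-frequency Kuramoto model without inertia recorded in \cite{BCM}, consistently with the observation around \eqref{C-12} that the two equilibrium systems are identical.
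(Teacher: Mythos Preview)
Your argument is correct. The paper itself does not prove this proposition but merely cites \cite{BCM}, after having set up exactly the equilibrium relation \eqref{C-13} and the dichotomy \eqref{C-14} that you use as starting point; your proof supplies the missing details by computing $N R_p^\infty = |I| - |I^c|$ from \eqref{B-4} to pin down the strict inequality $k > N/2$, which is precisely the content attributed to \cite{BCM}.
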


Our first main result can be stated in the following theorem. 
\begin{theorem}\label{T3.4} Suppose that the coupling strength $\kappa$ and initial configurations $(\theta_i^{in}, \omega_i^{in})$ satisfy 
\begin{equation*}
\kappa > 0 \quad \mbox{and} \quad \frac{m}{N}\sum_{i=1}^N |\omega^{in}_i|^2 \leq \kappa R_p(\Theta^{in})^2,
\end{equation*}
and let $(\Theta, \Omega)$ be a solution to \eqref{C-10} with initial data $ (\theta_i^{in}, \omega_i^{in})$. Then, the following assertions hold.
\begin{enumerate}
\item
Complete synchronization emerges asymptotically:
\[ \lim_{t \to \infty} |\omega_i(t) - \omega_j(t)| = 0, \quad 1 \leq i, j \leq N.    \]
\item
If $R_p(\Theta^{in}) = 0$, then the system stays as a phase-locked state and 
\[ R_p(\Theta(t)) = 0, \qquad t \geq 0. \]
\item
If $R_p(\Theta^{in}) > 0$, then there exist a positive lower bound $R_*$ for $R_p$ and $(N-k,k)$-type phase $\Theta^{\infty}$ such that 
\[ \inf_{0 \leq t < \infty} R_p(\Theta(t)) \geq R_* \quad \mbox{and} \quad \lim_{t \to \infty} \|\Theta(t) - \Theta^{\infty}\| = 0. \]
\end{enumerate}
\end{theorem}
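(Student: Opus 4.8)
The engine of the proof is the energy functional $\mathcal{E}(t):=\frac{m}{2}\|\dot\Theta(t)\|^2+V(\Theta(t))$, where, by \eqref{B-2} and \eqref{B-4-2} with $\nu_i=0$, the potential reduces to $V(\Theta)=\frac{\kappa N}{2}\bigl(1-R_p(\Theta)^2\bigr)\in[0,\tfrac{\kappa N}{2}]$. Differentiating along the gradient-like system \eqref{B-3} gives the dissipation identity $\frac{d}{dt}\mathcal{E}(t)=-\gamma\|\dot\Theta(t)\|^2\le 0$, so that $\mathcal{E}$ decreases to a limit $\mathcal{E}_\infty\ge 0$ and $\dot\Theta\in L^2(0,\infty)$. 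The key observation is that the standing hypothesis is exactly the statement $\mathcal{E}(0)=\frac{m}{2}\|\Omega^{in}\|^2+\frac{\kappa N}{2}\bigl(1-R_p(\Theta^{in})^2\bigr)\le\frac{\kappa N}{2}$, which is the single input that propels all three conclusions.

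For (1), I would combine the uniform bound $\sup_t\|\dot\Theta(t)\|<\infty$ from Remark \ref{R2.1} with the uniform bound $\sup_t\|\ddot\Theta(t)\|<\infty$ read off from \eqref{C-11}; then $\frac{d}{dt}\|\dot\Theta\|^2$ is bounded while $\|\dot\Theta\|^2\in L^1(0,\infty)$, so Barbalat's lemma yields $\|\dot\Theta(t)\|\to0$, hence $\omega_i(t)\to0$ for every $i$ and a fortiori $|\omega_i(t)-\omega_j(t)|\to0$. For (2), the hypothesis with $R_p(\Theta^{in})=0$ forces $\Omega^{in}=0$; moreover $R_p(\Theta^{in})=0$ means $\sum_j e^{\mathrm i\theta_j^{in}}=0$, so $\sum_j\sin(\theta_j^{in}-\theta_i^{in})=\mathrm{Im}\bigl(e^{-\mathrm i\theta_i^{in}}\sum_j e^{\mathrm i\theta_j^{in}}\bigr)=0$ for all $i$. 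Thus $(\Theta^{in},0)$ solves the equilibrium system \eqref{C-12}, and uniqueness of solutions to \eqref{C-11} gives $\Theta(t)\equiv\Theta^{in}$, whence $R_p(\Theta(t))\equiv0$.

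For (3), the lower bound on the order parameter follows from $V(\Theta(t))\le\mathcal{E}(t)\le\mathcal{E}(0)\le\frac{\kappa N}{2}$: if $\inf_{t\ge 0}R_p(\Theta(t))=0$, pick $t_n\to\infty$ with $R_p(\Theta(t_n))\to0$, so $V(\Theta(t_n))\to\frac{\kappa N}{2}$, forcing $\mathcal{E}(t_n)\to\frac{\kappa N}{2}=\mathcal{E}(0)$; monotonicity of $\mathcal{E}$ then gives $\mathcal{E}\equiv\frac{\kappa N}{2}$, hence $\dot\Theta\equiv0$, $\Omega^{in}=0$ and $R_p(\Theta^{in})=0$, contradicting $R_p(\Theta^{in})>0$. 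The same identity together with $\|\dot\Theta(t)\|\to0$ shows $R_p(\Theta(t))^2\to1-\frac{2\mathcal{E}_\infty}{\kappa N}>0$, so indeed $\inf_{[0,\infty)}R_p(\Theta(t))=:R_*>0$. For the convergence of $\Theta$, the plan is: (i) the $\theta_i$ naturally live on $\T^N$ and the $\omega_i$ are bounded, so the trajectory is precompact in $\T^N\times\R^N$, and LaSalle's invariance principle with Lyapunov functional $\mathcal{E}$ places the $\omega$-limit set inside $\mathcal{S}\times\{0\}$; (ii) on this limit set $R_p\ge R_*>0$, so Proposition \ref{P3.1} forces it to consist of $(N-k,k)$-type states, which form a disjoint union of circles in $\T^N$ — being connected, the $\omega$-limit set therefore lies in a single such circle $\mathcal{C}_I$; (iii) hence every phase difference $\theta_i(t)-\theta_j(t)$ approaches the discrete set $\pi\Z$ and, once it is within $\frac{\pi}{2}$ of that set, cannot cross a gap, so it stays bounded; combined with the boundedness of $\theta_c(t)$ from Lemma \ref{L2.1} (recall $\nu_c=0$), this gives $\sup_t\|\Theta(t)\|<\infty$; (iv) Proposition \ref{P2.1} then produces $\Theta^\infty\in\mathcal{S}$ with $\Theta(t)\to\Theta^\infty$, and $R_p(\Theta^\infty)=\lim_t R_p(\Theta(t))\ge R_*>0$ together with Proposition \ref{P3.1} forces $\Theta^\infty$ to be of $(N-k,k)$-type.

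The main obstacle is step (iii), the uniform boundedness of $\Theta(t)$ in $\R^N$: the energy dissipation and the bound $R_p\ge R_*$ only control the configuration modulo $2\pi$, so one must exclude a slow drift of the phase differences by several multiples of $2\pi$ — and it is precisely the LaSalle-plus-connectedness reduction to a single equilibrium circle $\mathcal{C}_I$ that kills this drift, since near $\mathcal{C}_I$ each $\theta_i-\theta_j$ is trapped in a small neighbourhood of one point of $\pi\Z$. A more hands-on alternative would be a second-order Gronwall estimate for the phase diameter relative to $\theta_c$ exploiting $R_p\ge R_*$ and $\dot\Theta\in L^2$, but I expect the LaSalle route to be cleaner. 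Once uniform boundedness is secured, the {\L}ojasiewicz--Simon-type convergence theorem packaged in Proposition \ref{P2.1} finishes the argument.
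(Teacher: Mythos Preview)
Your arguments for (1) and (2) are essentially identical to the paper's: energy dissipation plus Barbalat for (1), and the observation that $R_p(\Theta^{in})=0$ together with the hypothesis forces $(\Theta^{in},0)$ to be an equilibrium for (2). Your contradiction argument for the lower bound $R_p\ge R_*$ is also the same idea as the paper's Step~B, though you should allow the minimizing sequence $t_n$ to have a finite accumulation point as well (the paper handles this by taking $t_0\in(0,\infty]$).

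For the convergence in (3), however, you take a genuinely different route. The paper does \emph{not} invoke LaSalle or Proposition~\ref{P2.1}; instead it spends a separate computation (Proposition~\ref{P4.2}) showing $\dot\omega_i\to 0$, by deriving a Gronwall-type inequality for ${\mathcal F}(t)=\tfrac12\sum_i|\gamma\omega_i+\kappa R_p\sin(\theta_i-\varphi_p)|^2$. Combining $\omega_i\to0$, $\dot\omega_i\to0$, and $R_p\ge R_*$ in the equation $m\dot\omega_i=-\gamma\omega_i-\kappa R_p\sin(\theta_i-\varphi_p)$ gives $\sin(\theta_i-\varphi_p)\to0$ directly; continuity then traps $\theta_i-\varphi_p$ near a single $k_i\pi$, and (after the WLOG normalization $\theta_c\equiv0$) this forces $\varphi_p=-\tfrac1N\sum_i(\theta_i-\varphi_p)\to\varphi^*$, hence $\theta_i\to k_i\pi+\varphi^*$. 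So the paper obtains convergence of each $\theta_i$ in $\R$ by hand, without the torus compactness or the \L{}ojasiewicz machinery. Your LaSalle-on-$\T^N$ argument, followed by the connectedness reduction to a single circle ${\mathcal C}_I$ and then Proposition~\ref{P2.1}, is a correct alternative that trades the explicit $\dot\omega_i\to0$ estimate for more abstract dynamical-systems input; it is arguably cleaner conceptually, while the paper's route is more elementary and yields the limiting configuration $(k_i\pi+\varphi^*)_i$ explicitly.
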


\vspace{0.2cm}

Next, we turn to the second setting. \newline

\noindent $\bullet$ {\bf Setting B}:  
Assume that 
\[ 
D(M) :=  \max_{1 \leq i,j \leq N} |m_i - m_j| > 0 \quad \mbox{and} \quad D(\gamma) := \max_{1 \leq i,j \leq N} |\gamma_i - \gamma_j| > 0.
\]
Note that  $\theta_i$ satisfies
\[
m_i\dot{\omega}_i = -\gamma_i \omega_i - \kappa R_{p,i} \sin(\theta_i - \varphi_{p,i})\quad \mbox{for } i =1,\cdots,N \mbox{ and } t > 0.
\]
Suppose that $\Theta^{\infty}$ is the equilibrium state starting from the initial data $\Theta^{in}$. Then, it satisfies
\[  R^{\infty}_{p,i} \sin(\theta^{\infty}_i - \varphi^{\infty}_{p,i})  = 0, \quad \mbox{i.e.,} \quad \mbox{either}~~R^{\infty}_{p,i} = 0, \quad \mbox{or} \quad  \sin(\theta^{\infty}_i - \varphi^{\infty}_{p,i}) = 0. \]

\begin{theorem}\label{T3.5}
 Suppose that the network topology, the coupling strength and the initial configuration satisfy the following assumptions: there exist $\bar a > 0$ and $\delta \geq 0$ such that
\begin{align}
\begin{aligned} \label{C-17}
& m_i > 0, \quad \gamma_i > 0, \quad  \sum_{j=1}^N|a_{ij} - \bar a| \leq \delta \quad \mbox{for all} \quad i =1,\cdots,N, \\
& \mbox{and} \quad \sum_{i,j=1}^Na_{ij}\cos(\theta^{in}_i - \theta^{in}_j)  \geq \sum_{i=1}^N m_i |\omega^{in}_i|^2 + 3\delta N + \frac{\delta^2}{\bar a}.
\end{aligned}
\end{align}
Additionally, let $(\Theta, \Omega)$ be a solution to \eqref{C-10} with initial data $\{ (\theta_i^{in}, \omega_i^{in})$. Then, the following assertions hold.
\begin{enumerate}
\item
Complete synchronization emerges asymptotically:
\[ \lim_{t \to \infty} |\omega_i(t) - \omega_j(t)| = 0, \quad 1 \leq i, j \leq N.    \]
\item
If $\displaystyle \sum_{i,j=1}^N a_{ij}\cos(\theta^{in}_i - \theta^{in}_j) = 0$,  then the system stays as a phase-locked state and 
\[ R_{p,i}(\Theta(t)) = 0, \qquad t \geq 0. \]
\item
If $\displaystyle \sum_{i,j=1}^N a_{ij}\cos(\theta^{in}_i - \theta^{in}_j) > 0$, then there exists a positive lower bound $R_*$ for $R_{p,i}$ and $\theta_i(t) - \varphi_{p,i} \to k_i \pi$ as $t \to \infty$ for some $k_i \in \Z$.
\end{enumerate}
\end{theorem}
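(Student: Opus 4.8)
The plan is to mimic the energy-dissipation argument for the all-to-all case (Theorem~\ref{T3.4}), using the $\delta$-terms in \eqref{C-17} as the slack that absorbs the perturbation errors coming from $a_{ij}\neq\bar a$. Since $\nu_i=0$, introduce the energy
\[ \mathcal{E}(t):=\frac{1}{2}\sum_{i=1}^N m_i\omega_i(t)^2+\frac{\kappa}{2}\sum_{i,j=1}^N a_{ij}\bigl(1-\cos(\theta_i(t)-\theta_j(t))\bigr)\ge 0, \]
which, by the symmetry of $\mathcal{A}$, satisfies $\dot{\mathcal{E}}(t)=-\sum_i\gamma_i\omega_i(t)^2\le 0$. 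Hence $\mathcal{E}$ decreases to some $\mathcal{E}_\infty\ge 0$ and $\int_0^\infty\sum_i\gamma_i\omega_i^2\,dt<\infty$. By Remark~\ref{R2.1} the $\omega_i$ are uniformly bounded, and then so are the $\dot\omega_i$ (from the equation), so $t\mapsto\sum_i\gamma_i\omega_i(t)^2$ has bounded derivative; Barbalat's lemma forces $\sum_i\gamma_i\omega_i^2\to 0$, and since $\gamma_i>0$ every $\omega_i(t)\to 0$. This is assertion~(1).

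For the order-parameter part, set $Q(t):=\sum_{i,j}a_{ij}\cos(\theta_i(t)-\theta_j(t))$ and $S(t):=\sum_{j=1}^N e^{\mathrm i\theta_j(t)}$. Monotonicity of $\mathcal{E}$ gives $Q(t)\ge Q(0)-\tfrac1\kappa\sum_i m_i|\omega_i^{in}|^2$, while $\sum_j|a_{ij}-\bar a|\le\delta$ yields the one-sided perturbation bounds $Q(t)\le \bar a|S(t)|^2+N\delta$ and $R_{p,i}(t)\ge \bar a|S(t)|-\delta$. Combining these with \eqref{C-17}, in the case $Q(0)>0$ (automatic once $\delta>0$, since then $Q(0)\ge 3\delta N+\delta^2/\bar a>0$) one gets $\bar a|S(t)|^2\ge 2\delta N+\delta^2/\bar a$ uniformly in $t$, hence the uniform bound $R_{p,i}(t)\ge\sqrt{2\bar a\delta N+\delta^2}-\delta=:R_*>0$; in the borderline case $\delta=0$ the capacities are constant and one recovers the Theorem~\ref{T3.4} argument, where $\inf_t R_{p,i}>0$ is obtained by contradiction --- if $R_{p,i}(t_n)\to 0$ along some $t_n\to\infty$, then $\mathcal{E}(t_n)$ would tend to a value so large that, together with $\mathcal{E}$ being nonincreasing from $\mathcal{E}(0)$ and with $\omega_i\to 0$, it would force $Q(0)=0$, contrary to assumption. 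Assertion~(2) is then immediate: $Q(0)=0$ is compatible with \eqref{C-17} only if $\delta=0$ and $\omega_i^{in}=0$ for all $i$; then $S(0)=0$, which (as $\sum_j\sin(\theta_j^{in}-\theta_i^{in})=\mathrm{Im}(e^{-\mathrm i\theta_i^{in}}S(0))=0$) makes $\Theta^{in}$ an equilibrium of \eqref{main}, so the solution is stationary and $R_{p,i}(\Theta(t))=R_{p,i}(\Theta^{in})=\bar a|S(0)|=0$.

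It remains to establish the asymptotic structure in assertion~(3). Since $R_{p,i}(t)\ge R_*>0$, the average phases $\varphi_{p,i}$ are well defined and smooth, and $m_i\dot\omega_i=-\gamma_i\omega_i-\kappa R_{p,i}\sin(\theta_i-\varphi_{p,i})$. First I would upgrade $\omega_i\to 0$ to $\dot\omega_i\to 0$: because $R_{p,i}\ge R_*>0$ and the $\omega_j$ are bounded, $\dot R_{p,i}$ and $\dot\varphi_{p,i}$ are bounded, hence $\ddot\omega_i$ is bounded, and a Barbalat-type argument (if $|\dot\omega_i(t_n)|$ stayed bounded away from $0$ at times $t_n\to\infty$, then $\omega_i$ would change by a fixed amount over intervals of fixed length, contradicting $\omega_i\to 0$) gives $\dot\omega_i\to 0$. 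Then $\kappa R_{p,i}\sin(\theta_i-\varphi_{p,i})=-m_i\dot\omega_i-\gamma_i\omega_i\to 0$, and $R_{p,i}\ge R_*$ forces $\sin(\theta_i(t)-\varphi_{p,i}(t))\to 0$, i.e.\ $\mathrm{dist}(\theta_i(t)-\varphi_{p,i}(t),\pi\Z)\to 0$. Finally, $t\mapsto\theta_i(t)-\varphi_{p,i}(t)$ is continuous, so it cannot be close to two distinct points of $\pi\Z$ at arbitrarily large times without passing, at arbitrarily large times, through a half-integer multiple of $\pi$ (where the distance to $\pi\Z$ equals $\pi/2$) --- contradicting the previous limit. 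Hence $\theta_i(t)-\varphi_{p,i}(t)\to k_i\pi$ for a fixed $k_i\in\Z$, which is assertion~(3).

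The main obstacle is the uniform positive lower bound on $R_{p,i}$: the perturbation errors must be tracked precisely enough that the slack $3\delta N+\delta^2/\bar a$ in \eqref{C-17} exactly suffices, and in the degenerate regime $\delta=0$ (non-strict hypothesis) one cannot avoid the contradiction argument, so assertion~(1) must be proved first and then fed into the proof of~(3). Note also that, unlike the homogeneous case of Theorem~\ref{T3.4}, the heterogeneity of $m_i$ and $\gamma_i$ prevents control of $\sup_t\|\Theta(t)\|$ --- only $\tfrac1N\sum_i\gamma_i\theta_i$ is seen to converge, as $\omega_i\to 0$ --- so we do not obtain convergence of $\Theta(t)$ itself but only of the relative phases $\theta_i-\varphi_{p,i}$, which is exactly why assertion~(3) is weaker than its counterpart in Theorem~\ref{T3.4}.
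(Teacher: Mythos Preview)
Your proposal is correct and follows essentially the same route as the paper: energy dissipation gives $\omega_i\to 0$ via Barbalat, perturbation bounds on $a_{ij}-\bar a$ transfer the lower bound from $Q(t)$ to the global and then local order parameters, and the equation then forces $\sin(\theta_i-\varphi_{p,i})\to 0$. The only noteworthy technical divergence is in how $\dot\omega_i\to 0$ is obtained: the paper sets up the auxiliary functional $\mathcal{F}(t)=\tfrac12\sum_i|\gamma_i\omega_i+\kappa R_{p,i}\sin(\theta_i-\varphi_{p,i})|^2$ and drives it to zero by a Gronwall-type estimate (Lemma~\ref{L4.1}), whereas you apply Barbalat directly to $\omega_i$ after bounding $\ddot\omega_i$ --- your argument is shorter and, as you observe, does not actually require the lower bound on $R_{p,i}$ at that stage (you can differentiate the original equation rather than the order-parameter form). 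For the lower bound on $R_{p,i}$ itself, the paper expands $R_{p,i}^2$ as a bilinear form in the $a_{ij}$ to reach $R_{p,i}^2>\bar a(\mathcal{J}_0-3\delta N)-\delta^2$, while your triangle-inequality bound $R_{p,i}\geq\bar a|S|-\delta$ is algebraically cleaner and yields the same threshold; both routes share the same (paper-inherited) looseness in the $\kappa$ bookkeeping between \eqref{C-17} and the energy inequality.
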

\begin{remark}
1.~We use the local oder parameter assumption to rewrite \eqref{C-17} as follows.
\[
\sum_{i=1}^N R_{p,i}(0)\cos(\theta^{in}_i - \varphi^{in}_{p,i}) \geq \sum_{i=1}^N m_i\omega_i^2(0) + 3\delta N + \frac{\delta^2}{\bar a}. 
\]
\noindent 2.~The assumptions on the capacity matrix $A = (a_{ij})$ presented in \eqref{C-17} do not require that all entries of that matrix should be strictly positive, i.e., 
the matrix $A$ may have zero entries. \newline

\noindent 3.~If 
\[  a_{ij} = \frac{1}{N}, \quad  m_i=m, \quad \mbox{and} \quad \gamma_i=1 \quad  1 \leq i,j \leq N, \]
then we can choose $\bar a = \kappa/N$ and $\delta = 0$, and this leads to the same conditions in Theorem \ref{T3.4} for the frequency synchronization.
\end{remark}

\subsubsection{A kinetic Kuramoto equation} \label{sec:3.2}
In this part, we summarize our third result on the complete synchronization of the kinetic Kuramoto equation \eqref{B-7}.  For a given initial Radon measure $f^{in}\,d\theta d\omega$, let $P(t)$ be the orthogonal $\theta$-projection of the set $\mbox{supp}(f(\cdot, \cdot, t))$:
\[
D_\theta(f(\cdot, \cdot, t)) := \mbox{diam}(P(t)).
\]
Using this notation, we also set
\[
C_{\ell}(m,f^{in}) := \max\lt\{D_{\theta}(f^{in}),D_{\theta}(f^{in}) + \ell m \frac{d}{dt} \Big|_{t = 0 +} D_\theta(f(t))  \rt\}, \quad \ell = 1, 2. 
\]
We also denote by $\mathcal{P}_2(\T \times \R)$ the set of all probability measures on $\T \times \R$ with finite second moment.

We first present a uniform-in-time stability estimate of measure valued solutions to the kinetic Kuramoto equation \eqref{B-7}.

\begin{theorem} \label{T3.6} Suppose that initial measure, $m$ and $\kappa$ satisfy the assumptions.
\begin{eqnarray*}
&& (i)~\mbox{supp} f^{in}~\mbox{is a compact subset $\T \times \R$} \quad \mbox{and} \quad  0 < C_1(m,f^{in}) < \frac{\pi}{2}.    \cr
&& (ii)~0<  m\kappa \leq \frac{\sin\lt(4C_1(m,f^{in}) \rt)}{2C_1(m,f^{in})}.
\end{eqnarray*}
Then, the following assertions hold:
\begin{enumerate}
\item
There exists a unique measure-valued solution $d\mu_t = f(t) \,d\theta d\omega  \in L^\infty([0,\infty);\mathcal{P}_2(\T  \times \R))$ to the equation \eqref{G-1} with the initial data 
$d\mu_0 = f^{in} d\theta d\omega$. Moreover, 
$d\mu_t = f d\theta d\omega$ can be approximated by the empirical measure $\nu_t = \delta(f^N)$ in the Wasserstein distance uniformly in time:
\[
\limsup_{N \to +\infty} \sup_{t \in [0,\infty)} W_2(f^N(t), f(t)) = 0.
\]
\item
If ${\tilde f}$ is another measure-valued solution to \eqref{G-1} with a compactly supported initial measure ${\tilde f}^{in}$, then there exists a nonnegative constant $C$ independent of $t$ such that
\[
W_2(f(t),{\tilde f}(t)) \leq CW_2(f^{in}, {\tilde f}^{in}) \quad \mbox{for} \quad t \geq 0.
\]
\end{enumerate}
\end{theorem}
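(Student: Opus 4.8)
The plan is to realize the measure-valued solution as a push-forward of the initial datum along its characteristic flow, and then lift the particle-level confinement argument behind Theorem~\ref{T3.1} to the kinetic setting. For a (yet to be constructed) $f\in\mc_w([0,\infty);\pp_2(\T\times\R))$, consider the characteristic system
\[
\dot\theta=\omega,\qquad \dot\omega=\frac1m\left(-\gamma\omega+\kappa\int_{\T\times\R}\sin(\theta_*-\theta)\,df(\theta_*,\omega_*,t)\right),
\]
and let $\Phi_t$ denote its flow on $\T\times\R$. Then $f(t)=\Phi_t\#f^{in}$ solves \eqref{B-7} in the distributional sense, and conversely every measure-valued solution with compactly supported, finite-second-moment initial datum arises this way; for any finite $T$, existence and uniqueness of such a self-consistent $f$ follow from a contraction-mapping argument in $\mc_w([0,T);\pp_2(\T\times\R))$, using that the force is Lipschitz continuous with respect to $W_2$ on sets of uniformly bounded support, exactly as in \cite{CHY2}. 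The whole point is to upgrade this to $T=+\infty$; uniqueness in part (1) will then be a special case of part (2).

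First I would establish uniform-in-time support estimates, the kinetic counterpart of Theorem~\ref{T3.1} (cf.\ the strategies of \cite{CHN,CHLXY}). Writing $D_\theta(t):=D_\theta(f(\cdot,\cdot,t))$ and letting $D_\omega(t)$ be the $\omega$-diameter of $\mathrm{supp}\,f(t)$, evaluating the characteristic ODEs at the extremal points of the support yields a closed differential-inequality system of the schematic form
\[
\frac{d}{dt}D_\theta(t)\le D_\omega(t),\qquad m\,\frac{d}{dt}D_\omega(t)+\gamma\,D_\omega(t)\le-\kappa\sin D_\theta(t),
\]
the second inequality being valid as long as $D_\theta(t)<\pi$ (using $\sin a-\sin(a+D)\le-\sin D$ for $|a|\le D<\pi$). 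A bootstrap on $\max\{D_\theta(t),\,D_\theta(t)+m\dot D_\theta(t)\}$ started from $C_1(m,f^{in})$, together with analysis of the damped-oscillator inequality above — whose closure requires precisely the smallness hypothesis~(ii) — gives $\sup_{t\ge0}D_\theta(t)<\pi/2$ and $D_\omega(t)\le\mathcal O(1)e^{-\Lambda t}$ for some $\Lambda>0$. In particular $\mathrm{supp}\,f(t)$ remains in a fixed compact set for all $t$, so the construction above runs for every $T$ and part (1)'s existence is secured.

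For the uniform-in-time $W_2$-stability I would couple two solutions $f$ and $\tilde f$ through their flows (the estimate being cleanest, and all that is needed below, when $\tilde f^{in}$ is likewise confined): with $\pi_0$ optimal for $W_2(f^{in},\tilde f^{in})$ and $\pi_t:=(\Phi_t\times\tilde\Phi_t)\#\pi_0$, an admissible coupling of $f(t)$ and $\tilde f(t)$, one has $W_2(f(t),\tilde f(t))^2\le\int(|\theta-\tilde\theta|^2+|\omega-\tilde\omega|^2)\,d\pi_t$. Differentiating along $\pi_t$, the $\dot\theta$-equation gives the indefinite cross term $2(\theta-\tilde\theta)(\omega-\tilde\omega)$, friction gives the dissipative term $-\tfrac{2\gamma}m|\omega-\tilde\omega|^2$, and the interaction term $\tfrac{2\kappa}m(\omega-\tilde\omega)(G[f](\theta,t)-G[\tilde f](\tilde\theta,t))$, with $G[f](\theta,t):=\int\sin(\theta_*-\theta)\,df$, is split via $G[f](\theta,t)-G[\tilde f](\tilde\theta,t)=(G[f](\theta,t)-G[f](\tilde\theta,t))+(G[f](\tilde\theta,t)-G[\tilde f](\tilde\theta,t))$: the first bracket equals $-(\theta-\tilde\theta)\int\cos(\theta_*-\xi)\,df$ with $\int\cos(\theta_*-\xi)\,df\ge\cos(\sup_s D_\theta(s))>0$ by the confinement just obtained, and the second is bounded in modulus by $\int|\theta_*-\tilde\theta_*|\,d\pi_t$ since $\sin$ is $1$-Lipschitz and $\pi_t$ couples $f(t)$ with $\tilde f(t)$. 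Assembling these into a weighted functional $\mathcal E(t)=\alpha\int|\theta-\tilde\theta|^2d\pi_t+\beta\int(\theta-\tilde\theta)(\omega-\tilde\omega)d\pi_t+\int|\omega-\tilde\omega|^2d\pi_t$, with $\alpha,\beta$ chosen so that the associated quadratic form (whose entries involve $\gamma/m$, the positive lower bound on $\cos D_\theta$, and the exponentially small $D_\omega(t)$) is dissipative, Grönwall's inequality yields $\mathcal E(t)\le C\,\mathcal E(0)$ with $C$ independent of $t$, hence $W_2(f(t),\tilde f(t))\le C\,W_2(f^{in},\tilde f^{in})$; this proves part (2).

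Finally, part (1)'s uniform-in-time mean-field limit follows from part (2) with $\tilde f=f^N$, in the spirit of \cite{H-K-Z}: the empirical measure $f^N(t)=\tfrac1N\sum_i\delta_{(\theta_i(t),\omega_i(t))}$, built from the globally defined trajectories of \eqref{main-1}, is itself a measure-valued solution of \eqref{B-7}, and well-prepared samples satisfy $W_2(f^N(0),f^{in})\to0$ and inherit hypotheses (i)--(ii) for $N$ large, so the $t$-uniform estimate $W_2(f^N(t),f(t))\le C\,W_2(f^N(0),f^{in})$ gives $\limsup_{N\to\infty}\sup_{t\ge0}W_2(f^N(t),f(t))=0$. \emph{The main obstacle} is the uniform-in-time confinement and frequency-decay step: one must verify that the extremal points of $\mathrm{supp}\,f(t)$ are transported by the nonlocal characteristic field in such a way that the diameter inequalities above genuinely close, and that the ensuing bootstrap holds under exactly the threshold in~(ii); once the compact, $\omega$-shrinking support is secured, the contraction-mapping construction and the weighted-$W_2$ Grönwall estimate are essentially routine.
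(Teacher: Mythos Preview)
Your proposal is correct in spirit and lands on essentially the same mechanism as the paper: a confinement/trapping estimate followed by a weighted energy functional with a cross term to obtain uniform-in-time stability, and then the \cite{H-K-Z} strategy to pass to the mean-field limit. The difference is one of presentation rather than substance. The paper carries out the stability step entirely at the \emph{particle} level: it shows (Lemma~\ref{L6.1}, Proposition~\ref{P6.1}, Corollary~\ref{C6.1}) that for two $N$-particle solutions $\Theta,\tilde\Theta$ the functional $\mathcal E_\e(X,V)=\e\gamma\|X\|^2+2m\e\langle X,V\rangle+m\|V\|^2$ with $X=\Theta-\tilde\Theta$, $V=\Omega-\tilde\Omega$ decays exponentially, hence $\|X(t)\|+\|V(t)\|\le C(\|X^{in}\|+\|V^{in}\|)$ uniformly in $t$ and $N$; it then simply invokes \cite[Corollary~1.1]{H-K-Z} as a black box to obtain both existence and the uniform-in-time $W_2$ bound. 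You instead sketch the coupling directly at the kinetic level with an optimal-transport plan $\pi_t$ and the analogous weighted quadratic form in $(\theta-\tilde\theta,\omega-\tilde\omega)$; this is the continuum transcription of the same computation and is arguably more self-contained, while the paper's route is shorter because the particle estimate is already available from \cite{CHY1,CHLXY,CHN}.

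One technical caveat: your ``schematic'' system $\dot D_\theta\le D_\omega$, $m\dot D_\omega+\gamma D_\omega\le-\kappa\sin D_\theta$ is not quite the right object, because the extremal points of the $\omega$-projection of the support need not sit over the extremal points of the $\theta$-projection, so the second inequality does not close as written. The paper (via \cite{CHY1}) avoids this by working with $D_\theta$ alone as a second-order differential inequality $m\ddot D_\theta+\gamma\dot D_\theta\le(\cdots)$ and bootstrapping on $C_1=\max\{D_\theta,\,D_\theta+m\dot D_\theta\}$; you correctly flag this confinement step as the main obstacle, and once it is fixed along these lines your argument goes through.
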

We finally state our last theorem on the asymptotic behavior of measure valued solutions to \eqref{B-7}, whose existence is obtained in the previous theorem.

\begin{theorem}\label{T3.7} Let $f \in L^\infty([0,\infty);\pp_2(\T \times \R))$ be a measure valued solution to the equation \eqref{B-7} with initial data:
\bq\label{su_lt}
\frac12\int_{\T \times \R} w^2\, f^{in}(\theta,\omega)\,d\theta d\omega \leq \frac{\kappa R_k(0)^2}{2m}.
\eq
Then we have
\begin{enumerate}
\item
If $R(0) = 0$, then solution stays as a phase-locked state and
\[ R(t) = 0, \quad t > 0. \]
\item
If $R(0) > 0$, then the measure valued solution $d\mu_t = f(t)d\theta d\omega$ tends to a bi-polar state of type $(c_1,c_2)$ asymptotically.
\end{enumerate}

\end{theorem}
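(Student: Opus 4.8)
The plan is to reproduce, at the level of the kinetic equation \eqref{B-7}, the Lyapunov argument behind the particle result Theorem \ref{T3.4}. Write $K(t):=\tfrac12\int_{\T\times\R}\omega^2\,f(\theta,\omega,t)\,d\theta d\omega$ for the (unit-mass) kinetic energy and let $R(t)e^{{\mathrm i}\phi(t)}:=\int_{\T\times\R}e^{{\mathrm i}\theta}f(\theta,\omega,t)\,d\theta d\omega$ be the kinetic order parameter, so that $\mathcal{F}_a[f]=\tfrac1m(-\gamma\omega-\kappa R\sin(\theta-\phi))$. From the construction underlying Theorem \ref{T3.6} the solution is unique and, as in the kinetic analogue of Remark \ref{R2.1}, $\mathrm{supp}\,f(\cdot,\cdot,t)$ stays in a fixed compact set in $\omega$, which legitimizes the moment computations below. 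Differentiating the defining relation of $R$ along \eqref{B-7} gives $\dot R=-\int_{\T\times\R}\omega\sin(\theta-\phi)f\,d\theta d\omega$, and testing \eqref{B-7} against $\tfrac12\omega^2$ gives $\dot K=-\tfrac{2\gamma}{m}K-\tfrac{\kappa R}{m}\int_{\T\times\R}\omega\sin(\theta-\phi)f\,d\theta d\omega=-\tfrac{2\gamma}{m}K+\tfrac{\kappa}{2m}\tfrac{d}{dt}R^2$. Hence $\mathcal{L}(t):=K(t)-\tfrac{\kappa}{2m}R(t)^2$ is non-increasing with $\dot{\mathcal{L}}=-\tfrac{2\gamma}{m}K\le0$, and the hypothesis \eqref{su_lt} is exactly $\mathcal{L}(0)\le0$; therefore $K(t)\le\tfrac{\kappa}{2m}R(t)^2\le\tfrac{\kappa}{2m}$ for all $t\ge0$.

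For assertion (1): if $R(0)=0$ then $\mathcal{L}(0)\le0$ forces $K(0)=0$, so $f^{in}$ is carried by $\{\omega=0\}$. For such a measure $\omega f^{in}=0$ and, since $R(0)=0$, $\mathcal{F}_a[f^{in}]=-\tfrac{\gamma}{m}\omega$, so both transport terms in \eqref{B-7} vanish identically and $t\mapsto f^{in}$ solves \eqref{B-7}. By uniqueness of measure-valued solutions (cf.\ Theorem \ref{T3.6}), $f(t)=f^{in}$ for all $t$: no oscillator moves, the state is phase-locked, and $R(t)=R(0)=0$.

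For assertion (2) assume $R(0)>0$. Since $\mathcal{L}$ is non-increasing and bounded below by $-\tfrac{\kappa}{2m}$, we get $\int_0^\infty\!\int_{\T\times\R}\omega^2 f\,d\theta d\omega\,dt<\infty$; together with the uniform bound on $\int\omega^2 f$ and the boundedness of $\tfrac{d}{dt}\int\omega^2 f$ read off from \eqref{B-7}, a Barbalat-type lemma yields $\int_{\T\times\R}\omega^2 f(\theta,\omega,t)\,d\theta d\omega\to0$, the kinetic form of frequency synchronization. Next put $\mathcal{E}(t):=K(t)+\tfrac{\kappa}{2m}(1-R(t)^2)=\mathcal{L}(t)+\tfrac{\kappa}{2m}$, a non-increasing quantity with $\mathcal{E}(0)\le\tfrac{\kappa}{2m}$, so $\mathcal{E}(t)\le\tfrac{\kappa}{2m}$ and $\mathcal{E}(t)\downarrow\mathcal{E}_\infty\in[0,\tfrac{\kappa}{2m}]$; using $K(t)\to0$ this gives $R(t)^2\to R_\infty^2:=1-\tfrac{2m}{\kappa}\mathcal{E}_\infty$. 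If $R_\infty=0$ then $\mathcal{E}_\infty=\tfrac{\kappa}{2m}$, whence $\mathcal{E}\equiv\tfrac{\kappa}{2m}$, forcing $K\equiv0$ and $R\equiv0$, contradicting $R(0)>0$; so $R_\infty>0$, and in fact $R(t)>0$ for all $t$ (the same energy comparison rules out $R$ vanishing at any intermediate time), hence $R_*:=\inf_{t\ge0}R(t)>0$ by continuity. Finally, $\{f(\cdot,\cdot,t)\}_{t\ge0}$ is tight (compact $\theta$-support in $\T$, and $\int\omega^2 f(t)\to0$ controls the $\omega$-direction), so by LaSalle's invariance principle for the weakly continuous semiflow defined by \eqref{B-7}, with Lyapunov function $\mathcal{E}$, the $\omega$-limit set of $\mu_t$ consists of measures at energy level $\mathcal{E}_\infty$ whose forward orbit stays at that level, i.e.\ measures $\mu$ supported on $\{\omega=0\}$ with $R[\mu]=R_\infty$ and $R_\infty\sin(\theta-\phi[\mu])=0$ on $\mathrm{supp}\,\mu$; since $R_\infty>0$ this forces each such $\mu$ to be $c_1\delta_{(\psi,0)}+c_2\delta_{(\psi+\pi,0)}$ with $c_1+c_2=1$, $|c_1-c_2|=R_\infty$ — a bi-polar state of type $(c_1,c_2)=\big(\tfrac{1+R_\infty}{2},\tfrac{1-R_\infty}{2}\big)$.

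The main obstacle is promoting this to convergence of the whole trajectory: the $\omega$-limit set is the circle of bi-polar states parametrized by the cluster angle $\psi$, and LaSalle only gives that $\mu_t$ approaches this circle; one must show the center-of-phase $\phi(t)$ converges, not merely that $\dot\phi(t)\to0$. In the particle case this is exactly the content of Proposition \ref{P2.1} (a {\L}ojasiewicz-type gradient-flow convergence theorem), invoked after securing uniform-in-time boundedness of $\Theta$. Two routes seem available in the kinetic setting: (a) prove a {\L}ojasiewicz-type inequality $\mathcal{E}(t)-\mathcal{E}_\infty\lesssim K(t)$ near the manifold of bi-polar states with $R=R_\infty$ (the bound $R_*>0$ keeping us away from the degenerate $R=0$ stratum), which turns $\dot{\mathcal{E}}=-\tfrac{2\gamma}{m}K$ into exponential decay of $K$, hence $\dot\phi\in L^1$ and convergence of $\phi$; or (b) transfer Theorem \ref{T3.4}(3) through the uniform-in-time Wasserstein stability of Theorem \ref{T3.6}, approximating $f^{in}$ by empirical measures, the delicate point being that the discrete data must satisfy the admissibility inequality $\tfrac mN\sum_i|\omega_i^{in}|^2\le\kappa R_p(\Theta^{in})^2$ uniformly in $N$. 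Route (a) is the one I would pursue, and establishing that {\L}ojasiewicz inequality on $\pp_2(\T\times\R)$ is where the real work lies.
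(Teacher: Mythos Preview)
Your energy argument is correct and runs parallel to the paper's, though the paper works in Lagrangian coordinates via the characteristic flow $(q_t,p_t)$ and the pulled-back energies $\bar{\mathcal E}_K,\bar{\mathcal E}_P$. Your treatment of case~(1) is in fact cleaner than the paper's: the paper argues that $R_k(0)=0$ forces the $\omega$-support of $f^{in}$ to be null and then pushes this forward along characteristics, whereas you observe directly that $f^{in}$ is a stationary solution and invoke uniqueness.

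For case~(2), the two proofs diverge at the point where one needs convergence of $R$. You get $R(t)\to R_\infty$ elegantly from monotonicity of $\mathcal E=K+\tfrac{\kappa}{2m}(1-R^2)$ combined with $K\to 0$. The paper takes a longer route: it proves the \emph{time-integrability}
\[
\int_0^\infty\!\!\int_{\T\times\R}\sin^2\bigl(q_t(\theta,\omega)-\varphi_k(t)\bigr)\,f^{in}(d\theta\,d\omega)\,dt<\infty
\]
by deriving a Gronwall inequality for $\mathcal F(t):=\int|\gamma p_t+\kappa R_k\sin(q_t-\varphi_k)|^2 f^{in}$ (the kinetic analogue of Proposition~\ref{P4.2}), where the cross term is handled with Young's inequality so that the remainder is controlled by the time-integrable $\bar{\mathcal E}_K$. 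Cauchy--Schwarz in $t$ then gives $\dot R_k\in L^1(0,\infty)$, hence $R_k\to R^*>0$. This extra $L^1$-in-$t$ information is the paper's main technical addition over the particle proof; you do not derive it, but for the bare conclusion $R\to R_\infty$ you do not need it.

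On the remaining step --- passing from ``$R\to R_\infty>0$ and $K\to 0$'' to actual convergence of $\mu_t$ to a single bi-polar state --- you are more scrupulous than the paper. The paper simply invokes the equilibrium characterization (Proposition~\ref{P6.2}) after showing $R_k\to R^*$, without addressing the rotational degeneracy you flag. Note that the paper's $L^1$ estimate does \emph{not} directly resolve this either: $|R_k\dot\varphi_k|\le\bigl(\int\omega^2 f\bigr)^{1/2}$, and one only has $\bar{\mathcal E}_K\in L^1_t$, not $\sqrt{\bar{\mathcal E}_K}\in L^1_t$. So your Route~(a) (a \L ojasiewicz-type inequality near the bi-polar manifold giving exponential decay of $K$, hence $\dot\varphi\in L^1$) is a genuine strengthening beyond what the paper proves, and is the right place to push.
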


\begin{remark}
As can be seen in Section \ref{sec:6}, the above results are lifted from the corresponding particle results presented in Theorem \ref{T3.4} and Theorem \ref{T3.5}. This lifting of particle results via the mean-field limit has been used in earlier results \cite{CCHKK, CHN, HKMP} for the Kuramoto model with and without inertia. 
\end{remark}

%
%

%
%

\section{A Kuramoto ensemble with homogeneous inertia and friction}\label{sec:4}
\setcounter{equation}{0}
In this section, we study the asymptotic phase-locking of homogenous Kuramoto ensemble under setting A:
\[ m_i = m, \quad \gamma_i = 1, \quad \nu_i = 0, \quad \mbox{and} \quad a_{ij} =\frac{1}{N}, \quad 1 \leq i,j \leq N \]
In this situation, system \eqref{main} becomes 
\begin{equation} \label{D-1}
m \ddot{\theta}_i =-\dot{\theta}_i + \frac{\kappa}{N}\sum_{j=1}^N \sin(\theta_j - \theta_i),\quad  i =1,\cdots,N, \quad t > 0,
\end{equation}
or equivalently,
\begin{equation} \label{D-2}
m\ddot{\theta}_i = -\dot{\theta}_i -{\kappa} R_p \sin(\theta_i - \varphi_p),\quad  i =1,\cdots,N, \quad t > 0.
\end{equation}
In the following three subsections, we will provide the proof for three assertions in Theorem \ref{T3.4}.

\subsection{Emergence of CPS}  \label{sec:4.1}
In this subsection, we provide a proof of the first assertion of Theorem \ref{T3.4}. For this, we introduce kinetic and potential energies associated with \eqref{D-1}. 

For a solution $\{ (\theta_i, \omega_i := {\dot \theta}_i) \}$ to \eqref{D-1}, the kinetic energy ${\mathcal E}_K$ and the potential energy ${\mathcal E}_P$ are defined as follows.
\begin{align}
\begin{aligned} \label{D-3}
{\mathcal E}(\Theta, {\dot \Theta}) &:=  {\mathcal E}_K({\dot \Theta})  + {\mathcal E}_{P}(\Theta), \quad {\mathcal E}_K({\dot \Theta}) := \frac{m}{2} \sum_{i=1}^N |\omega_i|^2 \quad \mbox{and}\\
{\mathcal E}_{P}(\Theta) &:= \frac{\kappa}{2N}\sum_{i,j=1}^N(1 - \cos(\theta_j - \theta_i)) = \frac{\kappa N}{2} (1-R_p^2).   
\end{aligned}
\end{align}
Note that both the kinetic and the potential energies are nonnegative. Moreover, the potential energy is uniformly bounded:
\[ \label{D-4}
\sup_{0\leq t < \infty} {\mathcal E}_{P}(\Theta(t)) \leq \frac{\kappa N}{2}. 
\]
\begin{proposition}\label{P4.1} 
\emph{(Energy estimate)}
Let $\Theta := (\theta_1,\cdots,\theta_N)$ be a phase vector whose components are governed by \eqref{D-1}.  Then, the total energy satisfies a dissipation estimate:
\begin{equation} \label{D-5}
 \frac{d}{dt} {\mathcal E}(\Theta, {\dot \Theta}) =-  \frac{2\gamma}{m} {\mathcal E}_K({\dot \Theta}), \quad t > 0. 
\end{equation}
\end{proposition}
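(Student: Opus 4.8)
The plan is to differentiate the total energy $\mathcal{E}(\Theta,\dot\Theta) = \frac{m}{2}\sum_i |\omega_i|^2 + \frac{\kappa}{2N}\sum_{i,j}(1-\cos(\theta_j-\theta_i))$ along trajectories of \eqref{D-1} and collect terms. First I would compute $\frac{d}{dt}\mathcal{E}_K(\dot\Theta) = m\sum_i \omega_i \dot\omega_i$ and substitute the equation of motion $m\dot\omega_i = -\gamma\omega_i + \frac{\kappa}{N}\sum_j \sin(\theta_j-\theta_i)$ (with $\gamma=1$ in setting A, but I would keep $\gamma$ general as in \eqref{D-5}), giving $\frac{d}{dt}\mathcal{E}_K = -\gamma\sum_i|\omega_i|^2 + \frac{\kappa}{N}\sum_{i,j}\omega_i\sin(\theta_j-\theta_i)$.

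Next I would compute the time derivative of the potential energy. Using $\frac{d}{dt}(1-\cos(\theta_j-\theta_i)) = \sin(\theta_j-\theta_i)(\omega_j-\omega_i)$ and the symmetry $a_{ij}=a_{ji}$ (here $1/N$), the double sum telescopes: $\frac{d}{dt}\mathcal{E}_P(\Theta) = \frac{\kappa}{2N}\sum_{i,j}\sin(\theta_j-\theta_i)(\omega_j-\omega_i) = -\frac{\kappa}{N}\sum_{i,j}\omega_i\sin(\theta_j-\theta_i)$, where the last equality comes from relabeling the summation indices $i\leftrightarrow j$ and using that $\sin$ is odd. This cross term is exactly the negative of the coupling contribution appearing in $\frac{d}{dt}\mathcal{E}_K$, so the two cancel when added.

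Adding the two contributions yields $\frac{d}{dt}\mathcal{E}(\Theta,\dot\Theta) = -\gamma\sum_i|\omega_i|^2 = -\frac{2\gamma}{m}\,\mathcal{E}_K(\dot\Theta)$, which is \eqref{D-5}. There is essentially no serious obstacle here — it is a direct computation — but the one step that requires care is the index manipulation showing that the coupling term from the kinetic part and the term from the potential part cancel exactly; this is where one must use both the symmetry of the capacity matrix and the oddness of $\sin$, and it is easy to drop a factor of $\tfrac12$ or a sign if done carelessly. I would also note in passing the immediate consequence that $\mathcal{E}$ is nonincreasing and hence $\mathcal{E}_K$ is integrable in time, which is what will be needed downstream for the synchronization argument.
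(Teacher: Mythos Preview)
Your proposal is correct and follows essentially the same computation as the paper: multiply \eqref{D-6} by $\omega_i$, sum over $i$, and use the antisymmetry of $\sin(\theta_j-\theta_i)$ under $i\leftrightarrow j$ to recognize the coupling term as $-\tfrac{d}{dt}\mathcal{E}_P$. The only cosmetic difference is that the paper writes the whole identity in one chain \eqref{D-7} rather than computing $\tfrac{d}{dt}\mathcal{E}_K$ and $\tfrac{d}{dt}\mathcal{E}_P$ separately before adding, but the content is identical.
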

\begin{proof} It follows from \eqref{D-1} that 
\begin{equation} \label{D-6}
 m\dot{\omega}_i  = -\gamma \omega_i + \frac{\kappa}{N}\sum_{j=1}^N \sin(\theta_j - \theta_i). 
\end{equation} 
We multiply $\omega_i$ to \eqref{D-6} and sum it over all $i$ to obtain
\begin{align}
\begin{aligned} \label{D-7}
\frac{d}{dt}\sum_{i=1}^N \frac{m \omega_i^2}{2} &= -\gamma \sum_{i=1}^N \omega_i^2 + \frac{\kappa}{N}\sum_{i,j=1}^N\sin(\theta_j - \theta_i)\omega_i\cr
&= -\gamma \sum_{i=1}^N \omega_i^2 + \frac{\kappa}{2N}\sum_{i,j=1}^N\sin(\theta_j - \theta_i)(\omega_i - \omega_j)\cr
&= -\gamma \sum_{i=1}^N \omega_i^2 + \frac{\kappa}{2N}\frac{d}{dt}\sum_{i,j=1}^N\cos(\theta_j - \theta_i).
\end{aligned}
\end{align}
This yields the desired estimate.
\end{proof}
In the sequel, for notational simplicity, we suppress $\Theta$ and ${\dot \Theta}$ dependence in ${\mathcal E},~{\mathcal E}_K$ and ${\mathcal E}_P$, i.e.,
\[  
{\mathcal E}(t) := {\mathcal E}(\Theta(t), {\dot \Theta}(t)), \qquad {\mathcal E}_K(t) :=  {\mathcal E}_K({\dot \Theta}(t)),  \qquad {\mathcal E}_{P} (t):= {\mathcal E}_{P}(\Theta(t)). 
\]
As a direct corollary of Proposition \ref{P4.1}, we will show that the kinetic energy ${\mathcal E}_K$  vanishes asymptotically. 
\begin{corollary}\label{C4.1}
Let $\Theta := (\theta_1,\cdots,\theta_N)$ be a phase vector whose components are governed by \eqref{D-1} and satisfying the relationships:
\begin{equation} \label{D-7-1}
 \theta_c(0) = 0 \quad \mbox{and} \quad \omega_c(0) = 0. 
\end{equation}
Then, the kinetic energy satisfies 
\[ \sup_{0 \leq t < \infty} {\mathcal E}_K(t) \leq \max \lt\{ {\mathcal E}_K(0),  \frac{m^2\kappa^2 N}{4\gamma^2} \rt\} =: {\mathcal E}_K^{\infty}, \quad  \lim_{t \to \infty} {\mathcal E}_K(t) = 0. 
\]
\end{corollary}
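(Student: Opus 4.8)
The statement has two parts: a uniform-in-time upper bound for the kinetic energy and its decay to zero. Both will be extracted from the dissipation identity in Proposition \ref{P4.1}, supplemented by the elementary a priori frequency bound already recorded in Remark \ref{R2.1} (applied with $\nu_i = 0$, $\gamma_i = \gamma$, $a_{ij} = 1/N$).

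For the uniform bound I would work with the time derivative of $\mathcal{E}_K$ directly. Multiplying \eqref{D-6} by $\omega_i$ and summing over $i$ gives, after the same anti-symmetrization as in the proof of Proposition \ref{P4.1},
\[
\frac{d}{dt}\mathcal{E}_K = -\gamma \sum_{i=1}^N \omega_i^2 + \frac{\kappa}{2N}\sum_{i,j=1}^N (\omega_i - \omega_j)\sin(\theta_j - \theta_i).
\]
Estimating the coupling term by the Cauchy--Schwarz inequality, using $\sum_{i,j}(\omega_i - \omega_j)^2 \leq 2N\sum_i \omega_i^2$ and $\sum_{i,j}\sin^2(\theta_j - \theta_i) \leq N^2$, together with $\sum_i \omega_i^2 = 2\mathcal{E}_K/m$, produces a Riccati-type differential inequality
\[
\frac{d}{dt}\mathcal{E}_K \leq -\frac{2\gamma}{m}\mathcal{E}_K + C\sqrt{\mathcal{E}_K}, \qquad C = C(m,\kappa,N).
\]
From this one reads off that $\dot{\mathcal{E}}_K < 0$ whenever $\mathcal{E}_K$ exceeds the threshold appearing in the statement, so a standard barrier argument finishes the bound: if $\mathcal{E}_K(t_1) > \mathcal{E}_K^\infty$ for some $t_1$, take the last time $t_0 < t_1$ at which $\mathcal{E}_K$ equals $\mathcal{E}_K^\infty \geq \mathcal{E}_K(0)$, and contradict the sign of $\dot{\mathcal{E}}_K$ on $(t_0, t_1]$; hence $\sup_{t \geq 0}\mathcal{E}_K(t) \leq \mathcal{E}_K^\infty$. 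Alternatively, one may feed the explicit pointwise bound for $|\omega_i(t)|$ from Remark \ref{R2.1} into $\mathcal{E}_K = \frac{m}{2}\sum_i \omega_i^2$ and use convexity of $x \mapsto x^2$ to keep the convex-combination structure.

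For the decay I would integrate the dissipation identity \eqref{D-5}. Since $\mathcal{E} = \mathcal{E}_K + \mathcal{E}_P \geq 0$ and $\frac{d}{dt}\mathcal{E} = -\frac{2\gamma}{m}\mathcal{E}_K \leq 0$, the total energy decreases to some limit $\mathcal{E}_\infty \geq 0$, and integration yields
\[
\frac{2\gamma}{m}\int_0^\infty \mathcal{E}_K(t)\,dt = \mathcal{E}(0) - \mathcal{E}_\infty < \infty,
\]
so $\mathcal{E}_K \in L^1([0,\infty))$. To upgrade integrability to $\mathcal{E}_K(t) \to 0$ I would use a Barbalat-type argument: the displayed expression for $\frac{d}{dt}\mathcal{E}_K$ is bounded uniformly in time (the dissipative term because $\mathcal{E}_K$ is now known to be bounded, the coupling term again by $|\sin| \leq 1$ and Cauchy--Schwarz), hence $\mathcal{E}_K$ is uniformly continuous on $[0,\infty)$, and a nonnegative, uniformly continuous, integrable function must tend to $0$.

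The energy computations are routine; the one point that needs care is this last step, since $\mathcal{E}_K \in L^1$ by itself does not force a limit — that is exactly where the uniform bound on $\frac{d}{dt}\mathcal{E}_K$, ultimately the a priori frequency bound of Remark \ref{R2.1}, is used. I do not expect the normalization \eqref{D-7-1} to be needed for either conclusion; it fixes the co-rotating frame and is convenient for the later parts of Theorem \ref{T3.4}. The main (mild) obstacle is the bookkeeping of constants so that the threshold coming out of the Riccati inequality matches the stated $\mathcal{E}_K^\infty$.
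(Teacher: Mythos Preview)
Your proposal is correct and matches the paper's proof essentially line for line: the paper derives the same Riccati-type inequality \eqref{D-11}, extracts the uniform bound by the substitution $Z=\sqrt{\mathcal E_K}$ and Gronwall (your barrier argument is an equivalent phrasing), and then obtains the decay by integrating \eqref{D-5} to get $\mathcal E_K\in L^1$ and invoking Barbalat after bounding $|\dot{\mathcal E}_K|$ via \eqref{D-11} and the uniform bound. Your remark that the normalization \eqref{D-7-1} is not really needed is also correct --- the paper uses it only to write the equality $\sum_{i,j}|\omega_i-\omega_j|^2 = 2N\sum_i\omega_i^2$ in \eqref{D-9}, but the inequality $\leq$ (which is all that is used) holds without it.
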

\begin{proof} 
(i) Note that Lemma \ref{L2.1} and \eqref{D-7-1} yield
\begin{equation} \label{D-7-2}
\theta_c(t) = 0, \quad \omega_c(t) = 0, \quad t \geq 0. 
\end{equation}
On the other hand, it follows from \eqref{D-7} that 
\begin{equation} \label{D-8}
 \frac{d{\mathcal E}_K}{dt} + \frac{2\gamma}{m} {\mathcal E}_K = - \frac{\kappa}{2N}\sum_{i,j=1}^N\sin(\theta_i - \theta_j)(\omega_i - \omega_j).
\end{equation} 
First, note that the relation \eqref{D-7-2} implies
\begin{equation} \label{D-9}
 \sum_{i,j} |\omega_i - \omega_j|^2 = 2N \sum_{i} |\omega_i|^2 = 4N {\mathcal E}_K.
\end{equation}
Then, \eqref{D-9}, $|\sin^2(\theta_i - \theta_j)|^2 \leq1$, and Cauchy-Schwarz's inequality yield
\begin{align}
\begin{aligned} \label{D-10}
& \lt| \frac{\kappa}{2N}\sum_{i,j=1}^N\sin(\theta_i - \theta_j)(\omega_i - \omega_j) \rt| \\
& \hspace{1cm} \leq \frac{\kappa}{2N} \lt(  \sum_{i,j=1}^N \sin^2(\theta_i - \theta_j)  \rt)^{\frac{1}{2}} \cdot \lt(  \sum_{i,j=1}^N |\omega_i - \omega_j|^2 \rt)^{\frac{1}{2}} \\
& \hspace{1cm} \leq \kappa \sqrt{N} \sqrt{{\mathcal E}_K}.
\end{aligned}
\end{align}
We now combine \eqref{D-8} and \eqref{D-10} to obtain a differential inequality for ${\mathcal E}_K$:
\begin{equation} \label{D-11}
\lt| \frac{d{\mathcal E}_K}{dt} + \frac{2\gamma}{m} {\mathcal E}_K \rt| \leq \kappa \sqrt{N} \sqrt{{\mathcal E}_K}, \quad t > 0.
\end{equation}
Next, we set $Z := \sqrt{{\mathcal E}_K}$, and derive a Gronwall's inequality for $Z$:
\[ 
\frac{dZ}{dt} + \frac{\gamma}{m} Z \leq \frac{\kappa \sqrt{N}}{2}, \quad t > 0.
\]
This implies
\begin{equation} \label{D-12}
Z(t) \leq \lt( \sqrt{{\mathcal E}_0} -  \frac{m\kappa \sqrt{N}}{2\gamma} \rt)  e^{-\frac{\gamma t}{m}} +  \frac{m\kappa \sqrt{N}}{2\gamma}.
\end{equation}
We now consider two cases depending on the relative size between $\sqrt{{\mathcal E}_0}$ and $  (m\kappa \sqrt{N})/(2\gamma)$. \newline

\noindent $\bullet$ Case A: If $\sqrt{{\mathcal E}_0} > (m\kappa \sqrt{N})/(2\gamma)$, then we use $e^{-\frac{\gamma t}{m}} \leq 1$ in \eqref{D-12} to derive
\begin{equation} \label{D-13}
Z(t) \leq \sqrt{{\mathcal E}_K(0)}, \quad \mbox{i.e.,} \quad  {\mathcal E}_K(t) \leq {\mathcal E}_K(0). 
\end{equation}

\vspace{0.2cm}

\noindent $\bullet$ Case B: If $\sqrt{{\mathcal E}_0} \leq(m\kappa \sqrt{N})/(2\gamma)$, then we have
\begin{equation} \label{D-14}
Z(t) \leq \frac{m\kappa \sqrt{N}}{2\gamma}, \quad \mbox{i.e.,} \quad {\mathcal E}_K(t) \leq \frac{m^2\kappa^2 N}{4\gamma^2}.
\end{equation}
Finally, we combine \eqref{D-13} and \eqref{D-14} to derive a uniform bound of ${\mathcal E}_K$. \newline

\noindent (ii) We integrate \eqref{D-5} to obtain
\[
\int_0^t {\mathcal E}_K(s) ds \leq \frac{m}{2\gamma} {\mathcal E}(0) \quad \mbox{for} \quad t \geq 0. 
\]
In order to show that the above integrand tends to zero as $t \to \infty$, we have to show that the integrand is uniformly continuous (see Barbalat's lemma \cite{Ba}). In fact, we can see that 
the integrand has a finite derivative, i.e., it is Lipschitz continuous which implies the uniform continuity.  Note that \eqref{D-11} and the first result (i) yield
\[ 
 \lt| \frac{d{\mathcal E}_K}{dt} \rt| \leq \frac{2\gamma}{m} {\mathcal E}^{\infty}_K + \sqrt{N} \sqrt{{\mathcal E}^{\infty}_K}.
\]
Thus, we obtain the desired result. 
\end{proof}
\begin{remark}
Since $|\omega_i| \leq \sqrt{ \frac{2 {\mathcal E}_k(\Theta)}{m}}$,  the decay of kinetic energy yields 
\[ 
\lim_{t \to \infty} |\omega_i| = 0. 
\]
Thus
\[
\lim_{t \to \infty} |\omega_i(t) - \omega_j(t)| = 0, 
\]
which verifies the first assertion of Theorem \ref{T3.4}. 
\end{remark}
Next, we show that the derivative of frequency, i.e., angular acceleration ${\ddot \theta}_i$ tends to zero as $t \to \infty$ without decay rate. For this, we first recall a generalized Gronwall type lemma as follows.

\begin{lemma}\label{L4.1} 
\emph{\cite{C-H-H-J-K}}
Let $y = y(t)$ be a nonnegative $\mc^1$-function satisfying the following differential inequality:
\begin{equation} \label{B-6}
\dot{y}(t) + \alpha y(t) = \beta(t), \quad \forall~ t > 0, \qquad y(0) = y^{in},
\end{equation}
where $\alpha$ is a positive constant and $\beta$ is a bounded continuous function decaying to zero as $t \to \infty$. Then, $y$ satisfies
\[
y(t) \leq y^{in} e^{-\alpha t} + \frac1\alpha \max_{s \in [t/2,t]}|\beta(s)| + \frac{\|\beta\|_{L^\infty}}{\alpha} e^{-\frac{\alpha t}{2}} \qquad \forall t \geq 0.
\]
In particular, $y$ tends to zero as $t \to \infty$. 
\end{lemma}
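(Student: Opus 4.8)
The plan is to solve the linear ODE \eqref{B-6} explicitly via the integrating factor and then dissect the resulting Duhamel integral. Multiplying \eqref{B-6} by $e^{\alpha t}$ and integrating from $0$ to $t$ (using $y(0) = y^{in}$) yields the exact representation
\[
y(t) = y^{in} e^{-\alpha t} + \int_0^t e^{-\alpha(t-s)} \beta(s)\, ds .
\]
Since $y$ is nonnegative, the left-hand side is just $y(t)$, and the whole task reduces to bounding the convolution term. The guiding idea is that the ``recent'' contribution (for $s$ near $t$) is controlled by the current smallness of $\beta$, while the ``old'' contribution (for $s$ near $0$) is suppressed by the exponential damping factor $e^{-\alpha(t-s)}$.

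Accordingly, I would split $\int_0^t = \int_0^{t/2} + \int_{t/2}^t$. On $[0,t/2]$ one has $e^{-\alpha(t-s)} \le e^{-\alpha t/2}\, e^{-\alpha(t/2-s)}$, so this piece is bounded by
\[
\|\beta\|_{L^\infty}\, e^{-\alpha t/2} \int_0^{t/2} e^{-\alpha(t/2-s)}\, ds \le \frac{\|\beta\|_{L^\infty}}{\alpha}\, e^{-\alpha t/2}.
\]
On $[t/2,t]$ one replaces $\beta(s)$ by its supremum over that window and uses $\int_{t/2}^t e^{-\alpha(t-s)}\, ds = \frac{1 - e^{-\alpha t/2}}{\alpha} \le \frac{1}{\alpha}$, which gives the bound $\frac{1}{\alpha}\max_{s\in[t/2,t]}|\beta(s)|$. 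Adding these two estimates to the homogeneous term $y^{in} e^{-\alpha t}$ produces exactly the claimed inequality.

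For the concluding assertion that $y(t)\to 0$, I would argue with $\varepsilon$ and $T$: given $\varepsilon>0$, choose $T$ so that $|\beta(s)|\le\varepsilon$ for all $s\ge T$; then for $t\ge 2T$ the middle term is at most $\varepsilon/\alpha$ (since $s\in[t/2,t]$ and $t\ge 2T$ force $s\ge T$), while the two exponential terms tend to $0$ as $t\to\infty$. Hence $\limsup_{t\to\infty} y(t)\le\varepsilon/\alpha$, and letting $\varepsilon\downarrow 0$ gives $y(t)\to 0$. There is no real obstacle in this lemma; the only points requiring mild care are the bookkeeping of the constant $1/\alpha$ in the two integral estimates and the elementary observation just mentioned about the localization of the window $[t/2,t]$.
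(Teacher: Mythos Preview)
Your proof is correct and follows essentially the same approach as the paper: the integrating-factor representation followed by splitting the Duhamel integral at $t/2$, bounding the early piece by $\|\beta\|_{L^\infty}$ times an exponential decay and the late piece by the local maximum of $|\beta|$. Your $\varepsilon$--$T$ argument for the final limit is slightly more explicit than the paper's, but the content is identical.
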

\begin{proof} Although the proof can be found in Appendix A of \cite{C-H-H-J-K}, we briefly present its proof here for the reader's convenience.  
We multiply integrating factor $e^{\alpha t}$ to \eqref{B-6} to obtain
\[ y(t) \leq y^{in} e^{-\alpha t} + \int_0^t e^{-\alpha(t-s)} \beta(s) ds. \]
Then, using the decay property of $h$ we get
\begin{align*}
\begin{aligned}
y(t) &\leq y^{in} e^{-\alpha t} + e^{-\alpha t}\lt(\int_0^{t/2} + \int_{t/2}^t \rt)\beta(s)e^{\alpha s}\,ds\cr
&\leq y^{in} e^{-\alpha t} + \|\beta\|_{L^\infty}e^{-\alpha t}\int_0^{t/2} e^{\alpha s}\,ds + e^{-\alpha t}\max_{s \in [t/2,t]}|\beta(s)|\int_{t/2}^t e^{\alpha s}\,ds\cr
&\leq y^{in} e^{-\alpha t} + \frac{1}{\alpha} \max_{s \in [t/2,t]}|\beta(s)| + \frac{\|\beta\|_{L^\infty}}{\alpha} e^{-\frac{\alpha t}{2}}  \to 0 \quad \mbox{as} \quad t \to \infty.
\end{aligned}
\end{align*}
\end{proof}
Now, we are ready to show that ${\dot \omega}_i$ tends to zero as $t \to \infty$, which will be used crucially in the proof of second part in Theorem \ref{T3.1} in next subsection. Recall that ${\dot \omega}_i$ satisfies 
\begin{equation} \label{D-15-1}
\dot{\omega}_i = - \frac{1}{m} \Big[ \gamma \omega_i + {\kappa} R_p \sin(\theta_i - \varphi_p) \Big]\quad \mbox{for } i =1,\cdots,N \mbox{ and } t > 0.
\end{equation}
\begin{proposition}\label{P4.2}
Let $\Theta := (\theta_1,\cdots,\theta_N)$ be a phase vector whose components are governed by \eqref{D-1} and satisfy the zero sum conditions $\theta_c(t) = 0$ and $\omega_c(t) = 0.$
Then, we have
\[ \lim_{t \to \infty} |{\dot \omega}_i(t)|  = 0, \quad 1 \leq i \leq N. \]
\end{proposition}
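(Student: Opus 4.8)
The plan is to differentiate the frequency equation once more in time, view the result as a scalar linearly damped ODE whose forcing term decays to zero, and then conclude via (a differential-inequality version of) Lemma \ref{L4.1}.

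Concretely, I start from \eqref{D-6}, i.e. $m\dot\omega_i=-\gamma\omega_i+\frac{\kappa}{N}\sum_{j=1}^N\sin(\theta_j-\theta_i)$, and differentiate in $t$ (legitimate since the right-hand side of \eqref{main-1} is smooth, so solutions are $\mathcal C^\infty$), obtaining
\[
m\ddot\omega_i+\gamma\dot\omega_i=\beta_i(t),\qquad
\beta_i(t):=\frac{\kappa}{N}\sum_{j=1}^N\cos(\theta_j-\theta_i)\,(\omega_j-\omega_i).
\]
By Corollary \ref{C4.1}, the hypotheses $\theta_c(t)=\omega_c(t)=0$ yield ${\mathcal E}_K(t)\to 0$ and $\sup_{t\ge 0}{\mathcal E}_K(t)\le{\mathcal E}_K^\infty<\infty$; since $|\omega_i(t)|\le\sqrt{2{\mathcal E}_K(t)/m}$, this forces $\omega_i(t)\to 0$ for every $i$, together with a uniform bound $\sup_{t\ge 0}\max_i|\omega_i(t)|=:\Omega^\infty<\infty$. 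Hence each $\beta_i$ is continuous, satisfies $|\beta_i(t)|\le 2\kappa\Omega^\infty$, and $\beta_i(t)\to 0$ as $t\to\infty$.

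Next, multiplying the displayed identity by $2\dot\omega_i/m$ and setting $y_i:=\dot\omega_i^2$, Young's inequality gives
\[
\dot y_i+\frac{2\gamma}{m}y_i=\frac{2}{m}\dot\omega_i\beta_i\le\frac{\gamma}{m}y_i+\frac{1}{\gamma m}\beta_i^2,
\qquad\mbox{hence}\qquad
\dot y_i+\frac{\gamma}{m}y_i\le\frac{1}{\gamma m}\beta_i^2=:\tilde\beta_i(t),
\]
where $\tilde\beta_i\ge 0$ is continuous, bounded, and tends to $0$. Since the proof of Lemma \ref{L4.1} relies only on the integrating-factor bound $y(t)\le y(0)e^{-\alpha t}+\int_0^t e^{-\alpha(t-s)}\beta(s)\,ds$, it applies verbatim to this differential inequality with $\alpha=\gamma/m$; therefore $y_i(t)\to 0$, i.e. $\dot\omega_i(t)\to 0$, for each $i=1,\dots,N$.

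The step requiring the most care is the decay of $\beta_i$: a priori only the \emph{velocity differences} $\omega_j-\omega_i$ inside $\beta_i$ need to vanish, and this is exactly what Corollary \ref{C4.1} delivers (through $\omega_i\to 0$), so the zero-sum normalization enters in an essential way. Everything else is a routine damped-ODE argument; one could alternatively run the same computation on $|\dot\omega_i|$, which satisfies $\frac{d}{dt}|\dot\omega_i|+\frac{\gamma}{m}|\dot\omega_i|\le\frac{1}{m}|\beta_i|$ for a.e. $t$, or work from \eqref{D-15-1} instead of \eqref{D-6}, the latter costing the extra bookkeeping that $\dot R_p=-\frac{1}{N}\sum_j\omega_j\sin(\theta_j-\varphi_p)\to 0$ and $R_p\dot\varphi_p=\frac{1}{N}\sum_j\omega_j\cos(\theta_j-\varphi_p)\to 0$.
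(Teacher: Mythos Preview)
Your proof is correct and follows essentially the same approach as the paper: differentiate the frequency equation once, recognize the result as a linearly damped ODE with forcing $\beta_i(t)=\frac{\kappa}{N}\sum_j\cos(\theta_j-\theta_i)(\omega_j-\omega_i)$ that decays by Corollary~\ref{C4.1}, and conclude via Lemma~\ref{L4.1}. The only cosmetic difference is that the paper packages the argument through the aggregate functional $\mathcal{F}(t)=\frac{1}{2}\sum_i|\gamma\omega_i+\kappa R_p\sin(\theta_i-\varphi_p)|^2=\frac{m^2}{2}\sum_i\dot\omega_i^2$ (written in order-parameter form) and bounds the cross term $\mathcal{I}_1$ directly in terms of $\mathcal{E}_K$, whereas you work component-by-component on $y_i=\dot\omega_i^2$ and use Young's inequality to absorb half of the cross term into the damping; both routes land on the same Gronwall step.
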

\begin{proof} From the relation \eqref{D-15-1}, it suffices to show that the right hand side tends to zero. For this, we set 
\begin{equation} \label{D-16}
{\mathcal F}(t) := \frac{1}{2} \sum_{i=1}^N| \gamma \omega_i + \kappa R_p \sin(\theta_i - \varphi_p)|^2. 
\end{equation}
Next, we will derive a Gronwall type differential inequality for ${\mathcal F}$. For this, we use \eqref{B-5} and \eqref{D-15-1} to get
\begin{align}
\begin{aligned} \label{D-17}
&\frac{d}{dt}\lt(\gamma \omega_i + \kappa R_p \sin(\theta_i - \varphi_p) \rt)  \\
& \hspace{1cm} = \gamma {\dot \omega}_i + \kappa \frac{d}{dt} \Big[ R_p \sin(\theta_i - \varphi_p) \Big] \\
& \hspace{1cm} = - \frac{\gamma}{m} \Big[ \gamma \omega_i + {\kappa} R_p \sin(\theta_i - \varphi_p) \Big] + \frac{\kappa}{N} \sum_{j =1}^{N} \cos(\theta_i - \theta_j) (\omega_i - \omega_j).
\end{aligned}
\end{align}
It follows from \eqref{D-16} and \eqref{D-17} that we have
\begin{align}
\begin{aligned} \label{D-18}
\frac{d{\mathcal F}}{dt} &= \frac{1}{2}\frac{d}{dt}\sum_{i=1}^N| \gamma \omega_i + \kappa R_p \sin(\theta_i - \varphi_p)|^2 \\
 &=\sum_{i=1}^N \lt( \gamma \omega_i + \kappa R_p \sin(\theta_i - \varphi_p) \rt)\frac{d}{dt}\lt( \gamma \omega_i + \kappa R_p \sin(\theta_i - \varphi_p) \rt) \\
 &=   - \frac{\gamma}{m} \sum_{i=1}^N | \gamma \omega_i + {\kappa} R_p \sin(\theta_i - \varphi_p)|^2 \\
 & + \frac{\kappa}{N} \sum_{i, j =1}^{N} \lt( \gamma \omega_i + \kappa R_p \sin(\theta_i - \varphi_p) \rt) \cos(\theta_i - \theta_j) (\omega_i - \omega_j) \\
 &=:   - \frac{2 \gamma}{m} {\mathcal F} + {\mathcal I}_{1}.
\end{aligned}
\end{align}
We further refine the term ${\mathcal I}_1$ as follows. 
\begin{align}
\begin{aligned} \label{D-19}
 {\mathcal I}_{1} &=\frac{\kappa \gamma}{N} \sum_{i,j} \cos(\theta_i - \theta_j) \omega_i (\omega_i - \omega_j) + \frac{\kappa^2 R_p}{N} \sum_{i,j} \sin(\theta_i - \varphi) \cos(\theta_i - \theta_j) (\omega_i - \omega_j) \\
&= -\frac{\kappa \gamma}{2N} \sum_{i,j} \cos(\theta_i -\theta_j) |\omega_i - \omega_j|^2 + \frac{\kappa^2 R_p}{N} \sum_{i,j} \sin(\theta_i - \varphi) \cos(\theta_i - \theta_j) (\omega_i - \omega_j) \\
& =: {\mathcal I}_{11} + {\mathcal I}_{12}.
\end{aligned}
\end{align}
Below, we estimate the terms ${\mathcal I}_{1i},~i=1,~2$ separately. 

\vspace{0.5cm}

\noindent $\diamond$ (Estimate of ${\mathcal I}_{11}$): We use \eqref{D-9} to obtain
\begin{equation} \label{D-20}
 | {\mathcal I}_{11} | \leq 2 \kappa \gamma {\mathcal E}_K. 
\end{equation} 

\noindent $\diamond$ (Estimate of ${\mathcal I}_{12}$): We use \eqref{D-9} to obtain
\begin{equation} \label{D-21}
 | {\mathcal I}_{12} | \leq \frac{\kappa^2 R_p}{N} \sum_{i,j} |\omega_i - \omega_j| \leq  \kappa^2 R_p \Big( \sum_{i,j} |\omega_i - \omega_j|^2 \Big)^{\frac{1}{2}}  \leq 2 \kappa^2 R_p \sqrt{N }\sqrt{{\mathcal E}_K}.
\end{equation} 
In \eqref{D-18}, we combine \eqref{D-19}, \eqref{D-20}, and \eqref{D-21} to obtain
\begin{equation} \label{D-22}
\frac{d{\mathcal F}}{dt} \leq -\frac{2\gamma}{m} {\mathcal F} + 2 \kappa \gamma {\mathcal E}_K  +  2 \kappa^2 R_p \sqrt{N }\sqrt{{\mathcal E}_K}, \quad t > 0.
\end{equation}
We apply Lemma \ref{L4.1}  for \eqref{D-22} with 
\[ \alpha = \frac{2\gamma}{m} \quad \mbox{and} \quad \beta(t) :=  2 \kappa^2 R_p \sqrt{N }\sqrt{{\mathcal E}_K} \]
to obtain the desired zero convergence of ${\mathcal F}$ as $t \to \infty$. 
\end{proof}
Suppose that coupling strength and initial data $(\Theta^{in}, {\dot \Theta}^{in})$ satisfy
\begin{equation} \label{D-26}
\kappa R_p^2(0) \geq \frac{m}{N} \sum_{i=1}^N |\omega^{in}_i|^2 = \frac{2}{N} {\mathcal E}_K(0), \qquad R_p(0) := R_p(\Theta^{in}).
\end{equation}
Depending on $R_p^0$, we consider the following two cases:
\[ \mbox{Either}~~R_p(0) = 0 \quad \mbox{or} \quad R_p(0) > 0. \]

\subsection{Asymptotic phase-locking I} \label{sec:4.2} We next study the relaxation dynamics of the phase vector $\Theta$.  Suppose that 
\[ R_p(0) = 0. \]
In this case, thank to the relations \eqref{B-5} and \eqref{D-26}, we have
\[ \omega_i^{in} = 0, \quad  \sum_{j=1}^N \sin(\theta_j^{in}  - \theta_i^{in}) = 0, \quad  1 \leq i \leq N. \]
It follows from \eqref{D-1} that
\[ {\dot  \theta}_i(0+) = 0,  \quad  {\dot \omega}_i(0+) = -\frac{1}{m} \Big[ \gamma \omega_i^{in} - \frac{\kappa}{N}\sum_{j=1}^N \sin(\theta_j^{in} - \theta_i^{in}) \Big] = 0. \]
Then, by the uniqueness of ODE theory, 
\[ \theta_i(t) = \theta_i^{in} \quad \mbox{for} \quad t \geq 0. \]
Thus, we have
\[ R_p(t) = R_p(0) = 0. \quad \mbox{for all~~$t \geq 0$.} \] 
By Proposition \ref{P3.1}, $\Theta(t)$ is a phase-locked state. 

\subsection{Asymptotic phase-locking II} \label{sec:4.3}  Suppose that 
\begin{equation} \label{D-26-1}
 R_p(0) > 0.
\end{equation} 
In this case, we claim:  there exists an emergent phase-locked state $\Theta^{\infty}$ issued from the initial data $R_0$ along the Kuramoto flow, i.e., $\Theta^{\infty}$ is a $(N-k, k)$-type phase-locked state. It follows from Proposition \ref{P4.2} that there exists a bounding function $g(t)$ such that
\[ \sum_{i=1}^N| \gamma \omega_i + \kappa R_p \sin(\theta_i - \varphi_p)|^2 \leq g(t) \quad \mbox{and} \quad  \lim_{t \to \infty} g(t)= 0. \]

\vspace{0.2cm}

\noindent $\diamond$ Step A: First, we show
\begin{equation} \label{D-27}
 \lim_{t \to \infty} R_p^2(t) \sin^2(\theta_i(t) - \varphi_p(t)) = 0, \quad 1 \leq i \leq N. 
\end{equation} 
It follows from Corollary \ref{C4.1} and Proposition \ref{P4.2} that we have
\begin{align*}
\begin{aligned} 
& \kappa^2 \sum_{i=1}^N R_p^2\sin^2(\theta_i - \varphi_p) = \sum_{i=1}^N|\gamma \omega_i + \kappa R_p \sin(\theta_i - \varphi_p) - \gamma \omega_i |^2 \\
& \hspace{0.5cm} \leq 2\sum_{i=1}^N|\omega_i + \kappa R_p \sin(\theta_i - \varphi_p)|^2 + 2\gamma \sum_{i=1}^N \omega_i^2 \leq 2g(t) + \frac{4\gamma}{m} {\mathcal E}_K(t) \to 0 \quad \mbox{as} \quad t \to \infty.
\end{aligned}
\end{align*}
This establishes \eqref{D-27}. 

\vspace{0.2cm}

\noindent $\diamond$ Step B: we claim that there exists a $R_* > 0$ such that 
\[  R_p(t) \geq R_* \quad \mbox{for all $t \geq 0$}. \]
Suppose not, i.e., there exists $t_0 \in (0, \infty]$ such that 
\begin{equation} \label{D-29}
\lim_{t \to t_0 -} R_p(t) = 0. 
\end{equation}
We integrate the energy relation \eqref{D-5} from $0$ to $t$ to obtain
\begin{equation} \label{D-29-1}
{\mathcal E}_K(t) + \frac{\kappa N}{2} (1-R_p^2(t)) + \frac{2}{m}  \int_0^t {\mathcal E}_K(s) ds = {\mathcal E}_K(0) + \frac{\kappa N}{2} (1-R_p^2(0)), \quad t \geq 0. 
\end{equation}
Letting $t \to t_0-$ in \eqref{D-29-1} and using the relation \eqref{D-3}, we find that 
\begin{align}
\begin{aligned} \label{D-30}
{\mathcal E}_K(0) + \frac{\kappa N}{2} (1-R_p^2(0)) &=    {\mathcal E}_K(t_0) + \frac{\kappa N}{2} (1-R_p^2(t_0)) + \frac{2}{m} \int_0^{t_0} {\mathcal E}_K(s) ds \\
&=  {\mathcal E}_K(t_0) +  \frac{\kappa N}{2} + \frac{2}{m} \int_0^{t_0} {\mathcal E}_K(s) ds. 
\end{aligned}
\end{align}
On the other hand, it follows from the assumption \eqref{D-26} that the R.H.S. of \eqref{D-30} can be estimated as follows.
\begin{equation} \label{D-31}
\frac{\kappa N}{2} \geq \frac{\kappa N}{2}  + {\mathcal E}_K(0) - \frac{\kappa N}{2} R_p^2(0).
\end{equation}
We combine \eqref{D-30} and \eqref{D-31} to obtain
\begin{equation} \label{D-32}
0 \geq  {\mathcal E}_K(t_0) + \frac{2}{m} \int_0^{t_0} {\mathcal E}_K(s) ds. 
\end{equation}
Since ${\mathcal E}_K(t)$  is $\mc^1$ and non-negative, the relation \eqref{D-32} yields
\begin{equation} \label{D-33}
  {\mathcal E}_K(t) = 0, \quad \forall~t \in [0,t_0]. 
\end{equation} 
Finally, we combine \eqref{D-29}, \eqref{D-29-1} and \eqref{D-33} to obtain
\[
0 = R_p(t_0)= R_p(0), \quad t \in [0, t_0], 
\]
which is contradictory to the assumption \eqref{D-26-1}. Hence, there exists a constant $R_* > 0$ such that 
\[ R_p(t) \geq R_* > 0, \quad t \geq 0. \]

\noindent $\diamond$ Step C: It follows from \eqref{D-2} that we have
\[ m\dot{\omega}_i = -\gamma \omega_i -{\kappa} R_p \sin(\theta_i - \varphi_p). \]
Then, we let $t \to \infty$ and use $R_p > R_* > 0$, the second result in Corollary \ref{C4.1}, and Proposition \ref{P4.2} to obtain
\[  \lim_{t \to \infty} \sin(\theta_i(t) - \varphi_p(t)) = 0 \quad \mbox{for} \quad 1 \leq i \leq N.  \]
Since the sinusoidal function has isolated zeros, we get 
\bq\label{D-34}
\theta_i(t) - \varphi(t) \to k_i \pi \quad \mbox{for some} \quad k_i \in \mathbb{Z}.
\eq
On the other hand, it follows from \eqref{D-34} that
\[
\varphi(t) = -\frac1N \sum_{i=1}^N(\theta_i(t) - \varphi(t)) \to - \frac1N\sum_{i=1}^N k_i\pi \quad \mbox{as} \quad t \to \infty,
\]
i.e., $\varphi(t)$ converges to some constant $\varphi^*$ as $t$ goes to infinity. This implies that
\[
\theta_i(t) = \theta_i(t) - \varphi(t) + \varphi(t) \to k_i\pi + \varphi^* \quad \mbox{as} \quad t \to \infty.
\]
The relation \eqref{B-4-1} yields
\[
R_p(t) = \frac1N\sum_{j=1}^N\cos(\theta_j(t) - \varphi_p(t)) \to \frac1N\sum_{j=1}^N\cos(k_j\pi) =: R^* > 0.
\]
This completes the proof.
\begin{remark} It follows from \eqref{D-34} that
\[
\max_{1\leq i,j \leq N}|\theta_i(t) - \theta_j(t)| \leq 2\max_{1\leq i \leq N}|\theta_i(t) - \varphi(t)| \leq C\max_{1 \leq i \leq N} k_i, \quad t \gg 1,
\]
where $k_i$ is defined in \eqref{D-34}. Then, by the second-order gradient-like structure of \eqref{D-1}, we can also show that convergence of $\Theta$ to the phase-locked solution $\Theta^\infty$ of \eqref{D-1}.
\end{remark}
%
%

%
%

\section{A Kuramoto ensemble with heterogeneous inertia and friction} \label{sec:5}
\setcounter{equation}{0}
In this section, we study the emergence of asymptotic phase-locking for the heterogeneous Kuramoto ensemble on symmetric network under the effect of inhomogeneous inertia and zero natural frequency:
\begin{equation}\label{F-1}
m_i\ddot{\theta}_i + \gamma_i\dot{\theta}_i =  \kappa \sum_{j=1}^Na_{ij} \sin(\theta_j - \theta_i)\quad \mbox{for } i =1,\cdots,N \mbox{ and } t > 0.
\end{equation}
This system appears in the modeling of a lossless network-reduced power system \cite{CL}. Here, we assume $a_{ij} = a_{ji} \geq 0$. We refer the reader to \cite{CHLXY, CL} for more detailed discussion on this system. 

Note that \eqref{F-1} can be rewritten as a system of the first-order ODEs:
\begin{align*}
\begin{aligned}
{\dot \theta}_i &= \omega_i, \quad i=1, 2, \dots, N, \quad t > 0, \\
{\dot \omega}_i &= \frac{1}{m_i} \left(  -\gamma_i\omega_i +  \kappa \sum_{j=1}^{N}
a_{ij}\sin(\theta_j - \theta_i) \right).
\end{aligned}
\end{align*}
As mentioned in Section \ref{sec:2.3}, system \eqref{F-1} can be rewritten as
\[
m_i\ddot{\theta}_i = -\gamma_i\dot{\theta}_i - \kappa R_{p,i} \sin(\theta_i - \varphi_{p,i})\quad \mbox{for } i =1,\cdots,N \mbox{ and } t > 0.
\]
In the following two subsections, we proceed arguments as in Section \ref{sec:4}. 

\subsection{Emergence of CSP}  \label{sec:5.1}
In this subsection, we provide the proof of the first assertion in Theorem \ref{T3.5} on the emergence of complete synchronization. Note that once we prove that
\begin{equation} \label{F-4-1} 
\lim_{t \to \infty} |\omega_i(t)| = 0 \quad \mbox{for} \quad i =1, \cdots, N, 
\end{equation}
then we obtain the complete synchronization estimate
\[ \lim_{t \to \infty} |\omega_i(t) - \omega_j(t)| \leq  \lim_{t \to \infty} |\omega_i(t)| +  \lim_{t \to \infty} |\omega_j(t)| = 0 \quad \mbox{for} \quad 1 \leq i, j \leq N. \]
Thus, our main goal is to verify the estimate \eqref{F-4-1} using the energy method. For a solution $\{ (\theta_i, \omega_i := {\dot \theta}_i) \}$ to \eqref{F-1} we define the kinetic and the potential energies as follows.
\begin{align*}
\begin{aligned} 
{\tilde {\mathcal E}}(\Theta, {\dot \Theta}) &:=  {\tilde {\mathcal E}}_K({\dot \Theta})  + {\tilde {\mathcal E}}_{P}(\Theta), \quad {\tilde {\mathcal E}}_K({\dot \Theta}) := \frac{1}{2} \sum_{i=1}^N m_i |\omega_i|^2, \\
{\tilde {\mathcal E}}_{P}(\Theta) &:= \frac{\kappa}{2}\sum_{i,j=1}^N a_{ij} (1 - \cos(\theta_i - \theta_j)).  
\end{aligned}
\end{align*}
Note that in the homogeneous case and all-to all network topology, i.e., $m_i = m$ and $a_{ij} = \frac{1}{N}$, the above energies reduce to the one defined in \eqref{D-3}.  For notational simplicity, we suppress $\Theta$ and ${\dot \Theta}$ dependence in
 ${\tilde {\mathcal E}}_K$ and ${\tilde {\mathcal E}}_P$, i.e.,
\[  {\tilde {\mathcal E} }:= {\tilde {\mathcal E}}(\Theta, {\dot \Theta}), \qquad {\tilde {\mathcal E}}_K := {\tilde  {\mathcal E}}_K({\dot \Theta}),  \qquad {\tilde  {\mathcal E}}_{P} :={\tilde {\mathcal E}}_{P}(\Theta). \]

\noindent In the following Lemma, we compute the time derivative of the total energy. 
\begin{proposition}\label{P5.1} 
Let $\Theta := (\theta_1,\cdots,\theta_N)$ be a phase vector whose components are a solution to \eqref{F-1} with $\nu_i = 0$ for all $i$ in $\{1,\cdots,N\}$. Then, we have
\[ \label{F-6}
\frac{d}{dt}  {\tilde {\mathcal E}}(\Theta, {\dot \Theta}) =  -\sum_{i=1}^N \gamma_i\omega_i^2, \quad t > 0.
\]
\end{proposition}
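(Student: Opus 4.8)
The plan is to differentiate the total energy $\tilde{\mathcal E}=\tilde{\mathcal E}_K+\tilde{\mathcal E}_P$ along trajectories of \eqref{F-1} and show that the two coupling contributions cancel exactly, leaving only the friction dissipation. First I would handle the kinetic part: writing \eqref{F-1} as $m_i\dot\omega_i=-\gamma_i\omega_i+\kappa\sum_j a_{ij}\sin(\theta_j-\theta_i)$, multiplying by $\omega_i$ and summing over $i$ gives
\[
\frac{d}{dt}\tilde{\mathcal E}_K=-\sum_{i=1}^N\gamma_i\omega_i^2+\kappa\sum_{i,j=1}^N a_{ij}\sin(\theta_j-\theta_i)\,\omega_i .
\]

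Next I would differentiate the potential part. Since $\frac{d}{dt}\bigl(1-\cos(\theta_i-\theta_j)\bigr)=\sin(\theta_i-\theta_j)(\omega_i-\omega_j)$, we obtain
\[
\frac{d}{dt}\tilde{\mathcal E}_P=\frac{\kappa}{2}\sum_{i,j=1}^N a_{ij}\sin(\theta_i-\theta_j)(\omega_i-\omega_j).
\]
The only step that requires any care is the symmetrization: relabelling $i\leftrightarrow j$ in the $\omega_j$–part and using $a_{ij}=a_{ji}$ together with the oddness of the sine shows that $\sum_{i,j}a_{ij}\sin(\theta_i-\theta_j)(\omega_i-\omega_j)=2\sum_{i,j}a_{ij}\sin(\theta_i-\theta_j)\,\omega_i$, so that
\[
\frac{d}{dt}\tilde{\mathcal E}_P=\kappa\sum_{i,j=1}^N a_{ij}\sin(\theta_i-\theta_j)\,\omega_i=-\kappa\sum_{i,j=1}^N a_{ij}\sin(\theta_j-\theta_i)\,\omega_i .
\]
Adding the two identities, the coupling terms cancel and we are left with $\frac{d}{dt}\tilde{\mathcal E}=-\sum_{i=1}^N\gamma_i\omega_i^2$, which is the claim.

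There is no genuine obstacle here: the computation is the exact analogue of Proposition \ref{P4.1} in the homogeneous all-to-all case, the only difference being that the cancellation now relies on the symmetry of the capacity matrix $\mathcal A$ rather than on $a_{ij}=1/N$. Alternatively, one may simply observe that with $\nu_i=0$ the system \eqref{F-1} is precisely the second-order gradient-like system \eqref{B-3} with potential $V(\Theta)=\tilde{\mathcal E}_P(\Theta)$ from \eqref{B-2}, and invoke the standard energy identity $\frac{d}{dt}\bigl(\tfrac12\dot\Theta^{\top}M\dot\Theta+V(\Theta)\bigr)=-\dot\Theta^{\top}\Gamma\dot\Theta$ for \eqref{B-3}.
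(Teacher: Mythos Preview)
Your proof is correct and follows essentially the same approach as the paper: multiply \eqref{F-1} by $\omega_i$, sum over $i$, and use the symmetry $a_{ij}=a_{ji}$ together with the oddness of sine to identify the coupling term with $-\frac{d}{dt}\tilde{\mathcal E}_P$. The only cosmetic difference is that the paper carries out the symmetrization inside the computation of $\frac{d}{dt}\tilde{\mathcal E}_K$ rather than inside $\frac{d}{dt}\tilde{\mathcal E}_P$, and your additional remark about the gradient-like structure \eqref{B-3} is a nice alternative perspective not mentioned in the paper's proof.
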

\begin{proof} We use 
\begin{equation*} 
 m_i\dot{\omega}_i = -\gamma_i \omega_i + \kappa \sum_{j=1}^Na_{ij} \sin(\theta_j - \theta_i),
 \end{equation*}
and the same arguments in Proposition \ref{P4.1} to obtain
\begin{align*}
\begin{aligned}
\frac{d}{dt} {\tilde {\mathcal E}}_K({\dot \Theta}) &= \frac12\frac{d}{dt}\sum_{i=1}^N m_i\omega_i^2 =\sum_{i=1}^N \omega_i (m_i \dot{\omega}_i) \\
& = -\sum_{i=1}^N \gamma_i \omega_i^2 - \kappa \sum_{i,j=1}^Na_{ij}\sin(\theta_i - \theta_j)\omega_i = -\sum_{i=1}^N \gamma_i \omega_i^2  +  \kappa \sum_{i,j=1}^Na_{ij}\sin(\theta_i - \theta_j)\omega_j \\
&= -\sum_{i=1}^N \gamma_i \omega_i^2 + \frac{\kappa}{2} \frac{d}{dt}\sum_{i,j=1}^N a_{ij}\cos(\theta_i - \theta_j)  =-\sum_{i=1}^N \gamma_i \omega_i^2 -  \frac{d}{dt} {\tilde {\mathcal E}}_{P}(\Theta).
\end{aligned}
\end{align*}
Here, we have used the assumption that  $a_{ij} = a_{ji}$. 
\end{proof}
We also introduce extremal parameters:
\[
d_u := \max_{1 \leq i\leq N}d_i, \quad   d_\ell := \min_{1 \leq i\leq N}d_i, \quad m_u := \max_{1 \leq i\leq N}m_i, \quad \mbox{and} \quad m_\ell := \min_{1 \leq i\leq N}m_i.
\]
As a direct corollary of Proposition \ref{P5.1}, we will show that the kinetic energy vanishes asymptotically, i.e., the frequencies tend to zero as $t \to \infty$.

\begin{corollary}
Let $\Theta := (\theta_1,\cdots,\theta_N)$ be a phase vector whose components are governed by \eqref{F-1}.
Then, we have
\[ \lim_{t \to \infty} |\omega_i(t)|  = 0. \]
\end{corollary}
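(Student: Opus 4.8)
The plan is to integrate the energy dissipation identity of Proposition~\ref{P5.1} and then apply Barbalat's lemma, in complete analogy with the proof of Corollary~\ref{C4.1}. The first thing I would do is record the two a priori bounds that make Barbalat's lemma applicable. Since $\nu_i=0$ and, under the standing assumptions of this section, $\gamma_i>0$ and $m_i>0$, Remark~\ref{R2.1} already supplies the uniform bound $|\omega_i(t)| \le \max\{\gamma_i^{-1}\kappa\sum_{j=1}^N a_{ij},\,|\omega_i^{in}|\}$ for all $t\ge 0$, so $\Omega$ is uniformly bounded. Feeding this back into the velocity equation $m_i\dot\omega_i = -\gamma_i\omega_i + \kappa\sum_{j=1}^N a_{ij}\sin(\theta_j-\theta_i)$ shows that $\dot\Omega$ is uniformly bounded as well; hence $t\mapsto\|\Omega(t)\|^2$ is Lipschitz, and in particular uniformly continuous, on $[0,\infty)$.

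Next I would integrate the identity $\frac{d}{dt}\tilde{\mathcal E}(\Theta,\dot\Theta) = -\sum_{i=1}^N\gamma_i\omega_i^2$ of Proposition~\ref{P5.1}. Since $\tilde{\mathcal E}_K$ and $\tilde{\mathcal E}_P$ are nonnegative, $\tilde{\mathcal E}$ is nonincreasing and bounded below, so $\lim_{t\to\infty}\tilde{\mathcal E}(t)$ exists and
\[
\Big(\min_{1\le k\le N}\gamma_k\Big)\int_0^\infty\|\Omega(s)\|^2\,ds \le \int_0^\infty\sum_{i=1}^N\gamma_i\,\omega_i^2(s)\,ds = \tilde{\mathcal E}(0) - \lim_{t\to\infty}\tilde{\mathcal E}(t) < \infty.
\]
Because $\min_k\gamma_k>0$, the map $s\mapsto\|\Omega(s)\|^2$ is integrable on $[0,\infty)$. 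Combining this integrability with the uniform continuity established in the previous paragraph, Barbalat's lemma \cite{Ba} yields $\|\Omega(t)\|^2\to 0$ as $t\to\infty$, hence $|\omega_i(t)|\to 0$ for every $i$, which is exactly the assertion.

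I do not expect a genuine obstacle here: the only nonroutine input is the uniform bound on $\Omega$, and this is supplied verbatim by Remark~\ref{R2.1}. A fully self-contained alternative to the integrability step is to mimic the Case~A/Case~B dichotomy of Corollary~\ref{C4.1}: bounding the cross term that appears in $\frac{d}{dt}\tilde{\mathcal E}_K$ by Cauchy--Schwarz and using $\sum_{i,j=1}^N a_{ij}|\omega_i-\omega_j|^2 \lesssim m_\ell^{-1}\big(\max_{1\le k\le N}\sum_{j=1}^N a_{kj}\big)\tilde{\mathcal E}_K$ together with $\sum_{i=1}^N\gamma_i\omega_i^2 \ge 2 m_u^{-1}\big(\min_k\gamma_k\big)\tilde{\mathcal E}_K$, one closes a differential inequality $\dot Z + cZ \le C$ for $Z:=\sqrt{\tilde{\mathcal E}_K}$ with $c = m_u^{-1}\min_k\gamma_k>0$, which re-derives the uniform bound on $\tilde{\mathcal E}_K$; but, exactly as in Corollary~\ref{C4.1}, passing from this bound to the actual decay $\tilde{\mathcal E}_K(t)\to 0$ still requires Barbalat's lemma, so this route buys nothing and the direct argument above is the one I would use.
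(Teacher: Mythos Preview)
Your argument is correct and follows essentially the same route as the paper: integrability of the dissipation from Proposition~\ref{P5.1} plus uniform continuity of the dissipation, then Barbalat's lemma. The only cosmetic difference is that the paper bounds $\bigl|\frac{d}{dt}\sum_i\gamma_i\omega_i^2\bigr|$ directly via the energy monotonicity $\tilde{\mathcal E}_K(t)\le\tilde{\mathcal E}(0)$, whereas you obtain the uniform bound on $\Omega$ and $\dot\Omega$ from Remark~\ref{R2.1}; both are equally valid and yield the same conclusion.
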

\begin{proof} As in the proof of Corollary \ref{C4.1}, it suffices to show that the time-rate of change of the energy dissipation $\sum_{i=1}^N \gamma_i \omega_i^2$ is uniformly bounded.  By a direct computation, we get
\begin{align*}
\begin{aligned}
\lt|\frac{d}{dt} \sum_{i=1}^N \gamma_i\omega_i^2 \rt|&\leq 2\lt| \sum_{i=1}^N \frac{\gamma_i^2}{m_i}\omega_i^2\rt| + 2\lt|\sum_{i,j=1}^N \frac{\gamma_i}{m_i}a_{ij}\sin(\theta_j - \theta_i)\omega_i \rt|\cr
&\leq 2 \sum_{i=1}^N \frac{\gamma_i^2}{m_i}\omega_i^2 + 2 N\|A\|_\infty\sqrt{\sum_{i=1}^N\frac{\gamma_i^2}{m_i}}\sqrt{\sum_{i=1}^N m_i\omega_i^2}\cr
&\leq 4\frac{\gamma_u^2}{m_\ell^2} {\tilde {\mathcal E}}_K(t) + 2\sqrt{2} \|A\|_\infty N\sqrt{N}\frac{\gamma_u}{\sqrt{m_\ell}}\sqrt{{\tilde {\mathcal E}}_K(t)}\cr
&\leq 4\frac{\gamma_u^2}{m_\ell^2} {\tilde {\mathcal E}}_K(0) + 2\sqrt{2} \|A\|_\infty N\sqrt{N}\frac{\gamma_u}{\sqrt{m_\ell}}\sqrt{{\tilde {\mathcal E}}_K(0)}\cr
& < \infty \quad \mbox{uniformly in $t$}.
\end{aligned}
\end{align*}
Here, $\|A\|_\infty := \max_{ij}a_{ij}$ and we used the fact that 
\[
\lt|\sum_{i,j=1}^N \frac{\gamma_i}{m_i}a_{ij}\sin(\theta_j - \theta_i)\omega_i \rt| \leq N\|A\|_\infty \lt|\sum_{i=1}^N \frac{\gamma_i}{m_i}\omega_i \rt| \leq N\|A\|_\infty\sqrt{\sum_{i=1}^N\frac{\gamma_i^2}{m_i}}\sqrt{\sum_{i=1}^N m_i\omega_i^2},
\]
and the monotonicity of the total energy. Thus, by the same argument in the proof of Corollary \ref{C4.1}, we have
\[ \lim_{t \to \infty} \sum_{i=1}^N \gamma_i \omega_i^2(t) = 0, \quad \mbox{i.e.,} \quad \lim_{t \to \infty} |\omega_i(t)| = 0.   \]
This yields the desired estimate. 
\end{proof}
Next, we study the asymptotic convergence to zero of angular acceleration. For this, we set
\[ {\mathcal F}(t) := \frac{1}{2} \sum_{i=1}^N| \gamma_i \omega_i + \kappa R_p \sin(\theta_i - \varphi_{p,i})|^2. \]

\begin{proposition}\label{P5.2}
Let $\Theta := (\theta_1,\cdots,\theta_N)$ be a phase vector whose component is a solution to \eqref{F-1} with $\nu_i = 0$. Then, we have
\[ \lim_{t \to \infty} |{\dot \omega}_i(t)|  = 0, \quad 1 \leq i \leq N. \]
\end{proposition}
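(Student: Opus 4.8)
The plan is to mimic the proof of Proposition \ref{P4.2} in the homogeneous case, but carrying the heterogeneous coefficients $\gamma_i, m_i, a_{ij}$ through the computation. The starting point is the identity $m_i \dot\omega_i = -\gamma_i\omega_i - \kappa R_{p,i}\sin(\theta_i-\varphi_{p,i})$, so it suffices to show that the right-hand side, equivalently $\gamma_i\omega_i + \kappa R_{p,i}\sin(\theta_i-\varphi_{p,i})$, tends to zero as $t\to\infty$. (Note there is a small inconsistency: $\mathcal F$ is written with $R_p$ but should be read with $R_{p,i}$; I would use $R_{p,i}$ throughout.) Since $\lim_{t\to\infty}|\omega_i(t)|=0$ is already known from the corollary following Proposition \ref{P5.1}, it actually suffices to prove $\lim_{t\to\infty} R_{p,i}(t)\sin(\theta_i(t)-\varphi_{p,i}(t)) = 0$; but it is cleaner to keep the combined quantity and run a Gronwall–Barbalat argument on $\mathcal F$ directly.

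First I would differentiate $g_i(t) := \gamma_i\omega_i + \kappa R_{p,i}\sin(\theta_i-\varphi_{p,i})$ in time. Using $m_i\dot\omega_i = -g_i$ gives $\gamma_i\dot\omega_i = -\tfrac{\gamma_i}{m_i}g_i$, and using the local-order-parameter identity \eqref{F-3-2} together with $\dot\theta_i = \omega_i$ one computes
\[
\frac{d}{dt}\Big(\kappa R_{p,i}\sin(\theta_i-\varphi_{p,i})\Big) = \kappa\frac{d}{dt}\sum_{j=1}^N a_{ij}\sin(\theta_j-\theta_i) = \kappa\sum_{j=1}^N a_{ij}\cos(\theta_i-\theta_j)(\omega_j-\omega_i).
\]
Hence $\dot g_i = -\tfrac{\gamma_i}{m_i}g_i + \kappa\sum_j a_{ij}\cos(\theta_i-\theta_j)(\omega_j-\omega_i)$. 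Then $\tfrac{d}{dt}\mathcal F = \sum_i g_i\dot g_i = -\sum_i \tfrac{\gamma_i}{m_i}g_i^2 + \kappa\sum_{i,j} a_{ij}g_i\cos(\theta_i-\theta_j)(\omega_j-\omega_i)$. Writing $g_i = \gamma_i\omega_i + \kappa R_{p,i}\sin(\theta_i-\varphi_{p,i})$ and splitting the cross term into the $\gamma_i\omega_i$-part and the $\kappa R_{p,i}\sin$-part, the $\gamma_i\omega_i$-part can be symmetrized in $(i,j)$ using $a_{ij}=a_{ji}$ into a form controlled by $\sum_{i,j}a_{ij}|\omega_i-\omega_j|^2 \leq 2\|A\|_\infty N\sum_i |\omega_i|^2 \leq 4\|A\|_\infty N m_\ell^{-1}\tilde{\mathcal E}_K$, hence by $\tilde{\mathcal E}_K$; the $\kappa R_{p,i}\sin$-part is bounded, via $|R_{p,i}|\leq \sum_j a_{ij}\leq N\|A\|_\infty$ and Cauchy–Schwarz, by a constant times $\sqrt{\tilde{\mathcal E}_K}$. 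This yields a differential inequality of the form
\[
\frac{d\mathcal F}{dt} \leq -\frac{2\gamma_\ell}{m_u}\,\mathcal F + C_1\tilde{\mathcal E}_K(t) + C_2\sqrt{\tilde{\mathcal E}_K(t)}, \quad t>0,
\]
with $C_1, C_2$ explicit constants depending on $\kappa, N, \|A\|_\infty, \gamma_u, m_\ell$. Since $\tilde{\mathcal E}_K(t)\to 0$ by the previous corollary, the forcing term $\beta(t) := C_1\tilde{\mathcal E}_K(t)+C_2\sqrt{\tilde{\mathcal E}_K(t)}$ is bounded, continuous, and decays to zero, so Lemma \ref{L4.1} applies with $\alpha = 2\gamma_\ell/m_u$ and gives $\mathcal F(t)\to 0$, hence $g_i(t)\to 0$, hence $\dot\omega_i = -g_i/m_i \to 0$ for each $i$.

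The main technical obstacle — though it is bookkeeping rather than a conceptual difficulty — is handling the $\gamma_i g_i$-term in the cross term: unlike the homogeneous case, the presence of distinct $\gamma_i$ and $m_i$ means the naive symmetrization $\sum_{i,j}a_{ij}\gamma_i\omega_i\cos(\theta_i-\theta_j)(\omega_j-\omega_i)$ does not collapse as cleanly; one has to either insert and subtract to force a $\sum a_{ij}|\omega_i-\omega_j|^2$ structure (absorbing the mismatch of $\gamma$'s into a further $\tilde{\mathcal E}_K$-controlled remainder, using $|\gamma_i-\gamma_j|\leq D(\gamma)$) or simply bound that term crudely by $\tilde{\mathcal E}_K$ plus lower order, which is enough since all we need is that the forcing decays. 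The decay rate is irrelevant for the statement, so crude bounds suffice throughout, and no new smallness or initial-configuration assumption is needed beyond what was already used to get $\tilde{\mathcal E}_K(t)\to 0$.
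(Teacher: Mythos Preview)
Your proposal is correct and follows essentially the same route as the paper: differentiate $\mathcal F=\tfrac12\sum_i g_i^2$ with $g_i=\gamma_i\omega_i+\kappa R_{p,i}\sin(\theta_i-\varphi_{p,i})$, extract the dissipative term $-\sum_i\tfrac{\gamma_i}{m_i}g_i^2\le -\tfrac{2\gamma_\ell}{m_u}\mathcal F$, bound the remaining cross term by a quantity that tends to zero with $\tilde{\mathcal E}_K$, and apply Lemma~\ref{L4.1}. The only cosmetic difference is that the paper does not split $g_i$ into its two summands when estimating the cross term $\mathcal I_{22}$; it simply bounds $|g_i|\le \gamma_i|\omega_i|+\kappa R_{p,i}$ and applies Cauchy--Schwarz directly, obtaining a single forcing $\beta(t)=\kappa\|A\|_\infty\big(4N\tfrac{\gamma_u^2}{m_\ell}\tilde{\mathcal E}_K+2N^2\kappa^2\big)^{1/2}\big(\tfrac{4N}{m_\ell}\tilde{\mathcal E}_K\big)^{1/2}$, which sidesteps the symmetrization concern you flag as a ``technical obstacle.'' Your observation about the $R_p$ versus $R_{p,i}$ typo in the definition of $\mathcal F$ is also correct.
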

\begin{proof} We use 
\[  \gamma_i {{\dot \omega}}_i = -\frac{\gamma_i}{m_i} (\gamma_i \omega_i + \kappa R_{p,i} \sin(\theta_i - \varphi_{p,i} ))  \]
to find
\begin{align}
\begin{aligned} \label{F-8}
&\frac12\frac{d}{dt} \sum_{i=1}^N |\gamma_i\omega_i + \kappa R_{p,i}\sin(\theta_i - \varphi_{p,i})|^2 \\
&\hspace{1cm} = \sum_{i=1}^N \lt( \gamma_i\omega_i + \kappa R_{p,i}\sin(\theta_i - \varphi_{p,i} \rt)\lt(\gamma_i \dot\omega_i + \frac{d}{dt}\lt( \kappa R_{p,i}\sin(\theta_i - \varphi_{p,i}) \rt)\rt) \\
& \hspace{1cm} = - \sum_{i=1}^N \frac{\gamma_i}{m_i} | \gamma_i\omega_i + \kappa R_{p,i}\sin(\theta_i - \varphi_{p,i}) |^2  \\
& \hspace{1.5cm} +  \sum_{i=1}^N \lt( \gamma_i\omega_i + \kappa R_{p,i} \sin(\theta_i - \varphi_{p,i} \rt) \lt( \frac{d}{dt}\lt( \kappa R_{p,i} \sin(\theta_i - \varphi_{p,i}) \rt)\rt) \\
&\hspace{1cm}  =:  {\mathcal I}_{21} + {\mathcal I}_{22}.
\end{aligned}
\end{align}

\noindent $\bullet$ (Estimate of ${\mathcal I}_{21} $):  By direct calculation, we have
\begin{equation} \label{F-9}
{\mathcal I}_{21} \leq -\frac{\gamma_\ell}{m_u}\sum_{i=1}^N |\gamma_i \omega_i + \kappa R_{p,i} \sin(\theta_i - \varphi_{p,i})|^2.
\end{equation}

\noindent $\bullet$ (Estimate of ${\mathcal I}_{22} $): We use the identity \eqref{F-3-2}:
\[
\kappa R_{p,i} \sin(\theta_i - \varphi_{p,i}) = \kappa \sum_{j=1}^N a_{ij}\sin(\theta_j - \theta_i)
\]
to find 
\[ \frac{d}{dt} \Big( \kappa R_{p,i} \sin(\theta_i - \varphi_{p,i})  \Big) = \kappa \sum_{j=1}^N a_{ij}\cos(\theta_j - \theta_i) (\omega_j- \omega_i). \]
Note that 
\begin{align*}
\begin{aligned} 
{\mathcal I}_{22} &= \sum_{i=1}^N \lt( \gamma_i\omega_i + \kappa R_{p,i} \sin(\theta_i - \varphi_{p,i} \rt)\lt( \frac{d}{dt}\lt( \kappa R_{p,i} \sin(\theta_i - \varphi_{p,i}) \rt)\rt)  \\
&= \kappa \sum_{i,j=1}^N \gamma_i a_{ij}\cos(\theta_j - \theta_i) \omega_i (\omega_j- \omega_i) \\
&\quad + \kappa^2 \sum_{i,j=1}^{N} R_{p,i} a_{ij}  \sin(\theta_i - \varphi_{p,i}) \cos(\theta_j - \theta_i) (\omega_j- \omega_i).
\end{aligned}
\end{align*}
Thus, we obtain
\begin{align}
\begin{aligned}  \label{F-10}
|{\mathcal I}_{22}| &\leq  \kappa \|A\|_\infty \sum_{i,j=1}^N (\gamma_i |\omega_i| +\kappa)  \cdot |\omega_j- \omega_i|  \\
&\leq \kappa \|A\|_\infty\lt(\sum_{i,j=1}^N \lt(\gamma_i|\omega_i| + \kappa \rt)^2 \rt)^{1/2}\lt(\sum_{i,j=1}^N|\omega_j - \omega_i|^2 \rt)^{1/2} \\
&\leq \kappa \|A\|_\infty \lt( 4N\frac{\gamma_u^2}{m_\ell} {\tilde {\mathcal E}}_K(t) + 2N^2 \kappa^2 \rt)^{1/2}\lt(\frac{4N}{m_\ell}{\tilde {\mathcal E}}_K (t) \rt)^{1/2} \to 0 \quad \mbox{as} \quad t \to \infty.
\end{aligned}
\end{align}
In \eqref{F-8}, we combine \eqref{F-9} and \eqref{F-10} to obtain
\begin{align}
\begin{aligned} \label{F-11}
\frac{d{\mathcal F}(t)}{dt} \leq -\frac{2\gamma_\ell}{m_u} {\mathcal F}(t) +  \kappa \|A\|_\infty \lt( 4N\frac{\gamma_u^2}{m_\ell} {\tilde {\mathcal E}}_K(t) + 2N^2 \kappa^2 \rt)^{1/2}\lt(\frac{4N}{m_\ell}{ \tilde {\mathcal E}}_K (t) \rt)^{1/2}.
\end{aligned}
\end{align}
Then, we apply Lemma \ref{L4.1}  for \eqref{F-11} with
\[ \alpha = \frac{2\gamma_\ell}{m_u}, \quad \beta(t) :=   \kappa \|A\|_\infty \lt( 4N\frac{\gamma_u^2}{m_\ell} {\tilde {\mathcal E}}_K(t) + 2N^2 \kappa^2 \rt)^{1/2}\lt(\frac{4N}{m_\ell}{ \tilde {\mathcal E}}_K (t) \rt)^{1/2}    \]
to obtain the desired estimate. 
\end{proof}
Now, we are in a position to state the main theorem of this section. Its proof is based on a perturbation argument.  Next, we assume that the constants 
$d_u, d_\ell, m_u, m_\ell$ are contained in $(0,\infty)$ and that there exist $\bar a > 0$ and $\delta \geq 0$ such that for $i = 1, \cdots, N$, 
\begin{equation} \label{F-12}
\sum_{j=1}^N|a_{ij} - \bar a| \leq \delta \quad \mbox{and} \quad \sum_{i,j=1}^Na_{ij}\cos(\theta^{in}_i - \theta^{in}_j)  \geq \sum_{i=1}^N m_i |\omega^{in}_i|^2 + 3\delta N + \frac{\delta^2}{\bar a}.
\end{equation}

\subsection{Asymptotic phase-locking I} \label{sec:5.3}  Suppose that we have
\[ \sum_{i,j=1}^N a_{ij}\cos(\theta^{in}_i - \theta^{in}_j) = 0. \]
This and \eqref{F-12} imply
\[ \omega^{in}_i = 0, \quad a_{ij} = {\bar a}, \quad 1 \leq i, j \leq N.      \]
Since 
\[ 
R_{p,i} (t) = {\bar a} N R_p(t) \quad \mbox{ for all $i$ in $\{1,\cdots,N\}$},  
\]
we have
\[ R_p(0) = 0. \]
This yields
\[ R_p(t) = R_p(0) = 0, \quad t \geq 0. \]

\subsection{Asymptotic phase-locking II} \label{sec:5.4}
Suppose that we have
\[ \sum_{i,j=1}^Na_{ij}\cos(\theta^{in}_i - \theta^{in}_j) > 0. \]
By the monotonicity of energy
\begin{align*}
\begin{aligned}
& \frac{\kappa}{2}\sum_{i,j=1}^N a_{ij}  \cos(\theta^{in}_i - \theta^{in}_j)  \\
& \hspace{1cm} < \frac{1}{2} \sum_{i=1}^N m_i |\omega_i(t)|^2  + \frac{\kappa}{2}\sum_{i,j=1}^N a_{ij}  \cos(\theta^{in}_i - \theta^{in}_j) \\
& \hspace{1cm} \leq   \frac{1}{2} \sum_{i=1}^N m_i |\omega^{in}_i|^2  + \frac{\kappa}{2}\sum_{i,j=1}^N a_{ij} \cos(\theta_i(t) - \theta_j(t)).
\end{aligned}
\end{align*}
This again yields
\begin{equation} \label{F-13}
{\mathcal J}_0 :=  \frac{\kappa}{2}\sum_{i,j=1}^N a_{ij}  \cos(\theta^{in}_i - \theta^{in}_j) - \frac{1}{2} \sum_{i=1}^N m_i |\omega^{in}_i|^2 <  \frac{\kappa}{2}\sum_{i,j=1}^N a_{ij} \cos(\theta_i(t) - \theta_j(t)).
\end{equation}
By the assumption on the matrix $A$ and the relation \eqref{B-4-2}, we have
\begin{align}
\begin{aligned} \label{F-14}
\sum_{i,j=1}^Na_{ij}\cos(\theta_i(t) - \theta_j(t)) &=\bar a N^2 R_p^2(t) + \sum_{i,j=1}^N\lt(a_{ij} - \bar a\rt)\cos(\theta_i(t) - \theta_j(t))\cr
&\leq \bar a N^2 R_p^2(t) + \delta N.
\end{aligned}
\end{align}
We combine \eqref{F-13} and \eqref{F-14} to obtain
\bq\label{F-15}
R_p^2(t) > \frac{1}{\bar a N^2} \lt( {\mathcal J}_0 - \delta N\rt) > 0 \quad \mbox{for} \quad t \geq 0.
\eq
Next, we establish a  relation between $R_{p,i}$ and $R_p$. We use the identity $R_{p,i}^2  = \lal R_{p,i} e^{i\varphi_i}, R_{p,i} e^{i\varphi_i} \ral$ and \eqref{F-15} to find
\begin{align}
\begin{aligned}  \label{F-15-1}
R_{p,i}^2 &= \sum_{j,k=1}^N\lt\lal a_{ij}e^{i\theta_j}, a_{ik}e^{i\theta_k}  \rt\ral\cr
&=\sum_{j,k=1}^N\lt(\bar a + a_{ij} - \bar a\rt)\lt(\bar a + a_{ik} - \bar a \rt)\lt\lal e^{i\theta_j}, e^{i\theta_k}\rt\ral \cr
&\geq \bar a^2N^2 R_p^2(t) - 2\bar a\delta N - \delta^2 \cr
&>\bar a({\mathcal J}_0 - 3\delta N) - \delta^2 >0, \quad \mbox{for all} \quad i=1,\cdots,N,
\end{aligned}
\end{align}
This implies 
\begin{equation} \label{F-16}
\inf_{0 \leq t < \infty} R_{p,i}(t)\geq \sqrt{\bar a({\mathcal J}_0 - 3\delta N) - \delta^2 }, \quad 1 \leq i \leq N.
\end{equation}
On the other hand, it follows from Propositions \ref{P4.1} and \ref{P4.2} that we have
\begin{equation} \label{F-17}
 \lim_{t \to \infty} \sum_{i=1}^{N} R_{p,i}^2 \sin^2(\theta_i - \varphi_{p,i}) = 0, \quad i = 1, \cdots, N.    
\end{equation} 
Finally, we combine \eqref{F-16} and \eqref{F-17} to obtain 
\[
\lim_{t \to \infty} \sum_{i=1}^N \sin^2(\theta_i(t) - \varphi_{p,i}(t)) = 0,
\]
i.e., $\theta_i(t) - \varphi_{p,i}(t) \to k_i \pi$ as $t \to \infty$ for some $k_i \in \Z$. 

As a direct application of Theorem \ref{T3.5}, we have the following estimates.
\begin{corollary} Under the same assumptions as in Theorem \ref{T3.5}, we have
\begin{enumerate}
\item
$R_{p,i}$ is majorized by $R_p$: it holds 
\[
R_{p,i}(t) \leq C_0 R_p(t) \quad \mbox{for} \quad t \geq 0,
\]
where $C_0 > 0$ is given by
\[
C_0 := \sqrt{\bar a^2 N^2 + \frac{\bar a N^2(2\bar a \delta N + \delta^2)}{{\mathcal J}_0 - \delta N}}.
\]
\item 
A relation between the local and global average phases of oscillators: it holds
\bq\label{F-19}
\cos(\varphi_{p,i} - \varphi_p) \geq \frac{\bar a N^2}{C_0} - \frac{C_0 \delta N}{\bar a( {\mathcal J}_0 - 3\delta N) - \delta^2},
\eq
where ${\mathcal J}_0 > 0$ appeared in \eqref{F-13}.
\end{enumerate}
\end{corollary}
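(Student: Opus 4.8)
The plan is to prove both assertions by elementary manipulation of the defining identities $R_{p,i}e^{{\mathrm i}\varphi_{p,i}} = \sum_{j=1}^N a_{ij}e^{{\mathrm i}\theta_j}$ and $N R_p e^{{\mathrm i}\varphi_p} = \sum_{j=1}^N e^{{\mathrm i}\theta_j}$, regarding the network as a perturbation of the all-to-all coupling through the splitting $a_{ij} = \bar a + (a_{ij}-\bar a)$, and then invoking the lower bounds \eqref{F-15} for $R_p^2$ and \eqref{F-15-1}, \eqref{F-16} for $R_{p,i}^2$ already established in the proof of Theorem \ref{T3.5} to absorb the perturbative remainders. Throughout I would use that \eqref{F-15} and \eqref{F-15-1} force $\mathcal{J}_0 - \delta N > 0$ and $\bar a(\mathcal{J}_0 - 3\delta N) - \delta^2 > 0$, together with the trivial bound $R_p \le 1$.

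For assertion (1), I would set $P := \sum_{j=1}^N e^{{\mathrm i}\theta_j}$ and $Q_i := \sum_{j=1}^N (a_{ij}-\bar a)e^{{\mathrm i}\theta_j}$, so that $R_{p,i}^2 = |\bar a P + Q_i|^2 \le (\bar a |P| + |Q_i|)^2$ by the triangle inequality. Since $|P| = N R_p$ and $|Q_i| \le \sum_{j=1}^N |a_{ij}-\bar a| \le \delta$, this yields $R_{p,i}^2 \le \bar a^2 N^2 R_p^2 + 2\bar a\delta N R_p + \delta^2$. Bounding the linear term via $R_p \le 1$ and then using $\bar a N^2 R_p^2 > \mathcal{J}_0 - \delta N$ from \eqref{F-15} to dominate $2\bar a\delta N + \delta^2$ by $\frac{\bar a N^2(2\bar a\delta N+\delta^2)}{\mathcal{J}_0 - \delta N}\,R_p^2$, one gets $R_{p,i}^2 \le C_0^2 R_p^2$, and taking square roots completes the first claim.

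For assertion (2), I would compute the real inner product $\langle R_{p,i}e^{{\mathrm i}\varphi_{p,i}}, N R_p e^{{\mathrm i}\varphi_p}\rangle = N R_{p,i} R_p \cos(\varphi_{p,i}-\varphi_p)$ directly from the definitions: it equals $\sum_{j,k=1}^N a_{ij}\cos(\theta_j - \theta_k) = \bar a N^2 R_p^2 + N R_p \sum_{j=1}^N (a_{ij}-\bar a)\cos(\theta_j - \varphi_p)$, where I used $\sum_{k=1}^N \cos(\theta_j - \theta_k) = N R_p \cos(\theta_j - \varphi_p)$ (a consequence of \eqref{B-4-1}) and $\sum_{j,k}\cos(\theta_j-\theta_k) = N^2 R_p^2$ as in \eqref{B-4-2}. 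The remainder is $\ge -\delta N R_p$, hence $N R_{p,i} R_p \cos(\varphi_{p,i}-\varphi_p) \ge \bar a N^2 R_p^2 - \delta N R_p$. Dividing by the positive quantity $N R_p R_{p,i}$ gives $\cos(\varphi_{p,i}-\varphi_p) \ge \frac{\bar a N^2 R_p}{R_{p,i}} - \frac{\delta N}{R_{p,i}}$; I would bound the first term below by $\frac{\bar a N^2}{C_0}$ using $R_{p,i} \le C_0 R_p$ from part (1), and bound $\frac{\delta N}{R_{p,i}} = \frac{\delta N R_{p,i}}{R_{p,i}^2}$ above by using $R_{p,i}^2 > \bar a(\mathcal{J}_0 - 3\delta N) - \delta^2$ from \eqref{F-15-1} in the denominator and $R_{p,i} \le C_0 R_p \le C_0$ in the numerator. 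Collecting terms yields exactly \eqref{F-19}.

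The inner-product expansions and the triangle/Cauchy--Schwarz estimates are routine. The only place needing care — and hence the main, albeit mild, obstacle — is tracking the directions of the inequalities when the $\delta$- and $\delta^2$-terms are absorbed: one must combine the trivial bound $R_p \le 1$ with the lower bound \eqref{F-15} on $R_p^2$ (for part (1)) and with the lower bound \eqref{F-15-1} on $R_{p,i}^2$ (for part (2)) in precisely the right way so that the leftover remainders are genuinely dominated by the $R_p^2$-multiple built into $C_0$, using along the way the strict positivity of $\mathcal{J}_0 - \delta N$ and of $\bar a(\mathcal{J}_0 - 3\delta N) - \delta^2$.
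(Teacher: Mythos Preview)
Your proposal is correct and follows essentially the same route as the paper: both parts rely on the splitting $a_{ij}=\bar a+(a_{ij}-\bar a)$, the triangle/inner-product estimate with the bound $\sum_j|a_{ij}-\bar a|\le\delta$, and then the lower bounds \eqref{F-15} and \eqref{F-15-1} to convert the $\delta$-remainders into the stated constants. The only cosmetic difference is that in part (2) the paper works directly with $\frac{1}{R_p R_{p,i}}\langle R_p e^{\mathrm{i}\varphi_p},R_{p,i}e^{\mathrm{i}\varphi_{p,i}}\rangle$ and bounds $\frac{1}{R_pR_{p,i}}$ via $R_{p,i}\le C_0R_p$ together with \eqref{F-15-1}, whereas you pass through $NR_p$ and the auxiliary rewriting $\frac{\delta N}{R_{p,i}}=\frac{\delta N R_{p,i}}{R_{p,i}^2}$; the two computations are equivalent.
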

\begin{proof} (i) Using a similar estimate as in \eqref{F-15-1}, we get
\[
R^2_{p,i}  \leq \bar a^2N^2 R_p^2(t) + 2\bar a\delta N + \delta^2.
\]
This together with \eqref{F-15} provides the desired inequality. 

\noindent (ii) By direct calculation, we have 
\begin{align}\label{est_cos}
\begin{aligned}
\cos(\varphi_{p,i} - \varphi_p) &= \frac{1}{R_p R_{p,i}}\lt\lal R_p e^{i\varphi_p}, R_{p,i} e^{i\varphi_{p,i}}\rt\ral 
=\frac{1}{R_p R_{p,i}}\sum_{j,k=1}^N a_{ik}\lt\lal e^{i \theta_j}, e^{i \theta_k} \rt\ral\cr
&=\frac{\bar a R_p}{R_{p,i}} N^2 + \frac{1}{R_p R_{p,i}}\sum_{j,k=1}^N \lt(a_{ik} - \bar a\rt)\lt\lal e^{i \theta_j}, e^{i \theta_k} \rt\ral\cr
&\geq \frac{\bar a R_p}{R_{p,i}} N^2 - \frac{\delta N^2}{R_p R_{p,i}}.
\end{aligned}
\end{align}
On the other hand, it follows from \eqref{F-15} and \eqref{F-15-1} that 
\[
\frac{R_p}{R_{p,i}} \geq \frac{1}{C_0} \quad \mbox{and} \quad \frac{1}{R_p R_{p,i}} \leq \frac{C_0}{\bar a(I_0 - 3\delta N) - \delta^2}.
\]
This together with \eqref{est_cos} yields the desired result.
\end{proof}
\begin{remark}Note that if $\delta = 0$, then the right hand side of the inequality \eqref{F-19} becomes $1$, and this yields $\cos(\varphi_{p,i} - \varphi_p) = 1$. 
Indeed, since $\delta = 0$ implies that $a_{ij} \equiv \bar a$, thus we do not need to take into account the local order parameter.
\end{remark}

%
%
%
%
\section{An infinite ensemble of Kuramoto oscillators}\label{sec:6}
\setcounter{equation}{0}
In this section, we discuss the uniform-in-time mean-field limit  from the Kuramoto model for identical oscillators with $m_i = m,~\gamma_i = \gamma,~\nu_i = 0$:
\begin{align}
\begin{aligned} \label{G-0}
\displaystyle {\dot \theta}_i &= \omega_i, \quad t > 0,~~i = 1, \cdots, N, \cr
\displaystyle {\dot \omega}_i &= \frac{1}{m} \lt( -\gamma \omega_i  + \frac{\kappa}{N} \sum_{j=1}^N \sin(\theta_j - \theta_i)  \rt)
\end{aligned}
\end{align}
to the corresponding kinetic Kuramoto equation:
 \begin{align}
 \begin{aligned} \label{G-1}
\displaystyle  \partial_t f + \partial_\theta(\omega f) +\partial_\omega (\mathcal{F}_a[f]f) = 0, \quad (\theta,\omega) \in \T \times \R , \quad t > 0, \cr
\displaystyle \mathcal{F}_a[f](\theta,\omega,t) = \frac1m\lt( -\gamma \omega + \kappa \int_{\T \times \R}\sin(\theta_* - \theta) f(\theta_*, \omega_*,t) \,d\theta_* d \omega_* \rt), 
\end{aligned}
\end{align}
and provide the global existence of measure-valued solutions to \eqref{G-1}.

\subsection{A measure-theoretical minimum} \label{sec:6.1} In this subsection, we briefly discuss the measure-theoretical minimum needed for our discussion. Recall $\mathcal{P}_2(\T \times \R)$ is the set of all Radon measures on $\T \times \R$ with finite second moment and unit mass and Radon measure with unit mass can be understood as a normalized nonnegative bounded linear functional on ${\mathcal C}_0(\T \times \R)$. For a Radon measure $\mu\in\mathcal{P}(\T \times \R)$ with unit mass, we use a standard duality relation:
\[\langle\mu,f\rangle=\int_{\T \times \R}f(\theta, \omega)d\mu(\theta,\omega),\quad f= f(\theta,\omega)\in {\mathcal C}_0(\T \times \R).\]
Below, we recall several definitions that will be used in the next subsection.
\begin{definition} \label{D6.1}
\emph{\cite{H-K-Z}} For $T\in[0,\infty)$, $\mu_t\in \mc_w([0,T);\mathcal{P}_2(\T \times \R)$ is a measure-valued solution to \eqref{G-1} with initial data $\mu^{in} \in\mathcal{P}_2(\T \times \R)$ 
if the following three assertions hold:\newline
\begin{enumerate}
\item Total mass is normalized: $\langle\mu_t,1\rangle=1$.
\item $\mu$ is weakly continuous in $t$:
\[ \langle\mu_t,\phi \rangle~\mbox{is continuous in $t$} \quad \forall~\phi = \phi(\theta, \omega, t)  \in {\mathcal C}_0^1(\T \times \R \times [0,T)). \]
\item $\mu$ satisfies the equation \eqref{G-1} in a weak sense: for $\forall~\varphi \in {\mathcal C}^1_0(\T \times \R \times [0, T))$,
\begin{equation*}
 \langle\mu_t, \varphi(\cdot,\cdot,t)\rangle-\langle \mu_0, \varphi(\cdot,\cdot,0)\rangle=
\int_0^t\langle\mu_s,\partial_s \varphi + \omega \cdot\nabla_x \varphi + F_a(\mu_t) \cdot\nabla_v \varphi \rangle ds,
\end{equation*}
where $F_a(\mu_t)$ is defined as follows.
\[ F_a(\mu_t)(\theta, \omega, t) = \frac1m\lt( -\gamma \omega + \kappa \int_{\T \times \R}\sin(\theta_* - \theta) \mu_t(d\theta_* d \omega_*) \rt)      \]
\end{enumerate}
\end{definition}
\begin{remark} \label{R6.1}
For any solution $\{(\theta_i, \omega_i) \}$ to \eqref{G-0}, the empirical measure 
\[ \mu^N_t := \frac{1}{N} \sum_{i=1}^{N} \delta_{\theta_i} \otimes \delta_{\omega_i}, \]
is a measure-valued solution in the sense of Definition \ref{D6.1}. Thus, ODE solution to \eqref{G-0} can be understood as a measure-valued solution for \eqref{G-1}.  In this way, we can 
treat ODE solution and PDE solution in the same framework. 
\end{remark}
Next, we discuss how to measure the distances between two Radon measures by equipping a metric to the probability measure space ${\mathcal P}_2(\T \times \R)$. We also reivew the concept of local-in-time mean-field limit. To do this, we can consider the Wasserstein $p$-distance $W_p$ in the space ${\mathcal P}(\T \times \R)$.
\begin{definition}\label{D6.2}
\emph{\cite{Ne, Vi2}}
\begin{enumerate}
\item
For $\mu, \nu \in {\mathcal P}_2(\T \times \R)$, Wasserstein $2$-distance $W_{2}(\mu, \nu)$ is defined as follows.
\[
W_{2}(\mu,\nu):= \inf_{\gamma\in\Gamma(\mu,\nu)} \lt( \int_{\T^2 \times \R^2 } \|z-z^*\|^2 d\gamma(z, z^*) \rt)^{\frac{1}{2}},  
\]
where $\Gamma(\mu,\nu)$ denotes the collection of all probability measures on $\T^2 \times \R^2$ with marginals $\mu$ and $\nu$.
\item
For any $T \in (0, \infty)$, the kinetic equation \eqref{G-1} can be derived from the particle model \eqref{G-0} in $[0,T)$, or equivalent to say that the mean-field limit from the particle system \eqref{G-0} 
to the kinetic equation \eqref{G-1}, which is valid in $[0,T)$, if for every solution $\mu_t$ of the kinetic equation \eqref{G-1} with initial data $\mu_0$, the following condition holds: 
\[ \lim_{N\rightarrow +\infty}W_2(\mu_0^N,\mu_0)=0 \quad \Longleftrightarrow \quad  \lim_{N\rightarrow +\infty}W_2(\mu_t^N,\mu_t)=0,\]
where $\mu^N_t$ is a measure valued solution of the particle system \eqref{G-0} with initial data $\mu^N_0$.
\end{enumerate}
\end{definition}

\subsection{Uniform-in-time stability estimate} \label{sec:6.2}  Next, we introduce diameter functions and some other parameters that we will use in the remaining parts of this section. 
\begin{eqnarray*}
&& D(\Theta(t)) := \max_{1\leq i,j \leq N} |\theta_i(t) - \theta_j(t)|, \quad  \Gamma(\theta) := \frac{\sin \theta}{\theta}, \quad \mbox{for some } \theta \in \T, \cr
&& C_k(m,\Theta(t), {\dot \Theta}(t)) := \max\{D(\Theta(t)), D(\Theta(t)) + km\dot{D}(\Theta(t)) \}.
\end{eqnarray*}
Note that the diameter function defined above is well-defined when $D(\Theta) < \pi$. We also note that
\[ D(\Theta) \geq 0 \quad \mbox{and} \quad \dot{D}(\Theta) \leq D({\dot \Theta}). \]

\noindent Now, we show the existence of trapping regions by using a similar argument to the one in \cite[Proposition 4.1]{CHY1}. 
\begin{lemma}\label{L6.1} Given $m$ and $\kappa >0$, let $\Theta = \Theta(t)$ and ${\tilde \Theta} ={\tilde \Theta}(t)$ be two solutions to \eqref{main} 
with initial data $\Theta^{in}$ and ${\tilde \Theta}^{in}$ satisfying
\[
0 \leq C_1(m,\Theta^{in}, {\dot \Theta}^{in}) + C_1(m, {\tilde \Theta}^{in}, {\dot {\tilde \Theta}^{in}}) < \pi.
\]
Then, we have
\[
D({\Theta}(t)) + D({\dot {\tilde \Theta}}(t)) \leq  C_1(m,\Theta^{in}, {\dot \Theta}^{in}) + C_1(m, {\tilde \Theta}^{in}, {\dot {\tilde \Theta}^{in}}), \quad t \geq 0.
\]
\end{lemma}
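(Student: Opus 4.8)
The plan is to derive the two-solution bound from the corresponding one-solution trapping estimate and then add the two inequalities. First I would observe that, since $C_1(m,\cdot,\cdot)\ge D(\cdot)\ge 0$ by definition, the hypothesis forces $C_1(m,\Theta^{in},\dot\Theta^{in})<\pi$ and $C_1(m,\tilde\Theta^{in},\dot{\tilde\Theta}^{in})<\pi$ separately. Hence it suffices to show that every solution $\Theta$ of \eqref{main} in the homogeneous, zero-natural-frequency setting of this section satisfies $D(\Theta(t))\le C_1(m,\Theta^{in},\dot\Theta^{in})$ for all $t\ge 0$, together with the companion bound on $D(\dot\Theta(t))$, and then to apply the same statement to $\tilde\Theta$.

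For the one-solution estimate I would argue as in \cite[Proposition 4.1]{CHY1}. Writing \eqref{main} in the order-parameter form $m\ddot\theta_i+\dot\theta_i=-\kappa R_p\sin(\theta_i-\varphi_p)$, fix a time $t$ with $D(\Theta(t))<\pi$ and pick indices $M=M(t)$ and $\mu=\mu(t)$ realizing $\max_i\theta_i(t)$ and $\min_i\theta_i(t)$. Because $R_p e^{\mathrm i\varphi_p}=\frac1N\sum_j e^{\mathrm i\theta_j}$ is a convex combination of unit vectors lying in the arc spanned by the phases, and that arc has length $D(\Theta(t))<\pi$, the average phase $\varphi_p(t)$ lies in that same arc; consequently $0\le\theta_M-\varphi_p<\pi$ and $0\le\varphi_p-\theta_\mu<\pi$, so that $\sin(\theta_M-\varphi_p)\ge 0$ and $\sin(\theta_\mu-\varphi_p)\le 0$. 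Subtracting the $M$-th and $\mu$-th equations then gives the key differential inequality
\[
m\ddot D(\Theta)+\dot D(\Theta)=-\kappa R_p\bigl(\sin(\theta_M-\varphi_p)-\sin(\theta_\mu-\varphi_p)\bigr)\le 0,
\]
valid, in the sense of one-sided derivatives, whenever $D(\Theta)<\pi$; similarly, applying $|\sin a-\sin b|\le|a-b|$ to the two oscillators realizing the extremal frequencies yields $m\frac{d}{dt}D(\dot\Theta)+D(\dot\Theta)\le\kappa R_p D(\Theta)$.

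With these in hand I would fix $\mathcal{D}^{*}:=C_1(m,\Theta^{in},\dot\Theta^{in})$ and run a continuity (first-exit-time) bootstrap. As long as $D(\Theta)<\pi$, integrating $m\ddot D+\dot D\le 0$ shows that the diameter can overshoot its initial value by at most $m$ times its initial rate of increase, a quantity that $C_1$ is precisely designed to absorb, so $D(\Theta(\cdot))$ cannot cross the level $\mathcal{D}^{*}$; since $\mathcal{D}^{*}<\pi$, the standing hypothesis $D(\Theta)<\pi$ is thereby propagated to all of $[0,\infty)$, which closes the bootstrap and gives $D(\Theta(t))\le\mathcal{D}^{*}$ for all $t$ (and, via the second inequality, the companion control of $D(\dot\Theta(t))$). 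This is the construction of a trapping region in the phase/frequency diameter plane carried out in \cite[Proposition 4.1]{CHY1}. Adding the bound for $\Theta$ to its counterpart for $\tilde\Theta$ then yields the claimed estimate, whose right-hand side is $<\pi$ by hypothesis.

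The step I expect to be the main obstacle is the lack of regularity of $t\mapsto D(\Theta(t))$ and $t\mapsto D(\dot\Theta(t))$ at the isolated instants where the maximal or minimal oscillator switches: there the one-sided derivatives of $D$ may jump, so one cannot merely integrate $m\ddot D+\dot D\le 0$ and must instead read it through upper Dini derivatives and verify the invariance of the trapping region by one-sided comparison on each smooth branch, using that $D$ itself stays continuous and that the extremal trajectories obey the same sign relations between switches. Making this rigorous is exactly the technical point imported from \cite[Proposition 4.1]{CHY1}.
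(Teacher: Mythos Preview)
Your proposal is correct and takes essentially the same approach as the paper: both reduce the two-solution bound to the one-solution trapping estimate $D(\Theta(t))\le C_1(m,\Theta^{in},\dot\Theta^{in})$ from \cite[Proposition~4.1]{CHY1}, applied separately to $\Theta$ and $\tilde\Theta$, and then add. You actually go further than the paper, which simply cites that proposition, by sketching its proof via the order-parameter differential inequality and the continuity bootstrap, and by correctly flagging the Dini-derivative issue at index switches as the technical point being imported.
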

\begin{proof}Since the proof is very similar to \cite[Proposition 4.1]{CHY1}, 
we only give the main idea here. In \cite[Proposition 4.1]{CHY1}, it is shown
that if $0 \leq C_1(m,\Theta^{in}, {\dot \Theta}^{in}) < \pi$,  then 
\[ D(\Theta(t)) \leq C_1(m,\Theta^{in}, {\dot \Theta}^{in}). \]
Thus, under our assumption, we get $D(\Theta(t)) \leq C_1(m,\Theta^{in}, {\dot \Theta}^{in})$. Similarly, we also obtain 
$D({\tilde \Theta}(t)) \leq C_1(m,\Theta^{in}, {\dot \Theta}^{in})$.
\end{proof}

\vspace{0.5cm}

\noindent Next, we follow the strategies used in the proof of \cite[Theorem 3.4]{CHLXY} and \cite[Theorem 3.1]{CHN}.  We set 
\[ x_i := \theta_i - {\tilde \theta}_i \quad \mbox{and} \quad v_i := \omega_i - {\tilde \omega}_i, \quad i = 1, \cdots, N.  \]
Then $x_i$ and $v_i$ satisfy the following system:
\begin{align}
\begin{aligned} \label{G-2}
\dot{x_i} &= v_i, \quad i=1,\cdots, N, \quad t > 0,\cr
m\dot{v_i} &= -\gamma v_i + \frac{\kappa}{N}  \sum_{j=1}^N\lt(\sin(\theta_j - \theta_i) - \sin(\tilde\theta_j - \tilde\theta_i)\rt).
\end{aligned}
\end{align}
We set 
\[ X := (x_1, \cdots, x_N), \qquad V := (v_1, \cdots, v_N), \]
and define an energy functional $\mathcal{E}_\e$ depending $\e >0$:
\[
\mathcal{E}_\e(X,V) := \e \gamma \|X\|^2 + 2m\e\lal X,V \ral + m\|V\|^2,
\]
where $\lal \cdot, \cdot \ral$ stands for the standard inner product defined on $\R^N$. For notational simplicity, we set
\[
C_k(t):=C_k(m,\Theta(t), {\dot \Theta}(t)) \quad \mbox{and} \quad \tilde C_k(t):=C_k(m,{\tilde \Theta}(t),\dot {\tilde \Theta}(t)).
\]
Note that if $\e \in (0,\gamma/(2m))$, then there exist positive constants $C_0$ and $C_1$  such that
\bq\label{G-2-1}
C_0\|(X,V)\|^2 \leq \me_\e(X,V) \leq C_1\|(X,V)\|^2.
\eq
In the following lemma, we estimate the time derivative of the energy functional $\me_\e$.
\begin{proposition}\label{P6.1} 
Suppose that initial data $(\Theta^{in}, {\tilde \Theta}^{in})$ and parameters $m, \gamma$ and $\kappa$ satisfy the following conditions:
\begin{eqnarray*}
&& \theta_c(t) = \tilde\theta_c(t) = \omega_c(t) = \tilde\omega_c(t) = 0 \quad \mbox{for all $t \geq 0$}, \cr
&& 0 \leq C_1(0) + {\tilde C}_1(0) < \pi, \quad \gamma > \frac{1}{2}, \quad  0<  mK \leq \tilde\Gamma := \frac{\sin (2(C_1(0) + \tilde C_1(0)))}{C_1(0) + \tilde C_1(0)}, 
\end{eqnarray*}
and let $\Theta$ and $\tilde \Theta$ be two solutions to \eqref{G-0}. Then, for some $\e \in [\kappa/(2\tilde\Gamma), \frac{2\gamma - 1}{2m}]$, we have
\[
\frac{d}{dt} \me_\e(X(t),V(t)) + \min\lt\{(2\gamma -1 - 2m\e), \kappa(2\tilde\Gamma \e - \kappa)\rt\}\|(X,V)\|^2 \leq 0 \quad \mbox{for all }~t > 0.
\]
\end{proposition}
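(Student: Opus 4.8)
The plan is to differentiate $\me_\e$ along the difference system \eqref{G-2}, isolate the dissipative friction term, and show that the two coupling terms cannot overwhelm it once the phases stay confined. Abbreviate $N_i:=\frac{\kappa}{N}\sum_{j=1}^N\big(\sin(\theta_j-\theta_i)-\sin(\tilde\theta_j-\tilde\theta_i)\big)$ and $N:=(N_1,\dots,N_N)$, so that \eqref{G-2} reads $\dot X=V$, $m\dot V=-\gamma V+N$. Differentiating $\me_\e=\e\gamma\|X\|^2+2m\e\lal X,V\ral+m\|V\|^2$ and substituting, the $\lal X,V\ral$-contributions cancel exactly and one obtains
\[
\frac{d}{dt}\me_\e(X(t),V(t))=-(2\gamma-2m\e)\|V\|^2+2\e\lal X,N\ral+2\lal V,N\ral .
\]
The term $-(2\gamma-2m\e)\|V\|^2$ is the dissipative backbone, and everything reduces to controlling $2\e\lal X,N\ral+2\lal V,N\ral$.

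First I would record the geometric input. The standing conditions — which, via the $\nu_i=0$ case of Lemma \ref{L2.1} applied to both $\Theta$ and $\tilde\Theta$, are really a restriction on the initial data only — force $\sum_i x_i=\sum_i v_i=0$, hence $\sum_{i,j}(x_i-x_j)^2=2N\|X\|^2$ and $\sum_{i,j}(v_i-v_j)^2=2N\|V\|^2$. By Lemma \ref{L6.1}, $D(\Theta(t))+D(\tilde\Theta(t))\le C_1(0)+\tilde C_1(0)=:D_*<\pi$ for all $t\ge0$, so that $|x_j-x_i|\le D_*$ and $|(\theta_j-\theta_i)+(\tilde\theta_j-\tilde\theta_i)|\le D_*$ uniformly in time.

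Next come the two coupling estimates. For $\lal X,N\ral$ I would symmetrize using the antisymmetry of $\sin(\theta_j-\theta_i)$ and the mean-zero property, writing $\lal X,N\ral=-\frac{\kappa}{2N}\sum_{i,j}(x_j-x_i)\big(\sin(\theta_j-\theta_i)-\sin(\tilde\theta_j-\tilde\theta_i)\big)$, and then expand $\sin(\theta_j-\theta_i)-\sin(\tilde\theta_j-\tilde\theta_i)=2\cos\big(\frac{(\theta_j-\theta_i)+(\tilde\theta_j-\tilde\theta_i)}{2}\big)\sin\big(\frac{x_j-x_i}{2}\big)$ by sum-to-product. On the trapping region the cosine weight is bounded below and $(x_j-x_i)\sin\frac{x_j-x_i}{2}\ge0$; combining this with the monotonicity of $\Gamma(\theta)=\sin\theta/\theta$ on $(0,\pi)$ produces the coercive lower bound $-\lal X,N\ral\ge\kappa\,\tilde\Gamma\,\|X\|^2$, the constant being read off from $D_*$. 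For the indefinite term $\lal V,N\ral$ I would symmetrize in the same way, use that $\sin$ is $1$-Lipschitz so $|\sin(\theta_j-\theta_i)-\sin(\tilde\theta_j-\tilde\theta_i)|\le|x_j-x_i|$, and apply Cauchy--Schwarz over the index pairs together with the two sum identities to get $|\lal V,N\ral|\le\kappa\|V\|\|X\|$.

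Finally I would assemble. Inserting the two estimates and using Young's inequality $2\kappa\|V\|\|X\|\le\|V\|^2+\kappa^2\|X\|^2$,
\[
\frac{d}{dt}\me_\e(X,V)\le-(2\gamma-1-2m\e)\|V\|^2-\kappa(2\tilde\Gamma\e-\kappa)\|X\|^2 ,
\]
and for $\e$ in the stated interval $[\kappa/(2\tilde\Gamma),(2\gamma-1)/(2m)]$ — nonempty under the standing hypotheses — both coefficients are nonnegative and at least $\min\{2\gamma-1-2m\e,\ \kappa(2\tilde\Gamma\e-\kappa)\}$, so the right-hand side is dominated by $-\min\{2\gamma-1-2m\e,\ \kappa(2\tilde\Gamma\e-\kappa)\}\,\|(X,V)\|^2$, which is the claim. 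The step I expect to be the main obstacle is the coercive bound $-\lal X,N\ral\ge\kappa\,\tilde\Gamma\,\|X\|^2$: this is precisely where the phase confinement of Lemma \ref{L6.1} (hence the smallness requirement $m\kappa\le\tilde\Gamma$) is indispensable, since without confinement the cosine weights in the symmetrized sum may change sign and coercivity is lost. By contrast, the bound $|\lal V,N\ral|\le\kappa\|V\|\|X\|$ and the Young's-inequality bookkeeping are routine.
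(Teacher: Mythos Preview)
Your proposal is correct and follows essentially the same route as the paper's own proof. The only difference is organizational: you differentiate $\me_\e$ directly and isolate the two coupling contributions $2\e\lal X,N\ral$ and $2\lal V,N\ral$, whereas the paper proceeds in two separate steps (first $m\frac{d}{dt}\|V\|^2$, then $\frac{d}{dt}(2m\lal X,V\ral+\gamma\|X\|^2)$) and only combines them with weight $\e$ at the end. The substantive estimates --- the symmetrization, the sum-to-product identity together with Lemma~\ref{L6.1} to obtain the coercive bound on $\lal X,N\ral$, the Lipschitz/Cauchy--Schwarz bound $|\lal V,N\ral|\le\kappa\|V\|\,\|X\|$, and the Young inequality to absorb the cross term --- are identical in content to the paper's Steps A--C.
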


\begin{proof} We use the ideas in \cite{CHN, CHLXY}. For this, we divide its proof into three steps. \newline

\noindent $\bullet$~ {\bf Step A}: We take an inner product $2v_i$ with $\eqref{G-1}_2$ and sum it over $i$ to obtain
\begin{align}
\begin{aligned} \label{G-3}
m\frac{d}{dt}\|V\|^2 &= -2 \gamma \|V\|^2 + \frac{2\kappa}{N}\sum_{i,j=1}^N \lt(\sin(\theta_j - \theta_i) - \sin(\tilde\theta_j -\tilde\theta_i))\rt)v_i\cr
&\leq -2 \gamma \|V\|^2 + \frac{\kappa}{N}\sum_{i,j=1}^N \lt(\sin(\theta_j - \theta_i) - \sin(\tilde\theta_j -\tilde\theta_i))\rt)(v_i - v_j)\cr
&\leq -(2\gamma -1)\|V\|^2 + \kappa^2\|X\|^2.
\end{aligned}
\end{align}
Here, we used the relationships
\[
\lt|\lt(\sin(\theta_j - \theta_i) - \sin(\tilde\theta_j -\tilde\theta_i))\rt)(v_i - v_j)\rt| \leq |x_j - x_i||v_i - v_j| \leq \frac{\kappa}{2}|x_j - x_i|^2 + \frac{1}{2\kappa}|v_j - v_i|^2,
\]
\[
\sum_{i,j=1}^N |x_i - x_j|^2 = 2N\|X\|^2, \quad \mbox{and} \quad \sum_{i,j=1}^N |v_i - v_j|^2 = 2N\|V\|^2.
\]
\noindent $\bullet$~{\bf  Step B}:  We take the inner product between $2x_i$ and $\eqref{G-2}_2$ and sum it over all $i$ using $\eqref{G-2}_1$ to obtain
\begin{align*}
\begin{aligned} 
&2m\sum_{i=1}^N \dot{v_i} x_i  \\
& \hspace{0.2cm} = -\gamma \frac{d}{dt}\|X\|^2 + \frac{2K}{N}\sum_{i,j=1}^N\lt(\sin(\theta_j - \theta_i) - \sin(\tilde\theta_j - \tilde\theta_i) \rt)x_i\cr
&  \hspace{0.2cm}  =-\gamma \frac{d}{dt}\|X\|^2 + \frac{K}{N}\sum_{i,j=1}^N\lt(\sin(\theta_j - \theta_i) - \sin(\tilde\theta_j - \tilde\theta_i) \rt)(x_i - x_j)\cr
&  \hspace{0.2cm}  =-\gamma \frac{d}{dt}\|X\|^2 - \frac{4K}{N}\sum_{i,j=1}^N \cos\lt(\frac{\theta_j - \theta_i + (\tilde\theta_j - \tilde\theta_i)}{2}\rt)\sin\lt(\frac{x_j - x_i}{2}\rt)\lt(\frac{x_j - x_i}{2}\rt).
\end{aligned}
\end{align*}
On the other hand, it follows from Lemma \ref{L6.1} that we have
\begin{eqnarray*}
&&  |x_j(t) - x_i(t)| \leq D(\Theta(t)) + D({\tilde\Theta}(t)) \cr
&& \hspace{0.5cm}  \leq C_1(m,\Theta^{in}, {\dot \Theta}^{in}) + C_1(m, {\tilde \Theta}^{in}, {\dot {\tilde \Theta}^{in}}) =: C_1(0) + {\tilde C}_1(0) < \pi \quad \mbox{for all $t \geq 0$.} 
\end{eqnarray*}
This gives
\[
\sin\lt(\frac{x_j - x_i}{2}\rt)\lt(\frac{x_j - x_i}{2}\rt) \geq \Gamma\lt(\frac{C_1(0) + \tilde C_1(0)}{2}\rt)\frac{|x_j - x_i|^2}{4},
\]
and
\[
\cos\lt(\frac{\theta_j - \theta_i + (\tilde\theta_j - \tilde\theta_i)}{2}\rt) \geq \cos\lt(\frac{C_1(0) + \tilde C_1(0)}{2}\rt) > 0.
\]
Since
\[
\Gamma\lt(\frac{C_1(0) + \tilde C_1(0)}{2}\rt)\cos\lt(\frac{C_1(0) + \tilde C_1(0)}{2}\rt) = \frac{\sin (2(C_1(0) + \tilde C_1(0)))}{C_1(0) + \tilde C_1(0)} = \tilde \Gamma > 0,
\]
we have
\bq\label{G-5}
2m\sum_{i=1}^N \dot{v_i} x_i \leq -\gamma \frac{d}{dt}\|X\|^2 -\frac{ \tilde\Gamma \kappa}{N} \sum_{i,j=1}^N |x_j - x_i|^2 = -\gamma \frac{d}{dt}\|X\|^2 - 2\tilde\Gamma \kappa \|X\|^2.
\eq
On the other hand, we get
\[
\dot v_i \cdot x_i = \frac{d (x_i \cdot v_i)}{dt} - |v_i|^2.
\]
This together with \eqref{G-5} yields
\begin{equation} \label{G-6}
\frac{d}{dt}\lt(2m \lal X, V\ral + \gamma \|X\|^2 \rt) + 2\tilde\Gamma \kappa \|X\|^2 \leq 2m\|V\|^2.
\end{equation}

\vspace{0.2cm}

\noindent $\bullet$~ {\bf Step C}:  Multiplying \eqref{G-6} by $\e$ and adding \eqref{G-3}, we get
\[
\frac{d}{dt} \me_\e(X(t),V(t)) + (2\gamma -1 - 2m\e)\|V\|^2 + \kappa (2\tilde\Gamma \e - \kappa)\|X\|^2 \leq 0 \quad \mbox{for all } t > 0.
\]
Then we use the inequalities $2\gamma -1 - 2m\e \geq 0$ and $2\tilde\Gamma \geq \kappa$ to obtain the desired result.
\end{proof}
As a direct corollary of Proposition \ref{P6.1} and the relation \eqref{G-2-1}, we obtain an exponential stability estimate of \eqref{G-1}.  

\begin{corollary}  \label{C6.1} Suppose that the same assumptions in Proposition \ref{P6.1} hold, and let $\e \in (\kappa/(2\tilde\Gamma), (2\gamma-1)/(2m))$. Then, there exists a positive constant $C_3$ such that 
\[ \me_\e(X(t),V(t))  \leq \me_\e(X^{in},V^{in}) e^{-C_3 t}, \quad t \geq 0. \]
\end{corollary}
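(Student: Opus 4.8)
The plan is to read the claim off directly from Proposition \ref{P6.1} combined with the norm equivalence \eqref{G-2-1} and a scalar Grönwall argument; no new idea is needed. First I would fix $\e$ in the open interval $(\kappa/(2\tilde\Gamma),\,(2\gamma-1)/(2m))$, so that the two coefficients appearing in Proposition \ref{P6.1} are \emph{both strictly positive}: indeed $2\gamma - 1 - 2m\e > 0$ since $\e < (2\gamma-1)/(2m)$, and $\kappa(2\tilde\Gamma\e - \kappa) > 0$ since $\e > \kappa/(2\tilde\Gamma)$. Note also that $\e < (2\gamma-1)/(2m) < \gamma/(2m)$, so the equivalence \eqref{G-2-1}, with constants $C_0, C_1 > 0$, is available for this choice of $\e$.

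Next I would set $\lambda := \min\{\, 2\gamma - 1 - 2m\e,\ \kappa(2\tilde\Gamma\e - \kappa) \,\} > 0$. Proposition \ref{P6.1} then reads
\[
\frac{d}{dt}\me_\e(X(t),V(t)) + \lambda\,\|(X,V)(t)\|^2 \leq 0, \qquad t > 0.
\]
Using the upper bound in \eqref{G-2-1}, namely $\me_\e(X,V) \leq C_1\|(X,V)\|^2$, equivalently $\|(X,V)\|^2 \geq C_1^{-1}\me_\e(X,V)$, this turns into the closed differential inequality
\[
\frac{d}{dt}\me_\e(X(t),V(t)) \leq -\frac{\lambda}{C_1}\,\me_\e(X(t),V(t)), \qquad t > 0,
\]
and Grönwall's lemma yields $\me_\e(X(t),V(t)) \leq \me_\e(X^{in},V^{in})\,e^{-C_3 t}$ with $C_3 := \lambda/C_1 > 0$, which is exactly the assertion.

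There is essentially no obstacle here; the only point deserving a line of care is the strict positivity of $\lambda$, which is precisely why the statement restricts $\e$ to the open interval rather than the closed one allowed in Proposition \ref{P6.1} — at either endpoint one of the two coefficients degenerates to zero. If one further wants the genuine $W_2$-type stability bound used in Theorem \ref{T3.6}, namely $\|(X,V)(t)\|^2 \leq (C_1/C_0)\,\|(X^{in},V^{in})\|^2\,e^{-C_3 t}$, one simply combines the decay of $\me_\e$ just obtained with the lower bound $C_0\|(X,V)\|^2 \leq \me_\e(X,V)$ from \eqref{G-2-1}.
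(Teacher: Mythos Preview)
Your proof is correct and follows essentially the same route as the paper: apply Proposition \ref{P6.1} to get $\frac{d}{dt}\me_\e + \lambda\|(X,V)\|^2 \leq 0$, use the upper bound in \eqref{G-2-1} to close the inequality in $\me_\e$, and conclude by Gr\"onwall with $C_3 = \lambda/C_1$. Your additional remarks on the strict positivity of $\lambda$ at the open endpoints and on the applicability of \eqref{G-2-1} (since $\e < (2\gamma-1)/(2m) < \gamma/(2m)$) are helpful clarifications not spelled out in the paper.
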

\begin{proof} It follows from Proposition \ref{P6.1} that we have
\begin{equation} \label{G-6}
 \frac{d}{dt} \me_\e(X(t),V(t)) + C_2 \|(X,V)\|^2 \leq 0 \quad \mbox{for all }~t > 0,
\end{equation} 
 where $c_1$ is a positive constant defined as follows.
\[
C_2:= \min\lt\{(2\gamma -1 - 2m\e), \kappa(2\tilde\Gamma \e - \kappa)\rt\} > 0.
\]
Then, we combine \eqref{G-6} and the relation \eqref{G-2-1} to obtain
\begin{equation*}
 \frac{d}{dt} \me_\e(X(t),V(t)) + \frac{C_2}{C_1} {\mathcal E}_\e(X(t),V(t)) \leq 0, \quad t > 0.
\end{equation*}
This implies the desired decay estimate with $C_3 := C_2/C_1$. 
\end{proof}

\subsection{Global existence of measure-valued solutions} \label{sec:6.3}

We now apply the exponential stability estimate obtained in Proposition \ref{P6.1} to prove Theorem \ref{T3.6} the uniform-in-time mean-field limit. As we mentioned, we use the strategy proposed in \cite{H-K-Z}. We also refer to \cite{BGM} for the stochastic case. 

\begin{proof}[{\bf Proof of Theorem \ref{T3.6}}] It follows from Corollary \ref{C6.1} that
\[
\|X(t) \|^2 + \|V(t))\|^2 \leq \frac{C_1}{C_0} \Big(  \|X^{in} \|^2 + \|V^{in}\|^2 \Big) \quad \mbox{for all } t \geq 0,
\]
i.e., we have the following uniform-in-time stability estimate:
\[
\|\Theta(t) - \tilde\Theta(t) \| + \|\Omega(t) - {\tilde\Omega}(t) \| \leq C_2 \lt( \|\Theta^{in} - {\tilde\Theta}^{in} \| + \| \Omega^{in} - {\tilde\Omega}^{in} \|\rt) \quad \mbox{for all } t \geq 0,
\] 
for some $C_2 > 0$. Then by the direct application of \cite[Corollary 1.1]{H-K-Z}. This completes the proof.
\end{proof}

\subsection{Asymptotic behavior of measure-valued solution} \label{sec:6.4}
\noindent In this subsection, we study the asymptotic formation of phase-locked states of the kinetic Kuramoto mode \eqref{G-1} with positive inertia $m > 0$. For  system \eqref{G-1}, 
the order parameter $R_k$ and the average phase of oscillators $\varphi_k$ are defined by the relationship
\[
R_k(t)e^{{\mathrm i}\varphi_k(t)} :=\int_\T \int_{\R } e^{{\mathrm i}\theta}f(\theta, \omega, t)\, d \omega d\theta, \quad t \geq 0,
\]
where the subscript ``$k$" in $R_k$ and $\varphi_k$ denote the ``{\it kinetic}".    Using these order parameters, the forcing term $\mathcal{F}_a[f]$ can be rewritten as
\[
\mathcal{F}_a[f](\theta,\omega,t) = -\frac1m\lt( \gamma \omega + \kappa R_k \sin(\theta-\varphi_k)\rt).
\]
As discussed before, to get the uniform-in-time mean-field
limit and global-in-time measure-valued solutions to  \eqref{G-1},
we need suitable assumptions on the initial data and parameters.
However, in this section, we assume that there exists a global-in-time measure valued solution $f \in \mc_w(\R_+; \pp_2(\T \times \R))$.  

Consider the characteristic system associated to \eqref{G-1}: for $(\theta,\omega) \in \T \times \R$, 
\begin{equation}\label{G-7}
\begin{cases}
\displaystyle \frac{\pa}{\partial t} q(t;0,\theta,\omega) =p(t;0,\theta,\omega),\\[2mm]
\displaystyle \frac{\pa}{\pa t} p(t;0,\theta,\omega) = \mathcal{F}_a[f](q(t;0,\theta,\omega),p(t;0,\theta,\omega),t), \\[2mm]
\displaystyle \lt( q(0;0,\theta,\omega), p(0;0,\theta,\omega)\rt) = (\theta,\omega).
\end{cases}
\end{equation}
The above characteristic system is well-defined, since the force field is globally Lipschitz. Since $(q(t;0,\cdot,\cdot), p(t;0,\cdot,\cdot)) \# f^{in} = f(t)$, we have
\begin{align*}
\begin{aligned} 
&\mathcal{F}_a[f](q(t;0,\theta,\omega),p(t;0,\theta,\omega),t)\cr
&\hspace{0.2cm} =-\frac1m\lt( \gamma p(t;0,\theta,\omega) + \kappa \int_{\T \times \R}\sin(\theta_* - q(t;0,\theta,\omega)) f(\theta_*, \omega_*,t) \,d \theta_* d \omega_* \rt) \\
&\hspace{0.2cm} =-\frac1m\lt( \gamma p(t;0,\theta,\omega) + \kappa \int_{\T \times \R}\sin(q(t;0,\theta_*,\omega_*) - q(t;0,\theta,\omega)) f^{in}(\theta_*, \omega_*) 
\,d \theta_* d \omega_* \rt).
\end{aligned}
\end{align*}
For notational simplicity, we set
\[
q_t(\theta,\omega) := q(t;0,\theta,\omega) \quad \mbox{and} \quad p_t(\theta,\omega) := p(t;0,\theta,\omega).
\]
We also introduce energy functions:
\begin{align*}
\begin{aligned}
{\bar \me}(t) &:= \frac12 \int_{\T \times \R} p_t^2 (\theta,\omega)\, f^{in}(\theta, \omega) \, d\theta d\omega \cr
&+ \frac{\kappa}{2m}\int_{\T^2 \times \R^2} \lt(1 - \cos(q_t(\theta_*,\omega_*) - q_t(\theta,\omega))\rt)\,f^{in} (\theta_*, \omega_*) f^{in}(\theta, \omega) \,d\theta_* d\omega_* d\theta d\omega \cr
&=: {\bar \me}_K(t) + {\bar \me}_P(t).
\end{aligned}
\end{align*}
Note that the forcing term $\mathcal{F}_a[f]$ can be rewritten as follows:
\begin{equation} \label{G-9}
\mathcal{F}_a[f](q_t(\theta,\omega),p_t(\theta,\omega)) = -\frac1m\lt(\gamma q_t(\theta,\omega) + \kappa R_k(t)\sin(p_t(\theta,\omega) - \varphi_k(t))  \rt).
\end{equation}
\begin{lemma}\label{L6.2} Let $d\mu_t = f(t) d\theta d\omega \in \mc_w(\R_+; \pp_2(\T \times \R))$ be a measure-valued solution to the equation \eqref{G-1}. Then, we have the following assertions.
\begin{itemize}
\item[(i)] (Propagation of averages):
\[
\int_{\T \times \R} q_t(\theta,\omega)\,f^{in}(\theta, \omega) d\theta d\omega = \int_{\T \times \R} \theta \,f^{in}(\theta, \omega)\, d\theta d\omega + m(1 - e^{-\frac{\gamma t}{m}}) \int_{\T \times \R} \omega \,f^{in}(\theta, \omega)\, d\theta d\omega 
\]
and
\[
\int_{\T \times \R} p_t(\theta,\omega)\,f^{in}(\theta, \omega)\, d\theta d\omega = e^{-\frac{\gamma t}{m}} \int_{\T \times \R} \omega \,f^{in}(\theta, \omega)\, d\theta d\omega.
\]
\item[(ii)] (Energy estimate): 
\[
\frac{d{\bar \me}(t)}{dt} + \frac{2\gamma}{m} {\bar \me}_K(t) = 0 \quad \mbox{for} \quad t > 0.
\]
\item[(iii)] (Decay of kinetic energy):
\[
{\bar \me}_K(t)= \frac12 \int_{\T \times \R} p_t^2 (\theta,\omega)\, f^{in}(\theta, \omega)\, d\theta d\omega \to 0 \quad \mbox{as} \quad t \to \infty.
\]
\end{itemize}
\end{lemma}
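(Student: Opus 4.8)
The plan is to lift the particle-level identities of Lemma~\ref{L2.1}, Proposition~\ref{P4.1} and Corollary~\ref{C4.1} to the Lagrangian picture along the characteristic flow $(q_t,p_t)$ of \eqref{G-7}. Since the force field in \eqref{G-7} is globally Lipschitz and affine in $p$, the flow is $\mc^1$ in $t$; moreover, because the coupling integral in \eqref{G-7} is bounded by $\kappa$, a Gronwall estimate on $|p_t|$ exactly as in Remark~\ref{R2.1} gives the pointwise bound $|p_t(\theta,\omega)|\le|\omega|+\kappa/\gamma$, so that together with $f^{in}\in\pp_2(\T\times\R)$ every integral below is finite and differentiation under the integral sign is justified. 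The only structural input beyond this is that $(\theta,\omega,\theta_*,\omega_*)\mapsto\sin\big(q_t(\theta_*,\omega_*)-q_t(\theta,\omega)\big)$ is odd under the exchange $(\theta,\omega)\leftrightarrow(\theta_*,\omega_*)$, which is a symmetry of the product measure $f^{in}\otimes f^{in}$.

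For (i), I would set $\bar\Theta_c(t):=\int_{\T\times\R}q_t\,f^{in}\,d\theta d\omega$ and $\bar\Omega_c(t):=\int_{\T\times\R}p_t\,f^{in}\,d\theta d\omega$, differentiate in $t$, and substitute \eqref{G-7}. The coupling term integrates to zero by the oddness just noted, leaving $\dot{\bar\Theta}_c=\bar\Omega_c$ and $\dot{\bar\Omega}_c=-\tfrac{\gamma}{m}\bar\Omega_c$; these are precisely the equations satisfied by $(\theta_c,\omega_c)$ in Lemma~\ref{L2.1} with $\nu_c=0$, so both claimed identities follow by direct integration.

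Part~(ii) is the computational heart and parallels Proposition~\ref{P4.1}. Writing $f^{in}=f^{in}(\theta,\omega)$, $f^{in}_*=f^{in}(\theta_*,\omega_*)$ and $q_t=q_t(\theta,\omega)$, $q_{t,*}=q_t(\theta_*,\omega_*)$, differentiating $\bar\me_K$ and inserting $\dot p_t$ from \eqref{G-7} gives
\[
\frac{d\bar\me_K}{dt}=-\frac{2\gamma}{m}\bar\me_K+\frac{\kappa}{m}\,\mathcal{B}(t),\qquad
\mathcal{B}(t):=\int_{\T^2\times\R^2} p_t(\theta,\omega)\,\sin\big(q_{t,*}-q_t\big)\,f^{in}_*\,f^{in}\,d\theta_*d\omega_*d\theta d\omega.
\]
Symmetrizing $\mathcal{B}$ in $(\theta,\omega)\leftrightarrow(\theta_*,\omega_*)$ yields $2\mathcal{B}(t)=\int\big(p_t(\theta,\omega)-p_t(\theta_*,\omega_*)\big)\sin(q_{t,*}-q_t)\,f^{in}_*\,f^{in}$, and since $\tfrac{d}{dt}\cos(q_{t,*}-q_t)=\big(p_t(\theta,\omega)-p_t(\theta_*,\omega_*)\big)\sin(q_{t,*}-q_t)$, this identifies $\tfrac{\kappa}{m}\mathcal{B}(t)=-\tfrac{d}{dt}\bar\me_P(t)$ from the definition of $\bar\me_P$. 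Adding the two relations gives $\tfrac{d}{dt}(\bar\me_K+\bar\me_P)+\tfrac{2\gamma}{m}\bar\me_K=0$, which is the asserted energy identity.

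For (iii), I would argue exactly as in Corollary~\ref{C4.1}. Since $0\le\bar\me_P\le\kappa/m$, part~(ii) gives $\bar\me_K(t)\le\bar\me(t)\le\bar\me(0)$ and, after integration, $\int_0^\infty\bar\me_K(s)\,ds\le\tfrac{m}{2\gamma}\bar\me(0)<\infty$; moreover $|\mathcal{B}(t)|\le\int_{\T\times\R}|p_t|\,f^{in}\le(2\bar\me_K(t))^{1/2}\le(2\bar\me(0))^{1/2}$, so $\big|\tfrac{d}{dt}\bar\me_K\big|\le\tfrac{2\gamma}{m}\bar\me(0)+\tfrac{\kappa}{m}(2\bar\me(0))^{1/2}$ is bounded and $\bar\me_K$ is uniformly continuous. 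Barbalat's lemma then forces $\bar\me_K(t)\to0$ as $t\to\infty$. The only genuine difficulty is bookkeeping: checking that the antisymmetrization and symmetrization identities survive the passage from the finite double sums of Section~\ref{sec:4} to double integrals against $f^{in}\otimes f^{in}$, and that differentiation under the integral is legitimate — both immediate once the pointwise bound on $p_t$ and $f^{in}\in\pp_2$ are in hand.
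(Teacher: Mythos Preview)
Your proposal is correct and follows essentially the same route as the paper: antisymmetry of the coupling to obtain the closed ODEs for the averages in (i), the symmetrization of the cross term to identify $\tfrac{\kappa}{m}\mathcal{B}(t)=-\tfrac{d}{dt}\bar\me_P(t)$ in (ii), and in (iii) integrability of $\bar\me_K$ from (ii) combined with a uniform bound on $\big|\tfrac{d}{dt}\bar\me_K\big|$ and Barbalat's lemma. Your added care about the pointwise bound on $p_t$ and differentiation under the integral is a welcome bit of rigor that the paper leaves implicit.
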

\begin{proof} The proofs of (ii) and (iii) are very similar to the proofs of Proposition \ref{P4.1} and Corollary \ref{C4.1}.  \newline

\noindent (i) Due to the antisymmetry, we obtain
\[
\frac{d}{dt}\int_{\T \times \R} p_t(\theta,\omega) f^{in}(\theta, \omega)\, d\theta d\omega = -\frac{\gamma}{m} \int_{\T \times \R} p_t(\theta,\omega)f^{in}(\theta, \omega)\, d\theta d\omega.
\]
Additionally, by definition we have
\[
\frac{d}{dt}\int_{\T \times \R} q_t(\theta,\omega)f^{in}(\theta, \omega)\, d\theta d\omega = \int_{\T \times \R} p_t(\theta,\omega)f^{in}(\theta, \omega)\, d\theta d\omega.
\]
Solving the above two differential equations, we obtain
\[
\int_{\T \times \R} q_t(\theta,\omega)\,f^{in} (\theta, \omega)\,d\theta d\omega = \int_{\T \times \R} \theta \,f^{in} (\theta, \omega)\,d\theta d\omega + m (1 - e^{-\frac{\gamma t}{m}}) \int_{\T \times \R} \omega \,f^{in} (\theta, \omega)\,d\theta d\omega,
\]
and
\[
\int_{\T \times \R} p_t(\theta,\omega)\,f^{in} (\theta, \omega)\,d\theta d\omega = e^{-\frac{\gamma t}{m}} \int_{\T \times \R} \omega \,f^{in} (\theta, \omega)\,d\theta d\omega.
\]

\vspace{0.2cm}

\noindent (ii) By direct calculation, we have
\begin{align*}
\begin{aligned}
&\frac12\frac{d}{dt}\int_{\T \times \R} |p_t(\theta,\omega)|^2  f^{in} (\theta, \omega) \,d\theta d\omega \\
& \hspace{1cm}  = \int_{\T \times \R} p_t(\theta,\omega) \cdot \pa_t p_t(\theta,\omega)\,f^{in} (\theta, \omega) \,d\theta d\omega \\
&  \hspace{1cm}  = - \frac{\gamma}{m} \int_{\T \times \R} p_t^2 (\theta,\omega)\, f^{in}(\theta, \omega) \, d\theta d\omega  \\
&  \hspace{1cm}   +\frac{\kappa}{m}\int_{\T^2 \times \R^2} \sin(q_t(\theta_*,\omega_*) - q_t(\theta,\omega)) p_t(\theta,\omega)\,f^{in}(\theta_*, \omega_*) 
f^{in}(\theta, \omega)  \,d\theta_* d\theta d\omega_* d\omega \\
&  \hspace{1cm}  = - \frac{\gamma}{m} \int_{\T \times \R} p_t^2 (\theta,\omega)\, f^{in}(\theta, \omega) \,d\theta d\omega \\
&  \hspace{1cm}  +\frac{\kappa}{2m}\frac{d}{dt}\int_{\T^2 \times \R^2} \cos(q_t(\theta_*,\omega_*) - q_t(\theta,\omega))\,f^{in}(\theta_*, \omega_*) f^{in}(\theta, \omega)\, d\theta_* d\theta 
d\omega_* d\omega.
\end{aligned}
\end{align*}
Thus, we obtain
\[
\frac{d}{dt} {\bar \me}(t) + \frac{2\gamma}{m} {\bar \me}_K(t) = 0.
\]
Here, we used the fact that $\int_{\T \times \R} f^{in}(\theta, \omega) \,d\theta d\omega = 1$. 

\vspace{0.2cm}

\noindent (iii) To obtain the desired result, it is enough to show that $\lt|\dot {\bar \me}_K (t)\rt| \leq C$ for some $C > 0$ for all $t \geq 0$. In fact, proceeding as in Corollary \ref{C4.1}, we have
\[
\lt|\frac{d}{dt} {\bar \me}_K(t) \rt| \leq \frac2m {\bar \me}(0) + \frac{\kappa \sqrt2}{m}\sqrt{{\bar \me}(0)} < 0 \quad \mbox{uniformly in }t.
\]
This together with the integrability of $ {\bar \me}_K $ concludes the desired decay estimate.
\end{proof}
\begin{remark}\label{R6.2}
\noindent 1. Since the interaction term is bounded by the coupling strength $\kappa$, it follows from \eqref{G-7} that we have 
\begin{equation} \label{G-6}
|p_t(\theta,\omega)| \leq |\omega| e^{-\frac{\gamma t}{m}} + m \kappa(1 - e^{\frac{\gamma t}{m}}) \leq \max\{ |\omega|, m\kappa\} \quad \mbox{for every} \quad (\theta,\omega) \in \mbox{supp}(f^{in}).
\end{equation}
This implies that if $f^{in}$ is compactly supported in $\omega$, then support of $f(t)$ in $\omega$ uniformly bounded in $t$. \newline

\noindent 2. Since $(q_t(\cdot,\cdot),p_t(\cdot,\cdot))\# f^{in} = f(t)$, Lemma \ref{L6.2} and the estimate \eqref{G-6} imply that if $f^{in}$ is compactly supported in $\omega$, then we obtain
\begin{align*}
\begin{aligned}
&\frac12 \frac{d}{dt}\lt(\int_{\T \times \R} w^2 f (\theta, \omega)\, d\theta d\omega + \frac{\kappa}{2}\int_{\T^2 \times \R^2} \lt(1 - \cos(\theta_* - \theta)\rt)f(\theta_*, \omega_*,t) f(\theta, \omega, t) 
d\theta_* d\theta d\omega_* d\omega \rt)\cr
&\vspace{0.5cm} = - \frac{\gamma}{m} \int_{\T \times \R} w^2\, f(\theta, \omega)\, d\theta d\omega,
\end{aligned}
\end{align*}
and
\[
\int_{\T \times \R} w^2\, f (\theta, \omega)\,d\theta d\omega \to 0 \quad \mbox{as} \quad t \to \infty.
\]
\newline

\noindent 3. Without loss of generality, we set
\[
\int_{\T \times \R} \theta \,f_0(d\theta,d\omega) = \int_{\T \times \R} \omega \,f_0(d\theta,d\omega) =0.
\]
\end{remark}
\noindent We now present a characterization of  stationary solutions $f^*$ for the equation \eqref{G-1} which corresponds to the continuum version of the content in Section \ref{sec:5.4}.

\begin{proposition} \label{P6.2}
\cite{BCM, H-K-R}
The function $f_e = f_e(\theta,\omega)$ is a stationary solution to \eqref{G-1} if and only if one of the following identities holds:
\begin{itemize}
\item[(i)] $R_k \equiv 0$.
\item[(ii)] $f_e(\theta,\omega)$ is of type $(c_1,c_2)$, which means that $f_e(\theta,\omega) = \lt(c_1\delta_{\varphi^*}(\theta) + c_2\delta_{\varphi^*+\pi}(\theta)\rt)\otimes \delta_0(\omega)$, where $c_1 > c_2 \geq 0$ satisfying $c_1 + c_2 = 1$,
\end{itemize}

\end{proposition}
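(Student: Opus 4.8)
The plan is to pin down a stationary $f_e$ in two stages: first to force its support onto the line $\{\omega=0\}$ using the energy dissipation already recorded in Lemma~\ref{L6.2}, and then to read off the admissible $\theta$-profiles from what remains of the stationarity identity. Throughout, ``stationary solution'' means a measure-valued solution $f\in\mc_w(\R_+;\pp_2(\T\times\R))$ with $f(t)\equiv f_e$.

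For the first stage, observe that the pushforward identity $(q_t,p_t)\#f^{in}=f(t)$ from Section~\ref{sec:6.4} lets us rewrite the kinetic energy of Lemma~\ref{L6.2} as $\bar{\me}_K(t)=\frac12\int_{\T\times\R}\omega^2\,f_e(d\theta\,d\omega)$, which is manifestly independent of $t$ once $f(t)\equiv f_e$. On the other hand, Lemma~\ref{L6.2}(iii) gives $\bar{\me}_K(t)\to 0$ as $t\to\infty$. Comparing the two forces $\int_{\T\times\R}\omega^2\,f_e=0$, hence $f_e=\rho\otimes\delta_0(\omega)$ for some $\rho\in\pp(\T)$. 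I then set $R_k e^{\mi\varphi^*}:=\int_\T e^{\mi\theta}\,\rho(d\theta)$, a now time-independent quantity.

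For the second stage, I would insert time-independent test functions into the weak formulation of \eqref{G-1} (Definition~\ref{D6.1}(3)) applied to $f_e$, obtaining
\[
\int_{\T\times\R}\Big(\omega\,\partial_\theta\phi + \mathcal{F}_a[f_e]\,\partial_\omega\phi\Big)\,f_e(d\theta\,d\omega)=0\qquad\text{for all }\phi\in\mc^1_0(\T\times\R).
\]
Since $f_e$ is carried by $\{\omega=0\}$, the drift term contributes nothing and $\mathcal{F}_a[f_e](\theta,0)=-\frac{\kappa R_k}{m}\sin(\theta-\varphi^*)$; choosing $\phi$ so that $\partial_\omega\phi(\cdot,0)$ realizes an arbitrary smooth $\psi$ on $\T$ (e.g.\ $\phi(\theta,\omega)=\omega\,\eta(\omega)\,\psi(\theta)$ with $\eta\equiv 1$ near $0$) reduces the identity to $\int_\T\sin(\theta-\varphi^*)\,\psi(\theta)\,\rho(d\theta)=0$, whence $\sin(\theta-\varphi^*)\,\rho\equiv0$ as a measure. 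If $R_k=0$ this is vacuous and we are in case (i). If $R_k>0$, then $\varphi^*$ is determined and $\mathrm{supp}\,\rho\subseteq\{\varphi^*,\varphi^*+\pi\}$, so $\rho=c_1\delta_{\varphi^*}+c_2\delta_{\varphi^*+\pi}$ with $c_i\ge0$, $c_1+c_2=1$; evaluating the order parameter gives $(c_1-c_2)e^{\mi\varphi^*}=R_k e^{\mi\varphi^*}$, so $c_1-c_2=R_k>0$, i.e.\ $c_1>c_2\ge0$, which is exactly a state of type $(c_1,c_2)$, case (ii). The converse is a direct check: in either family $\omega f_e=0$ and $\mathcal{F}_a[f_e]\,f_e=-\frac{\kappa R_k}{m}\sin(\theta-\varphi^*)\,\rho\otimes\delta_0$ vanishes (because $R_k=0$, or $\sin(\theta-\varphi^*)=0$ on $\mathrm{supp}\,\rho$), so $f_e$ is stationary.

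The step I expect to be the crux is the first one: I must make sure that the kinetic energy computed along characteristics issued from $f^{in}$ genuinely coincides with $\frac12\int\omega^2 f(t)$ through the pushforward relation, so that stationarity really does freeze it and the decay in Lemma~\ref{L6.2}(iii) drives it to zero. After concentration on $\{\omega=0\}$ is secured, the remainder is routine distributional bookkeeping, the only care point being the (standard) density of $\{\partial_\omega\phi(\cdot,0):\phi\in\mc^1_0(\T\times\R)\}$ in $\mc(\T)$, which legitimizes passing from the tested identity to $\sin(\theta-\varphi^*)\,\rho\equiv0$.
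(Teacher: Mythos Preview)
Your proof is correct and, unlike the paper, self-contained: the paper does not argue Proposition~\ref{P6.2} at all but simply defers to \cite{BCM, H-K-R}. Your two-stage strategy---forcing concentration on $\{\omega=0\}$ via the energy dissipation, then reading off the $\theta$-profile from the stationary weak formulation---is precisely how the cited results proceed, and all steps check out. The point you flag as the crux (identifying $\bar{\me}_K(t)$ with $\frac12\int\omega^2\,f(t)$ via the pushforward) is unproblematic here: for a stationary $f_e$ the force field is autonomous and globally Lipschitz, so characteristics exist globally and transport the measure to itself.

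One minor simplification for the first stage: rather than invoking Lemma~\ref{L6.2}(iii) to say that the constant $\bar{\me}_K$ tends to zero, it is more direct to use Lemma~\ref{L6.2}(ii). Since $f(t)\equiv f_e$ makes the total energy $\bar{\me}$ constant, the identity $\dfrac{d\bar{\me}}{dt}=-\dfrac{2\gamma}{m}\bar{\me}_K$ forces $\bar{\me}_K=0$ immediately, without appealing to the Barbalat-type argument behind (iii).
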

\begin{proof}
We refer to \cite{BCM, H-K-R} for a detailed proof. 
\end{proof}

We are now ready to provide a proof for Theorem \ref{T3.7}.  \newline
\begin{proof}[{\bf Proof of Theorem \ref{T3.7}}]
We proceed as in Proposition \ref{P4.2}. Note that our goal is to show that
\[
\int_{\T \times \R} |\gamma p_t(\theta,\omega) + \kappa R_k \sin(q_t(\theta,\omega) - \varphi_k)|^2 f^{in} (\theta, \omega)\,d\theta d\omega \to 0 \quad \mbox{as} \quad t \to \infty.
\]
This and the decay of kinetic energy yield
\begin{equation} \label{G-11}
\int_{\T \times \R} R_k^2\sin^2(q_t(\theta,\omega) - \varphi_k)f^{in}(\theta,\omega)\, d\theta d\omega \to 0 \quad \mbox{as} \quad t \to \infty.
\end{equation}
Next, we consider two cases. \newline

\noindent $\bullet$ Case A ($R_k(0) = 0$): In this case, we get 
\[
\int_{\T \times \R} \omega^2 f^{in}(\theta,\omega)\,d\theta d\omega = 0.
\]
Here we used assumption \eqref{su_lt}. This implies that the support of $f^{in}$ in $\omega$ is measure zero. Since $p_t$ is absolutely continuous, this yields the support of $f(t)$ in $\omega$ is measure zero. Thus the support of $f(t)$, in both $\theta$ and $\omega$, is measure zero, and this concludes that $R_k(t) = 0$ for all $t \geq 0$.
\vspace{0.2cm}

\noindent $\bullet$ Case B ($R_k(0) > 0$): By the almost same argument in Section \ref{sec:4.3}, just replacing $\me_K$ and $R_p$ with $\bar \me_K$ and $R_k$, respectively, there exists a $R_* >0$ such that 
\[ R_k (t) \geq R_* > 0. \]
Then, this and \eqref{G-11} yield
\bq\label{G-12}
\int_{\T \times \R} \sin^2(q_t(\theta,\omega) - \varphi_k)f^{in}(\theta,\omega)\,  d\theta d\omega \to 0 \quad \mbox{as} \quad t \to \infty.
\eq

Note that the above observation was enough to show the existence of $\displaystyle R^*:=\lim_{t \to \infty} R_k(t)$ at the particle level, as we discussed in the proof of Theorem \ref{T3.4}. 
However, at the continuum level, we need more information. In fact, we require the integrability of \eqref{G-12} with respect to $t$. For this, we estimate
\begin{align*}
\begin{aligned} 
&\frac12\frac{d}{dt}\int_{\T \times \R} |\gamma p_t(\theta,\omega) + \kappa R_k \sin(\Theta_t(\theta,\omega) - \bar\varphi)|^2 f^{in}(\theta,\omega)\,  d\theta d\omega\cr
& \hspace{1cm} = \int_{\T \times \R} \lt( \gamma p_t(\theta,\omega) + \kappa R_k \sin(\Theta_t(\theta,\omega) - \bar\varphi)\rt)\cr
& \hspace{2cm} \times \pa_t \lt(\gamma p_t(\theta,\omega) + \kappa R_k \sin(q_t(\theta,\omega) - \varphi_k)\rt) f^{in} (\theta,\omega)\, d\theta d\omega \cr
& \hspace{1cm}= -\frac1m\int_{\T \times \R}|\gamma p_t(\theta,\omega) + \kappa R_k \sin(q_t(\theta,\omega) - \varphi_k)|^2f^{in} (\theta,\omega)\, d\theta d\omega\cr
& \hspace{1cm} - \kappa \int_{\T^2 \times \R^2}\lt (\gamma p_t(\theta,\omega) + \kappa R_k \sin(q_t(\theta,\omega) - \varphi_k)\rt) \cos(q_t(\theta,\omega) - q_t(\theta_*,\omega_*)) \cr
& \hspace{2cm} \times \lt(p_t(\theta,\omega) - p_t(\theta_*,\omega_*)\rt)f^{in} (\theta,\omega)f^{in} (\theta_*,\omega_*) \,d\theta d\omega d\theta_*d\omega_*,\cr
& \hspace{1cm}  =: -\frac1m\int_{\T \times \R}|\gamma p_t(\theta,\omega) + \kappa R_k \sin(q_t(\theta,\omega) - \varphi_k)|^2f^{in} (\theta,\omega)\, d\theta d\omega  + 
 {\mathcal I}_{3},
\end{aligned}
\end{align*}
due to \eqref{G-9}. Now, the term ${\mathcal I}_3$ can be further estimated as follows.
\begin{align*}
\begin{aligned} 
{\mathcal I}_3 &\leq \kappa \int_{\T^2 \times \R^2} |\gamma p_t(\theta,\omega) + \kappa R_k \sin(q_t(\theta,\omega) - \varphi_k)|\cr
& \qquad \times |p_t(\theta,\omega) - p_t(\theta_*,\omega_*)|f^{in}(\theta, \omega) f^{in}(\theta_*, \omega_*) \,d\theta_* d\theta d\omega_* d\omega \cr
&\leq \kappa \lt(\int_{\T \times \R}| \gamma p_t(\theta,\omega) + \kappa R_k \sin(q_t(\theta,\omega) - \varphi_k)|^2 f^{in}(\theta, \omega) \,d\theta d\omega \rt)^{1/2} \cr
& \qquad \times \lt(\int_{\T^2 \times \R^2 }|p_t(\theta,\omega) - p_t(\theta_*,\omega_*)|^2f^{in}(\theta, \omega) f^{in}(\theta_*,\omega_*) \,d\theta_* d\theta d\omega_* d\omega \rt)^{1/2}\cr
&\leq 2\kappa \lt(\int_{\T \times \R}|\gamma p_t(\theta,\omega) + \kappa R_k \sin(q_t(\theta,\omega) - \varphi_k)|^2 f^{in}(\theta, \omega) \,d\theta d\omega \rt)^{1/2}\cr
& \qquad \times \lt(\int_{\T \times \R}|p_t(\theta,\omega)|^2 f^{in}(\theta, \omega)\, d\theta d\omega \rt)^{1/2}\cr
&\leq 2\kappa \e\int_{\T \times \R}| \gamma q_t(\theta,\omega) + \kappa R_k \sin(q_t(\theta,\omega) - \varphi_k)|^2 f^{in}(\theta, \omega) \,d\theta d\omega \cr
& \quad + \frac{\kappa}{2\e}\int_{\T \times \R}|p_t(\theta,\omega)|^2 f^{in}(\theta, \omega)\, d\theta d\omega,
\end{aligned}
\end{align*}
where $\e > 0$ will be determined later. Thus, by combining the above estimates, we obtain
\begin{align*}
\begin{aligned}
&\frac12\frac{d}{dt}\int_{\T \times \R} |\gamma p_t(\theta,\omega) + \kappa R_k \sin(q_t(\theta,\omega) - \varphi_k)|^2 f^{in} (\theta, \omega)\,d\theta d\omega \cr
& \hspace{1cm} \leq - \frac1m\lt(\gamma - 2\kappa m\e \rt)\int_{\T \times \R} |p_t(\theta,\omega) + \kappa R_k \sin(q_t(\theta,\omega) - \varphi_k)|^2 f^{in} (\theta, \omega)\,d\theta d\omega \cr
& \hspace{1.5cm} + \frac{\kappa}{2\e}\int_{\T \times \R}|p_t(\theta,\omega)|^2 f^{in} (\theta, \omega)\,d\theta d\omega.
\end{aligned}
\end{align*}
Then, we choose $\e>0$ such that $\gamma - 2Km\e > 0$ and use the integrality of $\bar\me_K$ together with the fact $R(t) \geq R_*>0$ to find
\[
\int_0^\infty \int_{\T \times \R} \sin^2(\Theta_t(\theta,\omega) - \bar\varphi) f^{in}(\theta, \omega)\, d\theta d\omega \,dt < \infty.
\]
Note that
\[
\dot{R}_k(t) = -\int_{\T \times \R} \sin(q_t(\theta,\omega) - \varphi_k(t)) p_t(\theta,\omega)f^{in} (\theta, \omega)\,d\theta d\omega.
\]
Then, by the above estimate, we have
\begin{align*}
\begin{aligned}
\int_0^\infty |\dot{R}_k(t)|\,dt &\leq \lt(\int_0^\infty \int_{\T \times \R} \sin^2(q_t(\theta,\omega) - \varphi_k(t))f^{in} (\theta, \omega,t)\,d\theta d\omega \,dt\rt)^{1/2}\cr
&\times \lt(\int_0^\infty \int_{\T \times \R} |p_t(\theta,\omega)|^2f^{in} (\theta, \omega,t)\,d\theta d\omega \,dt\rt)^{1/2}.
\end{aligned}
\end{align*}
This provides the following relations:
\[
0 < R^* = \lim_{t \to \infty}R_k(t) = R_k(0) + \int_0^\infty \dot{R}_k(t)\,dt,
\]
Consequently, due to Proposition \ref{P6.2},  $f$ converges to the bi-polar state. Thus, we verified the third assertion in Theorem \ref{T3.7}.
\end{proof}

%
%

\section{Conclusion} \label{sec:7}
\setcounter{equation}{0}
In this paper, we addressed the complete synchronization problem for the particle and kinetic Kuramoto models for identical oscillators with homogeneous and heterogenous inertia and friction. The Kuramoto model has been introduced as a mathematical model for the synchronization of the weakly coupled limit-cycle oscillators. Thus, in order to determine under what conditions on the system's parameters and initial conditions, whether does the models exhibit complete synchronization or not is one of fundamental issues from the modeling view point. In mathematical modeling, the effect of inertia is often studied by adding the second order term and when the inertial effect is negligible, it  
is often ignored simply assuming that the singular term containing the inertia is small. For the Kuramoto model, it is easy to see that the effect of inertia does not affect the structure of emergent phase-locked states. 
Thus, the effect of inertia can be seen only in the intermediate temporal regime. In this work,  we have presented three main results. 
First, we extended an admissible set of initial data leading to the complete synchronization to a larger generic set in a large coupling strength regime. Second, we provided a sufficient framework for complete synchronization 
under the effect of heterogeneous inertia and frictions. In some sense, our proposed framework is robust in inertia and friction variations. Finally, we considered the corresponding kinetic model which approximates the large ensemble of identical Kuramoto oscillators. In particular, we provided the global existence of measure-valued solutions and their asymptotic behaviors by lifting corresponding particle results via the rigorous
 uniform-in-time mean-field limit. All presented results deal with only the ensemble of identical oscillators. Thus, how to extend our results to the ensemble of non-identical oscillators will be a next interesting future work to be explored. 

%
%
%
%

\end{document}